\title[On regularity of reciprocals for exponential series]
{On the problem by Erd\H{o}s-de Bruijn-Kingman on regularity of reciprocals for exponential series}
\newcommand{\vanish}[1]{\relax}
\newcommand{\N}{\mathbb{N}}
\newcommand{\R}{\mathbb{R}}
\newcommand{\C}{\mathbb{C}}
\newcommand{\T}{\mathbb{T}}
\newcommand{\D}{\mathbb{D}}
\newcommand{\Sum}[2][\relax]{%
 \ifx#1\relax \sideset{}{_{#2}}\sum
 \else \sideset{}{^{#1}_{#2}}\sum
 \fi}
\newcommand{\norm}[2][\relax]{%
   %\ensuremath{\left\Vert{#2}\right\Vert_{#1}}}
   \ifx#1\relax \ensuremath{\left\Vert#2\right\Vert}
   \else \ensuremath{\left\Vert#2\right\Vert_{#1}}
   \fi}
\newcommand{\sprod}[2]{\ensuremath{%
  \setbox0=\hbox{\ensuremath{#2}}
  \dimen@\ht0
  \advance\dimen@ by \dp0
  \left(\left.#1\rule[-\dp0]{0pt}{\dimen@}\,\right|#2\hspace{1pt}\right)}}
\newcounter{aufzi}
\newcounter{aufzii}
\newcounter{aufziii}
 \newtheorem{thm}{Theorem}[section]
 \newtheorem{cor}[thm]{Corollary}
 \newtheorem{lemma}[thm]{Lemma}
 \newtheorem{prop}[thm]{Proposition}
 \theoremstyle{definition}
 \theoremstyle{remark}
 \newtheorem{rem}[thm]{Remark}
\newtheorem{example}[thm]{Example}
\newtheorem{remark}[thm]{Remark}
\numberwithin{equation}{section}
\begin{document}

\author{Alexander Gomilko}
\address{Faculty of Mathematics and Computer Science\\
Nicolaus Copernicus University\\
ul. Chopina 12/18\\
87-100 Toru\'n, Poland\\
and Institute of Telecommunications and Global \\
Information Space, National Academy of Sciences of Ukraine\\
Kiev, Ukraine
}

\email{gomilko@mat.umk.pl}

\author{Yuri Tomilov}
\address{Institute of Mathematics\\
Polish Academy of Sciences\\
\'Sniadeckich 8\\
00-956 Warsaw, Poland
}

\email{ytomilov@impan.pl}

%\thanks{This work was completed with the partial support of the NCN grant
%2014/13/B/ST1/03153}

\subjclass{Primary 42A32, 42A16, 60K05; Secondary 60E10, 60J10}

\keywords{Renewal sequences, generating functions, Fourier coefficients, trigonometrical series, absolute convergence}

%\date{\today}

\begin{abstract}
Motivated by applications to renewal theory,  Erd\H{o}s, de Bruijn and Kingman posed a problem on boundedness
of reciprocals $(1-z)/(1-F(z))$ in the unit disc for probability generating functions $F(z)$. It was solved by Ibragimov in $1975$ by constructing a counterexample.
In this paper, we  provide much stronger counterexamples showing that the problem does not
allow for a positive answer even under rather restrictive additional assumptions.
Moreover, we pursue a systematic study of $L^p$-integrabilty properties for the reciprocals.
In particular, we show that while the boundedness of $(1-z)/(1-F(z))$ fails in general,
the reciprocals do possess certain $L^p$-integrability properties under mild conditions  on $F$.
We also study the same circle of problems in the continuous-time setting.
\end{abstract}

\maketitle
% definition of \theta aperiodic, f(\theta)
\section{Introduction}
\subsection{Motivation}
The paper addresses several notorious  problems related to renewal sequences and their generating functions.
Recall that if  $(a_k)_{k=1}^{\infty}$ is such that $a_k \ge 0, k \ge 1,$ and $\sum_{k \ge 1} a_k=1,$
then the sequence $(b_k)_{k=0}^{\infty}$ given by the recurrence relation
\begin{equation}\label{recrel}
b_n=\sum_{k=1}^{n} a_k b_{n-k}, \qquad b_0=1, \qquad n \in \mathbb N,
\end{equation}
is called the renewal sequence associated to $(a_k)_{k=1}^{\infty}.$
Renewal sequences are a classical subject of studies in probability theory, in particular, in the theory of Markov processes.
To mention one of the probabilistic meanings of \eqref{recrel},
  let $\{X_n: n \ge 0\}$ be a recurrent Markov chain
 with the state space $\mathbb N \cup \{0\}$ and  $P(X_0 = 0) = 1$. If $T:=\inf \{n \ge 1:X_n=0\}$ denotes  the
time of first return to the origin, and $a_n:=P(T=n), n \in \mathbb N,$ 
then $b_n=P(X_n=0|X_0=0), n \in \mathbb N.$
%To mention one of the probabilistic meanings of \eqref{recrel}, note that given a discrete Markov chain, \eqref{recrel} expresses
% the diagonal transition probabilities $(b_k)_{k=0}^{\infty}$
%in terms of the recurrence-time probabilities $(a_k)_{k=1}^{\infty}$.
For a thorough discussion of probabilistic background for \eqref{recrel}, see e.g. \cite[Vol 1, Ch. XIII]{Feller} and \cite[Ch. 1]{Kingman}.

Moreover, renewal sequences are also of substantial interest  in ergodic theory. For applications in ergodic theory one may consult e.g.
the papers \cite{Aa}, \cite{Aa1} and \cite{MRL}, the book \cite{AaB} and the references therein.

It is often convenient to study $(a_k)_{k=1}^{\infty}$ and $(b_k)_{k=0}^{\infty}$ in terms of their generating functions $F$ and $G,$ given by
$$
F(z)=\sum_{k=1}^{\infty} a_k z^k \qquad \text{and} \qquad G(z)=\sum_{k=0}^{\infty} b_k z^k.
$$
The functions are defined on the open unit disc
$
\D:=\{z\in \C: |z|<1\}
$
and connected by the relation
$$
G(z)=\frac{1}{1-F(z)}.
$$

Being unable to give any account of the wide topic of renewal sequences we refer to the classical
sources such as for instance \cite{Kingman}, \cite{Kendall}, \cite{Spitzer}, and \cite{Feller}
(although the term ``renewal sequence'' for $(b_n)_{n=0}^{\infty}$ given by
\eqref{recrel} is used only in \cite{Kingman} and \cite{Kendall}).
\subsection{History}
One of the first and foundational results in theory of renewal sequences is the famous Erd\H{o}s-Feller-Pollard theorem. To recall it
we need to introduce certain notation.
Let $\mathcal{A}^+$ consist
of the power series of the form
\begin{equation}\label{Ber}
F(z)=\sum_{k=1}^\infty a_k z^k,\qquad a_k\ge 0,\qquad \sum_{k=1}^\infty a_k=1
\end{equation}
in  $\overline{\D}.$ It is a complete metric space with metric induced
by $\ell_1$-norm on an appropriate sequence space.
We say that $F \in \mathcal A^+$ is aperiodic if $F(z)=1, z \in  \overline{\D},$ implies that $z=1.$
Clearly, if $F$ is aperiodic then $1/(1-F)$ is analytic in $\mathbb D$ and continuous in ${\mathbb D}\setminus\{1\}.$

Using Wiener's theorem, it
 was proved in \cite{Erdos} that if $F\in \mathcal{A}^+$
is aperiodic and additionally
\begin{equation}\label{erdos}
\mu:=\sum_{k=1}^\infty k a_k<\infty,
\end{equation}
then
\begin{equation}\label{limit}
\lim_{k \to \infty} b_k=1/\mu.
\end{equation}
This is essentially the famous  Erd\H{o}s-Feller-Pollard theorem, one of the first and basic limit theorems in renewal theory.

The key point in \cite{Erdos} for showing  the property \eqref{limit} was the fact that the function
$(1-z)(1-F)^{-1}$
has absolutely convergent Taylor series:
\begin{equation}\label{BEQ}
\sum_{k=1}^{\infty} |b_k-b_{k+1}|<\infty.
\end{equation}

The theorem generated an area of research, and a huge number of its generalizations and improvements  in various directions
has appeared in subsequent years.
Analytic approaches to the study of $1/(1-F)$ and of asymptotics of $(b_k)_{k=1}^{\infty}$ are discussed e.g. in \cite[Chapter V.22]{Korevaar} and \cite[Chapter 24]{Postnikov}.
These books contain a number of related references. We mention here only the classical papers \cite{Stone} and \cite{Ericsson}.

However, certain natural questions have escaped a thorough study.
In particular, P. Erd\H{o}s and N. de Bruijn suggested in \cite[p. 164]{Bruijn}
that (\ref{BEQ})
is probably true for any aperiodic $F\in \mathcal{A}^+$ and the assumption \eqref{erdos} is redundant.
As they wrote in \cite{Bruijn}, ``it seems possible that the condition \eqref{erdos} is
superfluous''.
Moreover, the question whether \eqref{BEQ} holds for any aperiodic $F$ satisfying \eqref{Ber}
was formulated as an open problem by J. Kingman in \cite[p. 20-21, (iv)]{Kingman}.
A recent discussion of the problem in the context of ergodic theory  can be found in \cite{Aa1}.
The analysis of $(1-z)(1-F)^{-1}$ presents certain difficulties in view of nonlinear character
of the transformation $F \mapsto (1-F)^{-1}.$
While $(b_k)_{k=0}^{\infty}$ is given explicitly in terms of $(a_k)_{k=1}^{\infty},$ it is very difficult
to study it by means of the recurrence relation \eqref{recrel}
(see e.g. \cite{Bruijn1} and \cite{Bruijn} for such a direct approach).
So most of the research on analytic properties of renewal sequences concentrated on the generating-function methodology.

One must note that relevant studies have been made by  J. Littlewood
in \cite{Littl}, a paper apparently overlooked by mathematical community.
Being motivated by the enigmatic message from Besicovitch (see \cite[p. 145]{Littlewood}) and a question by W. L. Smith, Littlewood proved in \cite{Littl} that
for any function $f$ given by
\begin{equation}\label{lambda}
f(\theta)=\sum_{k=1}^\infty a_k e^{i\lambda_k\theta},\qquad \text{where} \,\, \lambda_k \in \mathbb R, \,\, \lambda_k\ge 1, \,\, \lim_{k\to\infty}\,\lambda_k=\infty,
\end{equation}
and $(a_k)_{k=1}^{\infty}$ as in \eqref{Ber},
one has
\begin{equation}\label{Lpart}
\overline{\lim_{\theta_0\to 0}}\,
\int_{\theta_0}^{2\theta_0}
\frac{d\theta}{|1-f(e^{i\theta})|}<\infty.
\end{equation}
(Sometimes  $f$ satisfying \eqref{lambda} are called quasi-exponential series.)
In particular, there is $\delta>0$ (depending on $f$) such that
\begin{equation}\label{aper}
\int_0^\delta
\frac{\theta^\alpha d\theta}{|1-f(e^{i\theta})|}<\infty
\end{equation}
for any $\alpha>0.$
Results of that type lead to a number of useful consequences in the study of regularity for generating functions of renewal sequences,
as we show in Section \ref{lp}.

It is natural to ask whether Littlewood's results can be essentially improved. For example, boundedness from below of
$ |1-f(e^{i\theta})|/\theta$ in the neighborhood of zero would imply
 \eqref{Lpart}. Littlewood's student H. T. Croft claimed in \cite{Croft} that the latter property does not hold, in general.
More precisely, if $f$ is defined by \eqref{lambda}, then for any function $\chi$
such that $\chi(\theta) \uparrow \infty$
as $\theta\to 0$
there exist sequences $(a_k)_{k=1}^{\infty}$ and
$(\lambda_k)_{k=1}^{\infty}$ as above, and
$(\theta_k)_{k=1}^{\infty}$
satisfying $\theta_k\to 0$ as $k\to\infty$ such that
\begin{equation}\label{CroftI}
|1-f(\theta_k)|\le \chi(\theta_k)\theta_k^2,\qquad
k\in\N.
\end{equation}
(In fact, only the case $\chi(\theta)=\theta^{-\epsilon}$ was discussed in \cite{Croft}.)
This, indirectly, would solve the Erd\H{o}s-de Bruijn-Kingman problem once one would arrange the integer frequencies $\lambda_k$ above, although Croft presumably was not aware of the problem.
However, \cite{Croft} contains only a hint rather than a complete argument, and it produces  merely  real frequencies $\lambda_k$ rather than integer ones as in \eqref{Ber}.

The  Erd\H{o}s-de Bruijn-Kingman problem was settled in the negative by I. A. Ibragimov  in \cite{Ibragimov},
who constructed  an $F$ such that $(1-z)/(1-F(z))$ is unbounded in $\D.$
However, the size of the gap between the Erd\H{o}s-Feller-Pollard condition \eqref{erdos}
and the situation with no a priori assumption, i.e. $\sum_{k\ge1} a_k=1,$  remained completely
unclear. In this paper, we show that \eqref{BEQ} does not in general hold under essentially any summability
assumptions weaker than \eqref{erdos},
and moreover \eqref{BEQ} fails for  generic probabilities $(a_k)_{k=1}^{\infty}.$

It is also instructive to remark that in \cite{Hawkes}
J. Hawkes  constructed a lacunary series of the form (\ref{lambda}) with $\theta_k=2^{-2^{k^2}}$ and $\lambda_k=\frac{2\pi-\theta_k^{2/3}}{\theta_k}, \, k\in \N,$ such that
\[
\lim_{k\to \infty}\,
\frac{|1-f(\theta_k)|}{\theta_k}=0.
\]
This way, Hawkes solved another Kingman problem formulated in \cite[p. 76]{Kingman}, which is similar (but not equivalent) to the problem mentioned above. However, the approaches of \cite{Hawkes} and \cite{Kingman} are quite close to each other.

\subsection{Results}

In this paper, we revisit the problem posed by Erd\H{o}s, de Bruijn and Kingman,
and provide counterexamples that can be considered as, in a sense, best possible.
Namely, by methods very different from those of \cite{Ibragimov},
we prove in
Theorem \ref{betakN}
that for any positive sequence
$(\epsilon_k)_{k=1}^{\infty}$ tending to zero
(subject to a technical assumption)
there exists an aperiodic
$F\in \mathcal{A}^+$ with
\begin{equation}\label{kbetak}
\sum_{k=1}^\infty k \epsilon_k  a_k<\infty,
\end{equation}
such that
$
(1-z)(1-F)^{-1}$
is not even bounded in $\mathbb D$,
and thus
(\ref{BEQ}) is not true. Moreover, the set of such $F$ is dense
in $ \mathcal A^+$ (when $\mathcal A^+$ is considered as a metric space with a natural metric).
Thus, the assumption \eqref{erdos} in the Erd\H{o}s-Feller-Pollard theorem is optimal
as far as the ``smoothness'' of $(b_k)_{k=0}^{\infty}$ is concerned.
Several results of a similar nature have been obtained as well.
At the same time, we show in Appendix B that Croft's idea can successfully be realized, and moreover it can also be realized for
integer frequencies.

Our technique is based on constructing special sequences of polynomials approximating a given polynomial well enough in an appropriate norm
and, as in \eqref{CroftI}, the constant function $1$ at a sequence of points from the unit circle converging to $1.$ By means of either Baire-category arguments or inductive reasoning, this then turns into the same estimates
for exponential series:
\[
f(\theta)=F(e^{i\theta}),\qquad |\theta|\le \pi,\qquad
F\in \mathcal{A}^+.
\]
It is crucial that the bounds  of the type \eqref{CroftI} can also be spread out to an appropriate sequence of intervals approaching  $1,$ and
thus hold on a set of sufficiently large
measure. These extended bounds generalize the upper estimates
from  \cite{Croft}, \cite{Ibragimov} and
\cite{Hawkes}, and they allow us
to get rid of a certain amount of regularity of $(1-F)^{-1},$ e.g. with respect to the $L^p$-scale.

By pursuing our studies a bit further, it is natural to ask what kind regularity is possessed by $(1-F)^{-1}$ \emph{without any a priori assumptions}
on the sequence of Taylor coefficients $(a_k)_{k=1}^{\infty}$ of $F.$
Despite the enormous number of papers on renewal sequences, the question seems to have not been adequately
addressed so far
(apart perhaps to some extent \cite{Aa}, \cite{Aa1} and \cite{Littl})
In the present paper, we make several steps in this direction.
First, we extend Littlewood's results (\ref{Lpart})
and (\ref{aper}) by relating the integrability of $(1-f)^{-1}$ on an interval $(\theta_0,2\theta_0)\subset (0,2)$ to the summability properties of the Taylor coefficients
of $F.$ This allows us to obtain sharp and explicit conditions for the integrability of  $(1-F)^{-1}$ on the unit circle $\mathbb T$ 
if $F$ is aperiodic.
Furthermore, we pursue a similar study for the ``smoothed'' function $(1-z)(I-F)^{-1}$ appearing in the Erd\H{o}s-de Brujin-Kingman problem.
We show that for $F$ as in \eqref{Ber}, satisfying
\begin{equation}\label{nu1}
\sum_{k=1}^\infty k^\nu a_k<\infty
\end{equation}
for some $\nu \in (0,1),$ one has
$$(1-z)(1-F)^{-1} \in L^{1+\frac{1}{1-\nu}}(\mathbb T).$$
On the other hand,
for each $p \in (2+(1-\nu)^{-1},\infty)$ we construct a function $F_p$ of the form \eqref{Ber} satisfying
\eqref{nu1} but at the same time violating
 $$(1-z)(1-F_p)^{-1} \not\in L^p(\mathbb T).$$

 Remark that while \eqref{BEQ} is not, in general, true for $F \in \mathcal A^+$ (as we show in this paper),
 we prove that nevertheless a weaker property holds:
 \begin{equation*}
\sum_{k=0}^{\infty} (b_k-b_{k+1})^2<\infty.
\end{equation*}
This simple result has probably been overlooked in the literature.
Moreover, we show that, in general,  $(1-z)(1-F)^{-1} \not \in L^p(\mathbb T)$
if $p \in (3,\infty).$ The problem what happens if  $p \in (2,3]$ remains, unfortunately, open.

We finish the paper with remarks on a continuous analogue of the Erd\H{o}s-de Bruijn-Kingman problem.

\section{Preliminaries and notations}\label{prelim}

For $ w=(w_k)_{k=1}^{\infty} \subset [1,\infty)$ we denote by
$\mathcal{A}(\omega)$
a Banach space
\[
\left\{f(\theta)=\sum_{k=1}^\infty
a_k e^{i k\theta}:\;
\; \sum_{k=1}^\infty w_k |a_k| <\infty,\quad  |\theta|\le \pi
\right\},
\]
with the norm
\[
\| f\|_{\mathcal A(w)}
=\sum_{k=1}^\infty w_k|a_k|, \qquad f \in \mathcal A(w).
\]
Its subset $\mathcal{A}^+(w)$
given by
\[
\mathcal{A}^+(w)=\left\{\sum_{k=1}^\infty
a_k e^{i k\theta} \in \mathcal {A}(\omega):\;
\;a_k \ge 0, \, \,\, \sum_{k=1}^\infty a_k=1
\right\}
\]
is a complete metric space with the metric $\rho(\cdot, \cdot)_{\mathcal {A}(w)}$ inherited from $\mathcal{A} (w).$

Note that
\begin{equation}\label{Ina2}
|f(\theta)-g(\theta)|\le \|f-g\|_{\mathcal A(w)},\qquad f,g\in \mathcal{A}^+(w),\qquad |\theta|\le \pi.
\end{equation}

We will often use a more intuitive notation
$\|f_1-f_2\|_{\mathcal A(w)}$ instead of $\rho(f_1, f_2)_{\mathcal A(w)}$ whenever it is defined correctly.
If $w_k = k^\nu, \nu \in [0,1),$ for $k \in \mathbb N,$ then we will write $\mathcal A(\nu)$ instead of $\mathcal A(w)$ slightly abusing our notation. The same will apply to the spaces $\mathcal A^+(w).$
We will also write $\mathcal A^+$ (respectively $\mathcal A$) instead of $\mathcal A^+(\{1\})$ (respectively $\mathcal A(\{1\})$).
Clearly, $\mathcal A$ is embedded contractively into any $A(w)$ with $w$ as above.
It will sometimes be convenient to consider functions from $A(w)$ extended periodically
to the whole real line.

In the sequel,  we identify absolutely convergent power series $F(z)=\sum_{k=1}^n a_k z^k$ on $\overline{\mathbb D}$ with their boundary values on $\mathbb T,$
and the boundary values with the corresponding $2\pi$-periodic functions $f,$ so that
\begin{equation}\label{IsForm}
f(\theta):=F(e^{i\theta})
=\sum_{k=1}^\infty a_k e^{ik\theta},\qquad |\theta|\le \pi.
\end{equation}
In this way, our notation $\mathcal A^+$ agrees with the same notation used in the introduction for power series.
A function $f \in \mathcal{A}^+$ of the form $f=\sum_{k=1}^n a_k e^{i k\theta}, n \in \N,$ will be called
(exponential) polynomial. As usual, its degree ${\rm deg}\, f$ is defined as ${\rm deg}\, f:=\max\{k\in \mathbb N: a_k \neq 0\}.$
Clearly the set of polynomials is contained in any $\mathcal A(w).$ 

Following the definition of aperiodic $F$ from the introduction, $f \in \mathcal{A}^+$ is said to be aperiodic if $f(\theta)=1$ for some $\theta \in [-\pi,\pi],$ implies $\theta=0.$
Let us recall that $f(\theta)=\sum_{k=1}^\infty
a_k e^{i k\theta} \in \mathcal{A}^+$ is aperiodic if and only if the greatest common divisor of $\{k \in \mathbb N: a_k > 0\}$ is $1.$
%see e.g. \cite[p. 85]{Spitzer} or \cite[Vol II, p. 500-501]{Feller}.
The argument for the ``only if'' part of this equivalence can be found e.g. in \cite[p. 12]{Kingman} or \cite[p. 272-273]{Korevaar},
while the other part is obvious. The equivalence, in particular, implies that 
 if $a_1\not=0,$  then 
%any $f\in \mathcal{A}^{+}$ of the form
%({\ref{IsForm})
%with $a_1\not=0,$ 
$f$ is aperiodic.
Moreover, if $f$ is not aperiodic, then there exists $\theta \in [\pi/2,\pi]$ such that  $f(\theta)=1.$
%Moreover,
%Recall from the introduction that $f \in \mathcal{A}^+$ is said to be aperiodic if $f(z)=1, z \in \mathbb \overline{D}$ implies $z=1.$

Observe that the set of polynomials in $\mathcal A^+(w)$ is dense in $\mathcal A^+(w)$ for any weight $w.$ Indeed,
let $f \in \mathcal A^+(w)$
be given by
\begin{equation}\label{formAP}
f(\theta)=\sum_{k=m}^\infty a_k e^{ik\theta},\qquad a_m\not=0.
\end{equation}
Let us define for $n\ge m+1$ the family
of aperiodic polynomials
\[
P_n(\theta)=\left(\frac{a_m}{n}e^{i\theta}+
a_m\left(1-\frac{1}{n}\right)e^{i m\theta}+
\sum_{k=m+1}^n a_k e^{i k\theta}\right)/d_n\in \mathcal{A}^{+},
\]
where
\[
d_n=\sum_{k=m}^n a_k \to 1,\quad n\to\infty.
\]
Then
\begin{align*}
\|f-P_n\|_{\mathcal A(w)}
\le& (1/d_n-1)\|f\|_{\mathcal A(w)}+d_n^{-1}\|f-d_nP_n\|_{\mathcal A(w)}\\
\le& (1/d_n-1)\|f\|_{\mathcal A(w)}+
\frac{(w_1+w_m) a_m}{n d_n}\\
+&\frac{1}{d_n}\sum_{k=n+1}^\infty w_k a_k\to 0,\quad n\to\infty.
\end{align*}

The next simple proposition will be useful for the sequel. It is probably known, but we were  not able
to find an appropriate reference.

\begin{prop}\label{Open}
The set of aperiodic functions in  $\mathcal A^{+}(w)$ is open
in $\mathcal A^{+}(w)$.
\end{prop}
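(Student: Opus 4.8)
The plan is to reduce everything to the combinatorial description of aperiodicity recalled just above the statement: $f=\sum_{k\geq1}a_k e^{ik\theta}\in\mathcal{A}^+(w)$ is aperiodic exactly when $\gcd\{n\in\mathbb{N}:a_n>0\}=1$. The reason for working with this description rather than the analytic one (``$f(z)=1$ on $\overline{\D}$ forces $z=1$'') is that the $\gcd$ condition is visibly stable under small perturbations --- it only asks that finitely many prescribed coefficients stay positive --- whereas perturbing the zeros of $1-f$ on $\mathbb{T}$ is awkward to control directly. I will also use that on $\mathcal{A}(w)$ each coefficient functional $f\mapsto a_k$ is bounded, indeed $|a_k|\leq w_k|a_k|\leq\|f\|_{\mathcal{A}(w)}$ since $w_k>1$.

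Given an aperiodic $f=\sum_{k\geq1}a_k e^{ik\theta}\in\mathcal{A}^+(w)$, I would first pick finitely many indices $k_1,\dots,k_r$ with $a_{k_j}>0$ for all $j$ and $\gcd(k_1,\dots,k_r)=1$ (possible because the full support already has $\gcd=1$), and put $c:=\min_{1\leq j\leq r}a_{k_j}>0$. Then for any $g=\sum_{k\geq1}b_k e^{ik\theta}\in\mathcal{A}^+(w)$ with $\|f-g\|_{\mathcal{A}(w)}<c$, the coefficientwise bound $|a_k-b_k|\leq\|f-g\|_{\mathcal{A}(w)}<c$ (again because $w_k>1$) gives $b_{k_j}>a_{k_j}-c\geq0$, hence $b_{k_j}>0$ for every $j$; therefore $\gcd\{n:b_n>0\}\mid\gcd(k_1,\dots,k_r)=1$ and $g$ is aperiodic. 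Thus the $\|\cdot\|_{\mathcal{A}(w)}$-ball of radius $c$ about $f$ consists of aperiodic functions, which is the assertion.

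I do not expect a real obstacle; the only things to be careful about are that the metric on $\mathcal{A}^+(w)$ is precisely the one induced by $\|\cdot\|_{\mathcal{A}(w)}$, so the coefficientwise estimate is legitimate, and that the strict inequality $\|f-g\|_{\mathcal{A}(w)}<c$ must be kept strict so that $b_{k_j}>a_{k_j}-c\geq0$ is genuine even when $a_{k_j}=c$. Together with the density of aperiodic polynomials established just above, this proposition will yield that the aperiodic elements form a dense open subset of $\mathcal{A}^+(w)$ --- precisely the input that the Baire-category arguments announced in the introduction require.
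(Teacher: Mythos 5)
Your argument is correct, but it is not the route the paper takes. You work with the arithmetic characterization of aperiodicity ($\gcd\{n:a_n>0\}=1$, recalled in Section 2), select finitely many support indices $k_1,\dots,k_r$ with $\gcd(k_1,\dots,k_r)=1$ (legitimate, since the gcd of a set of positive integers is already attained on a finite subset), and use that each coefficient functional is $1$-Lipschitz for $\|\cdot\|_{\mathcal A(w)}$ because $w_k>1$; this yields an explicit ball of radius $c=\min_j a_{k_j}$ consisting entirely of aperiodic functions, and your care about the strict inequality $b_{k_j}>a_{k_j}-c\ge 0$ is exactly right. The paper instead shows the complement is closed: if non-aperiodic $f_n\to f_0$, then each $f_n$ takes the value $1$ at some $\theta_n\in[\pi/2,\pi]$ (the set where a non-aperiodic element equals $1$ is a nontrivial finite subgroup of the circle, so it meets that interval), and by compactness of $[\pi/2,\pi]$ together with the uniform bound \eqref{Ina2} any limit point $\theta_0\in[\pi/2,\pi]$ satisfies $f_0(\theta_0)=1$, so $f_0$ is not aperiodic. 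Your proof buys an explicit, quantitative radius and avoids any compactness or limit-point argument, at the cost of invoking the gcd description; the paper's proof stays closer to the analytic definition of aperiodicity (values of $f$ on the circle) and fits the sequential style used elsewhere, though it tacitly uses the same arithmetic structure to place $\theta_n$ in $[\pi/2,\pi]$. Either way, combined with the density of aperiodic polynomials established just before, one gets the residuality statement of Remark \ref{aperf}.
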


\begin{proof}
Let $(f_n)_{n=1}^{\infty}\subset \mathcal A^{+}(w)$ be a sequence of non-aperiodic functions
such that
\begin{equation}\label{limAp}
\lim_{n\to\infty}\,\|f_0-f_n\|_{\mathcal A(w)}=0
\end{equation}
for some $f_0 \in A^{+}(w).$ 
%Since f_n(\theta)=\sum_{k=1}^\infty
%a_{n, k} e^{i k\theta} is aperiodic, the greatest common divsor 
Note that for every $n\in \mathbb \N$ there exists $\theta_n\in [\pi/2,\pi]$ such that
$
f_n(\theta_n)=1.
$
If $\theta_0$ is any limit point of $(\theta_{n})_{n=1}^{\infty},$ then
$\theta_0\in [\pi/2,\pi],$ and from (\ref{limAp}),
(\ref{Ina2}) and the continuity of $f_0$ it follows that $f_0(\theta_0)=1$.
Therefore,  $f_0$ is not aperiodic, and the set of non-aperiodic functions is closed in $\mathcal A^{+}(w)$.
\end{proof}

\begin{rem}\label{aperf}
By Proposition \ref{Open} the set of aperiodic functions
in $\mathcal A^{+}(w)$ is open in $\mathcal A^{+}(w).$
Since that set is also dense in $\mathcal A^{+}(w)$ as we showed above,
the set of aperiodic functions in
 $\mathcal A^{+}(w)$ is residual, i.e. it is the complement of a set of first category in $\mathcal A^{+}(w).$
\end{rem}

Finally, we will fix some standard notation for the rest of the paper. For any measurable set $E \subset \mathbb R$ (or $ E \subset \mathbb T$) we let ${\rm meas} \, (E)$ stand for its Lebesgue measure. The usual max norm in the space of $2\pi$-periodic continuous functions on $[-\pi, \pi]$ will be denoted by $\|\cdot\|_{\infty}.$
%For an exponential polynomial $P(\theta)=\sum_{k=1}^n a_k e^{i k\theta}$ define as usual its degree ${\rm deg}\, P$ by ${\rm deg}\, %P:=\max\{k\in \mathbb N: a_k \neq 0\}.$
Sometimes, to simplify the exposition, the constants will change from line to line, although in several places we will give
the precise values of constants to underline their (in)dependence on parameters.

\section{Auxiliary estimates for the exponential polynomials}\label{aux}

In this section, we first obtain lower estimates for the size of approximations of the constant function $1$ by exponential polynomials.
Then in the next section these estimates will be extended to exponential series by either Baire-category arguments
or inductive constructions.

We start with the following technical lemma.

\begin{lemma}\label{cosAr}
For $\lambda, \gamma\in (0,1],$ define
\[
d_{\lambda,\gamma}=\frac{\sin(\lambda/2)\cos((\gamma+\lambda)/2)}
{\sin(\gamma/2)}.
\]
Then
\begin{equation}\label{bdA}
\frac{\lambda}{4\gamma}\le d_{\lambda,\gamma}\le
\frac{\lambda}{\gamma} \qquad \text{and} \qquad
|(1-e^{i \lambda})+d_{\lambda,\gamma}(1-e^{-i\gamma})|
\le \frac{\lambda(\gamma+\lambda)}{2}.
\end{equation}
\end{lemma}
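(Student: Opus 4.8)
The plan is to verify the two parts of \eqref{bdA} by elementary trigonometric manipulations. For the bounds on $d_{\lambda,\gamma}$, I would first rewrite $\cos((\gamma+\lambda)/2)$ and use that $\lambda,\gamma\in(0,1]$ forces all the half-angles $\lambda/2,\gamma/2,(\gamma+\lambda)/2$ to lie in $(0,1]$, hence inside $(0,\pi/2)$. For the upper bound $d_{\lambda,\gamma}\le\lambda/\gamma$, I would use $\sin(\lambda/2)\le\lambda/2$, $\cos((\gamma+\lambda)/2)\le 1$, and $\sin(\gamma/2)\ge(\gamma/2)\cdot\frac{2}{\pi}$... actually it is cleaner to use the concavity bound $\sin x\ge x\cos(1/2)$? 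No — the sharpest elementary route is: $\sin(\gamma/2)\ge (\gamma/2)\cos(\gamma/2)$ is false; instead use $\sin t\ge \tfrac{2}{\pi}t$ on $[0,\pi/2]$, which gives $\sin(\gamma/2)\ge\gamma/\pi$, not good enough for the constant $4$. The right approach for the lower bound $d_{\lambda,\gamma}\ge\lambda/(4\gamma)$ is: $\sin(\lambda/2)\ge(\lambda/2)\cdot\frac{\sin(1/2)}{1/2}=\lambda\sin(1/2)\ge\lambda\cdot 0.479$, $\cos((\gamma+\lambda)/2)\ge\cos(1)\ge 0.540$, and $\sin(\gamma/2)\le\gamma/2$, so $d_{\lambda,\gamma}\ge\frac{\lambda\sin(1/2)\cos(1)}{\gamma/2}=\frac{2\sin(1/2)\cos(1)\,\lambda}{\gamma}\ge\frac{\lambda}{4\gamma}$ since $2\sin(1/2)\cos(1)\approx 0.517\ge 1/4$. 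For the upper bound, $\sin(\lambda/2)\le\lambda/2$, $\cos((\gamma+\lambda)/2)\le 1$, and $\sin(\gamma/2)\ge(\gamma/2)\cos(\gamma/2)\ge(\gamma/2)\cos(1/2)$; then $d_{\lambda,\gamma}\le\frac{\lambda/2}{(\gamma/2)\cos(1/2)}=\frac{\lambda}{\gamma\cos(1/2)}$, but $\cos(1/2)\approx 0.878<1$, so this only gives $\lambda/(\gamma\cos(1/2))$, slightly too big. Instead use the tangent inequality: on $(0,\pi/2)$, $\frac{\sin a}{\sin b}\le\frac{a}{b}$ whenever $0<b\le a$ — but here $a=\lambda/2$ need not exceed $b=\gamma/2$. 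The cleanest: $\frac{\sin(\lambda/2)}{\sin(\gamma/2)}\le\frac{\tan(\lambda/2)}{\tan(\gamma/2)}\cdot\frac{\cos(\lambda/2)}{\cos(\gamma/2)}$ — messy. I would instead just note $\sin(\lambda/2)\cos((\gamma+\lambda)/2)=\tfrac12(\sin(\lambda/2-\gamma/2 - \text{...}))$; see the next paragraph.

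A slicker unified route, which I would actually adopt, is the product-to-sum identity
\[
2\sin(\lambda/2)\cos((\gamma+\lambda)/2)=\sin(\lambda-\gamma/2\cdot 0+\dots)=\sin\!\Big(\lambda/2+(\gamma+\lambda)/2\Big)-\sin\!\Big((\gamma+\lambda)/2-\lambda/2\Big)=\sin((\gamma)/2+\lambda)-\sin(\gamma/2)\, ,
\]
wait, recompute: $2\sin A\cos B=\sin(A+B)+\sin(A-B)$ with $A=\lambda/2$, $B=(\gamma+\lambda)/2$ gives $\sin(\lambda/2+(\gamma+\lambda)/2)+\sin(\lambda/2-(\gamma+\lambda)/2)=\sin(\gamma/2+\lambda)-\sin(\gamma/2)$. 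Hence
\[
d_{\lambda,\gamma}=\frac{\sin(\gamma/2+\lambda)-\sin(\gamma/2)}{2\sin(\gamma/2)}\,.
\]
Now by the mean value theorem $\sin(\gamma/2+\lambda)-\sin(\gamma/2)=\lambda\cos(\xi)$ for some $\xi\in(\gamma/2,\gamma/2+\lambda)\subset(0,\pi/2)$ since $\gamma/2+\lambda\le 1/2+1<\pi/2$. Therefore $d_{\lambda,\gamma}=\frac{\lambda\cos\xi}{2\sin(\gamma/2)}$ with $\cos(1)\le\cos\xi\le 1$, and using $\gamma/2\le\sin(\gamma/2)\cdot(\gamma/2)/\sin(\gamma/2)$... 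I mean: $\tfrac{2}{\pi}(\gamma/2)\le\sin(\gamma/2)\le\gamma/2$. The upper bound $\sin(\gamma/2)\le\gamma/2$ yields $d_{\lambda,\gamma}\ge\frac{\lambda\cos(1)}{\gamma}\ge\frac{\lambda}{4\gamma}$ since $\cos(1)>1/4$. The lower bound on $\sin$ via $\sin(\gamma/2)\ge\tfrac{2}{\pi}(\gamma/2)=\gamma/\pi$ gives $d_{\lambda,\gamma}\le\frac{\lambda\pi}{2\gamma}$, which has the wrong constant. To get $d_{\lambda,\gamma}\le\lambda/\gamma$, I instead use concavity of $\sin$ on $[0,\pi]$: the chord from $0$ to $\gamma/2+\lambda$ lies below the graph, so $\sin(\gamma/2)\ge\frac{\gamma/2}{\gamma/2+\lambda}\sin(\gamma/2+\lambda)$; then $d_{\lambda,\gamma}=\frac{\sin(\gamma/2+\lambda)-\sin(\gamma/2)}{2\sin(\gamma/2)}\le\frac{\sin(\gamma/2+\lambda)}{2\sin(\gamma/2)}-\tfrac12\le\frac{\gamma/2+\lambda}{2(\gamma/2)}-\tfrac12=\frac{\lambda}{\gamma}$, as required.

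For the second inequality in \eqref{bdA}, write $z:=(1-e^{i\lambda})+d_{\lambda,\gamma}(1-e^{-i\gamma})$. I would expand $1-e^{i\lambda}=-2\ui\sin(\lambda/2)e^{i\lambda/2}$ and $1-e^{-i\gamma}=2\ui\sin(\gamma/2)e^{-i\gamma/2}$, so that
\[
z=-2\ui\sin(\lambda/2)e^{\ui\lambda/2}+2\ui\,d_{\lambda,\gamma}\sin(\gamma/2)e^{-\ui\gamma/2}
=-2\ui\sin(\lambda/2)\Big(e^{\ui\lambda/2}-e^{-\ui\gamma/2}\cdot\tfrac{d_{\lambda,\gamma}\sin(\gamma/2)}{\sin(\lambda/2)}\Big).
\]
By the definition of $d_{\lambda,\gamma}$, $\tfrac{d_{\lambda,\gamma}\sin(\gamma/2)}{\sin(\lambda/2)}=\cos((\gamma+\lambda)/2)$, hence the bracket equals $e^{\ui\lambda/2}-\cos((\gamma+\lambda)/2)e^{-\ui\gamma/2}$. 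Thus $\abs{z}=2\sin(\lambda/2)\,\bigl|e^{\ui\lambda/2}-\cos((\gamma+\lambda)/2)e^{-\ui\gamma/2}\bigr|$, and it remains to bound the last modulus by $(\gamma+\lambda)/2$. Writing $\alpha:=\lambda/2$, $\beta:=\gamma/2$, one computes $e^{\ui\alpha}-\cos(\alpha+\beta)e^{-\ui\beta}=e^{-\ui\beta}\bigl(e^{\ui(\alpha+\beta)}-\cos(\alpha+\beta)\bigr)=e^{-\ui\beta}\cdot\ui\sin(\alpha+\beta)$, so the modulus is exactly $\abs{\sin(\alpha+\beta)}=\abs{\sin((\gamma+\lambda)/2)}\le(\gamma+\lambda)/2$. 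Multiplying through, $\abs{z}=2\sin(\lambda/2)\abs{\sin((\gamma+\lambda)/2)}\le 2\cdot(\lambda/2)\cdot(\gamma+\lambda)/2=\lambda(\gamma+\lambda)/2$, which is the claim.

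The only mildly delicate point is pinning down the right elementary inequalities to land the constants $1/4$ and $1$ exactly; I expect the chord/concavity estimate $\sin(\gamma/2)\ge\frac{\gamma/2}{\gamma/2+\lambda}\sin(\gamma/2+\lambda)$ to be the cleanest device for the upper bound on $d_{\lambda,\gamma}$, while everything else reduces to $\sin t\le t$, $\sin t\ge\frac{2}{\pi}t$ or $\cos t\ge\cos 1$ on the relevant range, together with the product-to-sum identity that collapses $d_{\lambda,\gamma}$ to a single sine difference. No genuine obstacle is anticipated.
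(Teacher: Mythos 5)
Your proof of the second inequality is correct and takes a genuinely different route from the paper: you factor $1-e^{i\lambda}=-2i\sin(\lambda/2)e^{i\lambda/2}$ and $1-e^{-i\gamma}=2i\sin(\gamma/2)e^{-i\gamma/2}$, use the definition of $d_{\lambda,\gamma}$ to collapse the bracket to $e^{-i\gamma/2}\,i\sin((\gamma+\lambda)/2)$, and land on the exact identity $|(1-e^{i\lambda})+d_{\lambda,\gamma}(1-e^{-i\gamma})|=2\sin(\lambda/2)\sin((\gamma+\lambda)/2)$. The paper obtains the very same identity by treating $|(1-e^{i\lambda})+d(1-e^{-i\gamma})|^2$ as a quadratic in $d$ and completing the square, $d_{\lambda,\gamma}=B_{\lambda,\gamma}/A_\gamma$ being the minimizer; your factorization is shorter and makes it transparent why $d_{\lambda,\gamma}$ is the right coefficient. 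Your upper bound $d_{\lambda,\gamma}\le\lambda/\gamma$, via the product-to-sum identity $2\sin(\lambda/2)\cos((\gamma+\lambda)/2)=\sin(\gamma/2+\lambda)-\sin(\gamma/2)$ together with the chord (concavity) estimate $\sin(\gamma/2)\ge\frac{\gamma/2}{\gamma/2+\lambda}\sin(\gamma/2+\lambda)$, is also correct, and is in fact more careful than the paper's one-line chain: the intermediate assertion $d_{\lambda,\gamma}\le\pi\lambda/(4\gamma)$ made there fails for $\lambda=\gamma\to0$, where $d_{\lambda,\gamma}=\cos\lambda\to1>\pi/4$, so your chord argument is the honest way to get the constant $1$.

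The one genuine slip is in the lower bound of the ``unified route'' you say you would adopt: after the mean value theorem you claim $\cos(1)\le\cos\xi$, but $\xi\in(\gamma/2,\gamma/2+\lambda)$ and $\gamma/2+\lambda$ can be as large as $3/2$, so all you can guarantee is $\cos\xi\ge\cos(3/2)\approx 0.07$, which does not yield the constant $\tfrac14$. (The containment in $(0,\pi/2)$ you invoke is fine, but it only gives positivity of $\cos\xi$.) This is harmless because your first paragraph already contains a correct, self-contained proof of the lower bound: $\sin(\lambda/2)\ge\lambda\sin(1/2)$ by monotonicity of $\sin t/t$, $\cos((\gamma+\lambda)/2)\ge\cos 1$ (here the argument really is at most $1$), and $\sin(\gamma/2)\le\gamma/2$, giving $d_{\lambda,\gamma}\ge 2\sin(1/2)\cos(1)\,\lambda/\gamma\ge\lambda/(4\gamma)$. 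Replace the MVT step by that direct estimate (or, as the paper implicitly does, by $\sin t\ge 2t/\pi$ and $\cos((\gamma+\lambda)/2)\ge\tfrac12$ since $(\gamma+\lambda)/2<\pi/3$, giving $d_{\lambda,\gamma}\ge\lambda/(\pi\gamma)$), and the proof is complete.
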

Since the proof of Lemma \ref{cosAr} is based on simple computations with trigonometrical functions,
 it will be postponed to Appendix A.

The next corollary gives a recipe for constructing polynomials (having, in general, non-integer frequencies) with control
of their size at a fixed point and of their variation on the unit circle.

\begin{cor}\label{numbersP}
Let  $P(\theta)=\sum_{k=1}^n a_k e^{i k\theta}
\in \mathcal A^+.$
For all  $\theta_0\in (0,1/n]$
and $\gamma\in (0,1],$
there exists
$d \in \left[\frac{\theta_0}{4\gamma}, \frac{n \theta_0}{\gamma}\right] $  such that
if
\begin{equation}\label{num4}
P_{d,\gamma}(\theta):=\sum_{k=1}^n \frac{a_k}{1+d} e^{i k \theta}
+ \frac{d}{1+d} e^{i(2\pi-\gamma)\frac{\theta}{\theta_0}},
\end{equation}
then
\begin{equation}\label{num1}
|1- P_{d,\gamma}(\theta_0)|\le 2n\theta_0\gamma
\qquad \text{and} \qquad
\|P_{d,\gamma}'\|_{\infty}\le
n\left(1+\frac{2\pi}{\gamma}\right).
\end{equation}
\end{cor}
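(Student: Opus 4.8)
The plan is to apply Lemma~\ref{cosAr} with a suitable choice of parameters, taking advantage of the fact that replacing $e^{ik\theta}$ by $e^{i(2\pi-\gamma)\theta/\theta_0}$ amounts to a frequency scaling that converts $1-e^{-i\gamma}$ at the rescaled point into a genuine exponential. Concretely, I would set $\lambda := n\theta_0 \in (0,1]$ (using the hypothesis $\theta_0 \in (0,1/n]$) and keep $\gamma \in (0,1]$ as given, and let $d_{\lambda,\gamma}$ be the quantity from Lemma~\ref{cosAr}. The idea is that $P(\theta_0)$ is close to $1$ only to first order, and the correction term $\frac{d}{1+d}e^{i(2\pi-\gamma)\theta/\theta_0}$ evaluated at $\theta=\theta_0$ equals $\frac{d}{1+d}e^{-i\gamma}$; so the combination $(1-P_{d,\gamma}(\theta_0))$ will, after clearing the denominator $1+d$, be comparable to $(1-P(\theta_0)) + d(1-e^{-i\gamma})$, and I want to choose $d$ so that this matches the bounded quantity $|(1-e^{i\lambda}) + d_{\lambda,\gamma}(1-e^{-i\gamma})|$ controlled by the second inequality in \eqref{bdA}.

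The first main step is to record that since $P \in \mathcal{A}^+$ with frequencies $1,\dots,n$ and $\sum a_k = 1$, we have $1 - P(\theta_0) = \sum_{k=1}^n a_k(1-e^{ik\theta_0})$, and each $1-e^{ik\theta_0}$ has modulus at most $k\theta_0 \le n\theta_0 = \lambda$; a slightly more careful estimate, comparing $\sum a_k(1-e^{ik\theta_0})$ with $1-e^{i\lambda}$ directly, or simply bounding $|1-P(\theta_0)| \le \lambda$ and $|1-e^{i\lambda}| \le \lambda$, lets me control the ``error'' between $1-P(\theta_0)$ and $1-e^{i\lambda}$ by a term of size $O(\lambda^2)$. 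Then with $d := d_{\lambda,\gamma}$, I compute
\[
(1+d)\bigl(1 - P_{d,\gamma}(\theta_0)\bigr) = (1-P(\theta_0)) + d(1-e^{-i\gamma}),
\]
and estimate the right-hand side by $|(1-e^{i\lambda}) + d(1-e^{-i\gamma})| + |P(\theta_0) - e^{i\lambda}| \le \frac{\lambda(\gamma+\lambda)}{2} + (\text{error term})$. Dividing by $1+d \ge 1$ and bookkeeping the constants, using $\gamma+\lambda \le 2$ and $\lambda = n\theta_0$, should give $|1-P_{d,\gamma}(\theta_0)| \le 2n\theta_0\gamma$; here the left inequality $d \ge \lambda/(4\gamma) = n\theta_0/(4\gamma)$ in \eqref{bdA}, hence $d \ge \theta_0/(4\gamma)$, and the right inequality $d \le \lambda/\gamma = n\theta_0/\gamma$ give exactly the claimed membership $d \in [\theta_0/(4\gamma), n\theta_0/\gamma]$.

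For the derivative bound, I differentiate \eqref{num4} term by term: the polynomial part contributes $\sum_{k=1}^n \frac{a_k}{1+d} \cdot ik e^{ik\theta}$, of sup-norm at most $\frac{1}{1+d}\sum_{k=1}^n k a_k \le n \sum a_k = n$ (using $1+d \ge 1$ and $k \le n$), while the extra term contributes $\frac{d}{1+d} \cdot \frac{2\pi-\gamma}{\theta_0} e^{i(2\pi-\gamma)\theta/\theta_0}$, of sup-norm $\frac{d}{1+d}\cdot\frac{2\pi-\gamma}{\theta_0} \le \frac{d}{\theta_0}\cdot\frac{2\pi}{\theta_0}$; no, more carefully, $\frac{d}{1+d} \le d \le n\theta_0/\gamma$, so this term is at most $\frac{n\theta_0}{\gamma}\cdot\frac{2\pi}{\theta_0} = \frac{2\pi n}{\gamma}$. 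Adding the two contributions yields $\|P_{d,\gamma}'\|_\infty \le n + \frac{2\pi n}{\gamma} = n(1 + 2\pi/\gamma)$, as required.

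The only genuinely delicate point is tracking the constant in the first estimate: one must make sure that the $O(\lambda^2)$ discrepancy between $1-P(\theta_0)$ and $1-e^{i\lambda}$, together with the $\frac{\lambda(\gamma+\lambda)}{2}$ term and the division by $1+d$, really closes up to $\le 2n\theta_0\gamma$ rather than merely $\le C n\theta_0\gamma$ for some larger $C$, or $\le Cn^2\theta_0^2$. The worst case is $\gamma$ small relative to $\lambda$, where $d$ is large and the division by $1+d$ helps; the case $\gamma$ comparable to $\lambda$ needs the $\lambda(\gamma+\lambda)/2 \le \lambda\gamma \cdot (1 + \lambda/\gamma)$ type bookkeeping. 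I expect this constant-chasing, rather than any conceptual issue, to be the main obstacle, and it is exactly the kind of computation the authors defer; I would carry it out by writing $\lambda = n\theta_0$ throughout, using $\lambda,\gamma \le 1$ freely, and being generous since a factor of $2$ is built into the target.
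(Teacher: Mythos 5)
There is a genuine gap, and it is not just constant-chasing: your choice $d=d_{\lambda,\gamma}$ with the single value $\lambda=n\theta_0$ does not work, and the claimed $O(\lambda^2)$ control of the discrepancy between $1-P(\theta_0)$ and $1-e^{i\lambda}$ is false. Indeed $|P(\theta_0)-e^{in\theta_0}|=\bigl|\sum_{k=1}^n a_k(e^{ik\theta_0}-e^{in\theta_0})\bigr|$ is in general of size $\theta_0\bigl(n-\sum_k k a_k\bigr)$, i.e.\ of order $\lambda=n\theta_0$ when the mass of $P$ sits at low frequencies. The structural point is that the cancellation in Lemma \ref{cosAr} is a cancellation of imaginary parts: $\mathrm{Im}\bigl((1-P(\theta_0))+d(1-e^{-i\gamma})\bigr)\approx d\sin\gamma-\sum_k a_k\sin(k\theta_0)$, so $d$ must be tuned to the \emph{first moment} $\theta_0\sum_k k a_k$ of $P$, not to the top frequency $n\theta_0$. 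A concrete failure: take $P(\theta)=e^{i\theta}$ (so $a_1=1$), $n=100$, $\theta_0=10^{-4}$, $\gamma=10^{-2}$. Then your $d=d_{n\theta_0,\gamma}=\cos(0.01)\approx 1$, and $\mathrm{Im}\bigl((1-e^{i\theta_0})+d(1-e^{-i\gamma})\bigr)\approx -10^{-4}+\sin(0.01)\approx 0.0099$, whence $|1-P_{d,\gamma}(\theta_0)|\ge 0.0099/(1+d)\approx 5\cdot 10^{-3}$, while the target bound is $2n\theta_0\gamma=2\cdot 10^{-4}$. So with your $d$ the first inequality in \eqref{num1} fails by a factor of about $25$; no bookkeeping with $\lambda,\gamma\le 1$ can repair this, because the mismatch $\theta_0(n-\sum_k ka_k)$ carries no factor of $\gamma$ and survives the division by $1+d$.

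The paper avoids this by applying Lemma \ref{cosAr} once for \emph{each} frequency, with $\lambda=k\theta_0\le n\theta_0\le 1$, and setting $d:=\sum_{k=1}^n a_k\,d_{k\theta_0,\gamma}$, the $a_k$-weighted average. Then $(1+d)\bigl(1-P_{d,\gamma}(\theta_0)\bigr)=\sum_k a_k\bigl[(1-e^{ik\theta_0})+d_{k\theta_0,\gamma}(1-e^{-i\gamma})\bigr]$ splits termwise, each term is at most $\tfrac{1}{2}k\theta_0(\gamma+k\theta_0)$ by \eqref{bdA}, and the division by $1+d$ closes precisely because the lower bound in \eqref{bdA} gives $d\ge\frac{1}{4\gamma}\sum_k k a_k\theta_0$, so that $\sum_k a_k\bigl(1+\tfrac{k\theta_0}{\gamma}\bigr)\le 4(1+d)$; the same bounds give $d\in\bigl[\tfrac{\theta_0}{4\gamma},\tfrac{n\theta_0}{\gamma}\bigr]$ since $1\le\sum_k k a_k\le n$. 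Your derivative estimate, on the other hand, is correct and is essentially the paper's argument; only the construction and analysis of $d$ in the first inequality needs to be replaced by the weighted-average choice.
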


\begin{proof}
Let
$\theta_0\in (0,1/n],$ and $\gamma \in (0,1]$ be fixed.
Set
\[
d:=\sum_{k=1}^n a_k d_{k\theta_0,\gamma},
\]
where $d_{k\theta_0,\gamma}, 1 \le k \le n,$ are given by Lemma \ref{cosAr}.
Then, by Lemma \ref{cosAr},
\begin{equation}\label{num3}
\frac{\theta_0}{4\gamma}\le\frac{1}{4\gamma}\sum_{k=1}^n k a_k  \theta_0\le d\le
\frac{1}{\gamma}\sum_{k=1}^n k a_k  \theta_0\le\frac{n \theta_0}{\gamma}.
\end{equation}

Note that
\[
(1+d)(1-P_{d,\gamma}(\theta_0))
=\sum_{k=1}^n a_k [(1-e^{i k \theta_0})+
d_{k\theta_0,\gamma}(1-e^{-i\gamma})].
\]
So using (\ref{bdA}) and \eqref{num3}, we obtain that
\begin{align*}
(1+d)|1-P_{d,\gamma}(\theta_0)|\le&
\sum_{k=1}^n a_k |(1-e^{i k \theta_0})+
d_{k\theta_0,\gamma}(1-e^{-i\gamma})|\\
\le&
\frac{1}{2}\sum_{k=1}^n k a_k\theta_0(\gamma+ k \theta_0)\\
\le&\frac{n \theta_0 \gamma}{2}\sum_{k=1}^n a_k
\left(1+\frac{k \theta_0}{\gamma}\right)\\
\le& 2n\theta_0\gamma (1+d),
\end{align*}
hence the first estimate
in (\ref{num1}) holds.

Finally, by (\ref{num3}),
\[
\|P_{d,\gamma}'\|_{\infty}\le \sum_{k=1}^n k a_k+
d\frac{2\pi-\gamma}{\theta_0}
\le  n+
\frac{2\pi n\theta_0}{\theta_0\gamma}=
n\left(1+\frac{2\pi}{\gamma}\right),
\]
i.e. the second  estimate
in (\ref{num1}) is true.
\end{proof}

Now we are able to show that for any polynomial from $\mathcal A^+$
there is another polynomial close to it in an appropriate weighted norm
and close to the function $1$ on a sequence $(e^{i\theta_m})_{m=1}^{\infty} \subset \T$ going to $1 \in \T.$
\begin{thm}\label{betaT}
Let
$(\epsilon_k)_{k=1}^{\infty}$ be a positive
sequence such that
\[
\underline{\lim}_{k\to\infty}\,\epsilon_k=0 \qquad \text{and} \qquad k \epsilon_k \ge 1, \quad k \in \mathbb N,
\]
and let $\tilde w =(k\epsilon_k)_{k=1}^{\infty}.$
Then for every polynomial $P(\theta)=\sum_{k=1}^n a_k e^{i k\theta}\in  \mathcal{A}^+$
there exist
a  sequence
$(\theta_m)_{m=1}^{\infty} \subset (0,1/n]$ decreasing to zero
and a sequence of polynomials
 $(Q_m)_{m=1}^{\infty} \subset \mathcal{A}^+$
satisfying
\[
\lim_{m\to\infty}\,\frac{|1-Q_m(\theta_m)|}{\theta_m}=0 \qquad \text{and} \qquad
\lim_{m\to\infty}\,\|P-Q_m\|_{\mathcal A(\tilde {w})}=0.
\]
\end{thm}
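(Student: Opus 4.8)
The plan is to apply Corollary~\ref{numbersP} repeatedly, along a sequence of scales $\theta_m\downarrow 0$ chosen so that the auxiliary ``high'' frequency $(2\pi-\gamma)/\theta_m$ produced by that corollary is a large \emph{integer} $N_m$ --- which turns the output polynomial $P_{d,\gamma}$ into a genuine member of $\mathcal A^+$ --- and so that this $N_m$ lies in the set $\{k:\epsilon_k<m^{-2}\}$, which is infinite precisely because $\underline{\lim}_{k\to\infty}\epsilon_k=0$. (The hypothesis $k\epsilon_k\ge1$ is used only so that $\tilde w$ is an admissible weight and $\mathcal A(\tilde w)$ is defined; note $\|P\|_{\mathcal A(\tilde w)}=\sum_k k\epsilon_k a_k<\infty$ automatically, $P$ being a polynomial.)

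Concretely, I would write $P(\theta)=\sum_{k=1}^n a_k e^{ik\theta}$ with $n=\deg P$, set $\gamma_m:=1/m$, and choose inductively integers $N_1<N_2<\cdots$ with
\[
N_m>\max\{2\pi n,\ m^3,\ 2\pi/\theta_{m-1}\}\qquad\text{and}\qquad \epsilon_{N_m}<m^{-2},
\]
where $\theta_0:=+\infty$ and, once $N_m$ has been fixed, $\theta_m:=(2\pi-\gamma_m)/N_m$; such $N_m$ exist because $\{k:\epsilon_k<m^{-2}\}$ is infinite. Then $\theta_m\in(0,1/n]$ (since $N_m>2\pi n$), $\theta_m$ decreases to $0$ (since $N_m>2\pi/\theta_{m-1}$ and $N_m>m^3$), and, crucially, $(2\pi-\gamma_m)/\theta_m=N_m\in\mathbb N$.

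Next I would apply Corollary~\ref{numbersP} with $\theta_0=\theta_m$ and $\gamma=\gamma_m$, obtaining $d_m\in\bigl[\theta_m/(4\gamma_m),\,n\theta_m/\gamma_m\bigr]$ for which $Q_m:=P_{d_m,\gamma_m}$ has nonnegative Fourier coefficients summing to $1$, with its extra term carrying the integer frequency $N_m>n$; hence $Q_m\in\mathcal A^+$. Corollary~\ref{numbersP} also gives $|1-Q_m(\theta_m)|\le 2n\theta_m\gamma_m$, so $|1-Q_m(\theta_m)|/\theta_m\le 2n\gamma_m\to0$, which is the first limit. For the second, comparing the definitions yields $P-Q_m=\frac{d_m}{1+d_m}\bigl(\sum_{k=1}^n a_k e^{ik\theta}-e^{iN_m\theta}\bigr)$, and since $N_m>n$ the frequencies are disjoint, so
\[
\|P-Q_m\|_{\mathcal A(\tilde w)}=\frac{d_m}{1+d_m}\Bigl(\|P\|_{\mathcal A(\tilde w)}+N_m\epsilon_{N_m}\Bigr)\le d_m\|P\|_{\mathcal A(\tilde w)}+d_mN_m\epsilon_{N_m}.
\]
Here $d_m\le n\theta_m/\gamma_m=nm(2\pi-\gamma_m)/N_m<2\pi n/m^2$, so the first summand $\to0$; and $d_mN_m\le(n/\gamma_m)\,\theta_mN_m=nm(2\pi-\gamma_m)<2\pi nm$, whence the second summand is $<2\pi nm\cdot m^{-2}=2\pi n/m\to0$. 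Thus $\|P-Q_m\|_{\mathcal A(\tilde w)}\to0$, completing the construction.

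The one step requiring genuine care is the joint bookkeeping in the inductive choice of $N_m$: Corollary~\ref{numbersP} only bounds $d_m$ by $n\theta_m/\gamma_m$, so the weight $N_m\epsilon_{N_m}$ of the inserted frequency enters the estimate multiplied by a factor of order $1/\gamma_m=m$; one must therefore pick $N_m$ large enough to force both $\theta_m/\gamma_m\to0$ and $\theta_m\downarrow0$, yet still within the thin-but-infinite set $\{k:\epsilon_k<m^{-2}\}$ supplied by $\underline{\lim}_{k\to\infty}\epsilon_k=0$. Everything else reduces to the elementary estimates displayed above.
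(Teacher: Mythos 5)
Your proposal is correct and follows essentially the same route as the paper: both apply Corollary~\ref{numbersP} at scales $\theta_m=(2\pi-\gamma_m)/N_m$ so that the inserted frequency is an integer exceeding $\deg P$, pick that frequency along the subsequence where $\epsilon_k$ is small (available since $\underline{\lim}_k\epsilon_k=0$), and balance $d_m\le n\theta_m/\gamma_m$ against the weight of the new frequency. The only difference is bookkeeping: the paper couples the two parameters via $\gamma_m=(\epsilon_{s_m}+ne_n/s_m)^{1/2}$, whereas you decouple them by fixing $\gamma_m=1/m$ and forcing $\epsilon_{N_m}<m^{-2}$ with $N_m$ large; your estimates all check out.
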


\begin{proof}
Let a polynomial $P$ be fixed, and let ${\rm deg}\, P=n.$ Define
$e_n:=\sup \{\epsilon_k: 1 \le k \le n \},$
and choose a subsequence
$(\epsilon_{s_m})_{m\ge 1}$ such that
\begin{equation*}\label{Lim11}
\lim_{m\to\infty}\,\epsilon_{s_m}=0.
\end{equation*}
Fix an integer $m> 2\pi n$  such that
\begin{equation}\label{Lim12}
\gamma_m:=\left(\epsilon_{s_m}+\frac{n e_n}{s_m} \right)^{1/2} \le 1, 
\end{equation}
and put
\begin{equation*}
\theta_m:=\frac{2\pi-\gamma_m}{s_m}.
\end{equation*}
Using  Corollary \ref{numbersP}
with $\theta_0=\theta_m$ and $\gamma=\gamma_m$ we conclude that
there exist $d_m$, $0< d_m \le n\theta_m/\gamma_m,$ and a polynomial $Q_m:=P_{d_m,\gamma_m} \in \mathcal A^+,$ ${\rm deg}\, Q_m=s_m>n,$ given by
\[
Q_m(\theta)=\sum_{k=1}^n \frac{a_k}{1+d_m} e^{i k \theta}
+ \frac{d_m}{1+d_m} e^{i s_m \theta},
\]
such that
\[
\frac{|1-Q_m(\theta_m)|}{\theta_m}
\le 2n\gamma_m \qquad \text{and}
\qquad
\| P- Q_m\|_{\mathcal A}=\frac{2d_m}{1+d_m}\le \frac{2n\theta_m}{\gamma_m}.
\]
Hence (\ref{Lim12})
and the latter inequality imply that
\begin{align*}
\| P - Q_m\|_{\mathcal A({\tilde w})} \le \max \left (n e_n, s_m \epsilon_{s_m} \right)
\| P - Q_m \|_{\mathcal A}
\le s_m\gamma_m^2
\frac{2n \theta_m}{\gamma_m}\le  4\pi n \gamma_m.
\end{align*}
Since $\gamma_m \to 0$ as $m \to \infty,$ the statement follows.
\end{proof}

\begin{remark}
Here and in the sequel, the assumption $k \epsilon_k \ge 1, k \in \mathbb N,$ is of a purely technical nature and has been made to simplify our exposition.
\end{remark}

Recall from Section \ref{prelim} that the set
of aperiodic polynomials  
is dense in any $\mathcal{A}^+(w)$.
Thus Theorem  \ref{betaT}
implies the following statement.

\begin{cor}\label{corbeta11}
Let $(\epsilon_k)_{k=1}^{\infty}\subset (0,\infty)$
satisfy
$
\underline{\lim}_{k\to\infty}\,\epsilon_k=0$ and
$k \epsilon_k \ge 1,$ $k \in \mathbb N,$
and let $\tilde w=(k\epsilon_k)_{k=1}^{\infty}.$
Then for every  $f\in \mathcal{A}({\tilde w})$
there exists a sequence of polynomials
 $(Q_m)_{m =1}^{\infty}\in \mathcal A^+$
such that
\begin{equation*}\label{SS11}
\lim_{m\to\infty}\,\inf_{\theta\in(0,1/m]}\,\frac{|1-Q_m(\theta)|}{\theta}=0
\qquad \mbox{and}\qquad
\lim_{m\to\infty}\,\|f-Q_m\|_{\mathcal A(\tilde w)}=0.
\end{equation*}
\end{cor}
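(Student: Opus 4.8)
The plan is to obtain Corollary~\ref{corbeta11} as a packaging of Theorem~\ref{betaT} together with the density of polynomials having nonnegative coefficients, by a diagonal argument. Recall from Section~\ref{prelim} that the polynomials lying in $\mathcal A^+(w)$ are dense in $\mathcal A^+(w)$ for every weight $w$; apply this with $w=\tilde w$. Thus, for the given $f$ and each $m\in\mathbb N$, we first use this density (in $\mathcal A^+(\tilde w)$) to fix a polynomial $P_m\in\mathcal A^+$ with
\[
\|f-P_m\|_{\mathcal A(\tilde w)}<\tfrac{1}{2m}.
\]

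Next, feed each $P_m$ into Theorem~\ref{betaT}: since the hypotheses on $(\epsilon_k)_{k\ge1}$ and on the weight $\tilde w=(k\epsilon_k)_{k\ge1}$ are precisely those of that theorem, it produces a sequence $(\theta_j^{(m)})_{j\ge1}$ decreasing to $0$ and polynomials $(Q_j^{(m)})_{j\ge1}\subset\mathcal A^+$ with
\[
\lim_{j\to\infty}\frac{|1-Q_j^{(m)}(\theta_j^{(m)})|}{\theta_j^{(m)}}=0,
\qquad
\lim_{j\to\infty}\|P_m-Q_j^{(m)}\|_{\mathcal A(\tilde w)}=0.
\]
Because $\theta_j^{(m)}\to0$ as $j\to\infty$, one may then pick an index $j=j(m)$ so large that simultaneously $\theta_{j(m)}^{(m)}\le 1/m$, $\;\dfrac{|1-Q_{j(m)}^{(m)}(\theta_{j(m)}^{(m)})|}{\theta_{j(m)}^{(m)}}<\dfrac1m$, and $\|P_m-Q_{j(m)}^{(m)}\|_{\mathcal A(\tilde w)}<\dfrac1{2m}$. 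Put $Q_m:=Q_{j(m)}^{(m)}\in\mathcal A^+$.

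Then the triangle inequality gives $\|f-Q_m\|_{\mathcal A(\tilde w)}<1/m$, which tends to $0$ and is the second assertion; and since $\theta_{j(m)}^{(m)}\in(0,1/m]$,
\[
\inf_{\theta\in(0,1/m]}\frac{|1-Q_m(\theta)|}{\theta}
\ \le\ \frac{|1-Q_m(\theta_{j(m)}^{(m)})|}{\theta_{j(m)}^{(m)}}\ <\ \frac1m ,
\]
which again tends to $0$ as $m\to\infty$, giving the first assertion. The only point in this scheme that takes a moment's thought is that Theorem~\ref{betaT} controls the ratio $|1-Q(\theta)|/\theta$ only at a \emph{single} point $\theta_m$, whereas the corollary asks for the infimum over the whole shrinking interval $(0,1/m]$; but this gap is merely \emph{apparent}, since the approximating points delivered by Theorem~\ref{betaT} already tend to $0$, so we just choose one of them inside $(0,1/m]$ and then the infimum over the interval is trivially dominated by the value there. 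No rescaling is needed, as the weight in Theorem~\ref{betaT} coincides with $\tilde w$; hence I do not expect any genuine obstacle here — all the real work is already contained in Theorem~\ref{betaT} and in the density argument of Section~\ref{prelim}.
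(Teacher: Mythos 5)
Your proposal is correct and takes essentially the same route as the paper: approximate $f$ by polynomials from $\mathcal A^{+}(\tilde w)$ (the density observation of Section \ref{prelim}) and feed these into Theorem \ref{betaT}, extracting a diagonal sequence $Q_m=Q^{(m)}_{j(m)}$ with $\theta^{(m)}_{j(m)}\in(0,1/m]$. Note only that, exactly as in the paper's one-line argument, this implicitly reads the hypothesis as $f\in\mathcal A^{+}(\tilde w)$ (the intended statement), since polynomials from $\mathcal A^{+}$ can only approximate such $f$ in the $\mathcal A(\tilde w)$-norm.
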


The next result is our basic statement, allowing one to spread out the upper estimates for $|1-Q_m|$ proved in Theorem \ref{betaT}
from the sequence $(\theta_m)_{m=1}^{\infty}$  to a larger set containing it.
The result will help us to provide counterexamples on $L^p$-integrability of $(1-z)(1-F)^{-1}.$

\begin{thm}\label{L1}
Let $\psi: (0,1]\mapsto (0,\infty)$
and
$\chi: (0,1]\mapsto (0,\infty)$
be continuous functions  satisfying
\begin{equation}\label{LLL}
\lim_{\theta\to 0+}\,\psi(\theta)=0 \qquad \text{and} \qquad
\lim_{\theta\to 0+}\,\frac{\chi(\theta)}{\psi(\theta)}=0.
\end{equation}
Then for every polynomial $P \in \mathcal A^+$ there exist a sequence
$(\theta_m)_{m=1}^{\infty}\subset (0,1]$  decreasing to zero
and a sequence of polynomials $(Q_m)_{m =1}^{\infty} \subset \mathcal A^+$
such that for all $m \in \N:$
\begin{itemize}
\item [(i)] $m \theta_m \le 2\pi,$
\item [(ii)] $|1-Q_m(\theta_m)|\le 2\psi(\theta_m)\theta_m,$
\item [(iii)] $\| P-Q_m\|_{\mathcal A}\le \frac{2\theta_m}{\chi(\theta_m)}.$
\end{itemize}
Moreover, for each $m \in \mathbb N$ and for each $\theta$ such that $|\theta-\theta_m|\le \psi(\theta_m)\chi(\theta_m)\theta_m$
one has
\begin{equation*}\label{inta}
|1-Q_m(\theta)|\le 10\psi(\theta_m)\theta_m.
\end{equation*}
\end{thm}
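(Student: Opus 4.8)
The idea is to combine Theorem~\ref{betaT} with Corollary~\ref{numbersP}, extracting from the latter not just the pointwise bound $|1-Q_m(\theta_m)|$ but also a Lipschitz-type control of $Q_m$ on a whole interval around $\theta_m$, via the derivative estimate $\|Q_m'\|_\infty \le n(1+2\pi/\gamma)$. The first step is to rerun the construction in the proof of Theorem~\ref{betaT}, but with the two abstract functions $\psi,\chi$ in place of the sequence $(\epsilon_k)$. Given a polynomial $P$ of degree $n$, for each large $m$ I would choose $\theta_m$ small (so that $\theta_m \le 1/n$ and $m\theta_m \le 2\pi$; the latter forces $s_m \ge$ something like $2\pi/\theta_m$, matching the frequency $(2\pi-\gamma)/\theta_0$ appearing in \eqref{num4}), and set $\gamma_m := \chi(\theta_m)$, which tends to $0$ by \eqref{LLL} and is $\le 1$ for $m$ large. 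Corollary~\ref{numbersP} with $\theta_0 = \theta_m$, $\gamma = \gamma_m$ then yields $d_m \in [\theta_m/(4\gamma_m), n\theta_m/\gamma_m]$ and a polynomial $Q_m = P_{d_m,\gamma_m} \in \mathcal A^+$ with $|1-Q_m(\theta_m)| \le 2n\theta_m\gamma_m = 2n\theta_m\chi(\theta_m)$ and $\|Q_m'\|_\infty \le n(1 + 2\pi/\gamma_m) = n(1 + 2\pi/\chi(\theta_m))$, together with $\|P-Q_m\|_{\mathcal A} = 2d_m/(1+d_m) \le 2n\theta_m/\chi(\theta_m)$.

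The bounds the theorem asks for are the above with $\psi$ inserted in the right places; the natural way to make this work is to absorb the factor $n$ by a normalisation/reindexing. Specifically, since the statement concerns a fixed $P$ (hence fixed $n$), I would instead choose $\theta_m$ depending on $m$ so that, in the above, the quantity playing the role of ``$\psi(\theta_m)$'' is exactly $n\chi$-comparable to $\psi$; cleaner is to replace $\gamma_m$ above by $\psi(\theta_m)$ where the pointwise bound demands it. Precisely, I would pick $\theta_m \downarrow 0$ and then apply Corollary~\ref{numbersP} with $\gamma = \chi(\theta_m)$; this gives (ii) in the form $|1-Q_m(\theta_m)| \le 2n\theta_m\chi(\theta_m) \le 2n\theta_m\psi(\theta_m)$ once $\chi \le \psi$ near $0$, but that has a spurious $n$. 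To kill the $n$, observe that we are free to \emph{rescale}: apply everything not to $P(\theta)$ but use the fact that $n$ is a harmless absolute constant for fixed $P$, and simply \emph{redefine} $\psi,\chi$ by absorbing $n$ — i.e. replace the target sequence $\theta_m$ by a sparse enough subsequence so that $n\chi(\theta_m) \le \psi(\theta_m)$ and $n \le 1/\psi(\theta_m)^{1/2}$, say. The cleanest bookkeeping: choose $\theta_m$ so small that $n\,\chi(\theta_m) \le \psi(\theta_m)$ (possible since $\chi/\psi \to 0$) and $n\,\psi(\theta_m) \le 1$ (possible since $\psi \to 0$); then (ii) reads $|1-Q_m(\theta_m)| \le 2n\theta_m\chi(\theta_m) \le 2\psi(\theta_m)\theta_m$, and (iii) reads $\|P-Q_m\|_{\mathcal A} \le 2n\theta_m/\chi(\theta_m)$. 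Hmm — (iii) wants $2\theta_m/\chi(\theta_m)$ without the $n$; so one should instead set $\gamma = n\chi(\theta_m)$ in Corollary~\ref{numbersP}, giving $\|P-Q_m\|_{\mathcal A} \le 2n\theta_m/(n\chi(\theta_m)) = 2\theta_m/\chi(\theta_m)$, while (ii) becomes $|1-Q_m(\theta_m)| \le 2n\theta_m\cdot n\chi(\theta_m) = 2n^2\theta_m\chi(\theta_m) \le 2\psi(\theta_m)\theta_m$ provided $\theta_m$ is chosen with $n^2\chi(\theta_m) \le \psi(\theta_m)$ — again possible by \eqref{LLL}. Condition (i) $m\theta_m \le 2\pi$ is arranged by taking $\theta_m$ small; all three conditions are simultaneously satisfiable for a suitably sparse decreasing sequence $(\theta_m)$.

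For the final ``spreading out'' estimate, fix $m$ and $\theta$ with $|\theta - \theta_m| \le \psi(\theta_m)\chi(\theta_m)\theta_m$. By the mean value theorem and the derivative bound from Corollary~\ref{numbersP} (with $\gamma = n\chi(\theta_m)$),
\[
|Q_m(\theta) - Q_m(\theta_m)| \le \|Q_m'\|_\infty\,|\theta-\theta_m| \le n\Bigl(1 + \frac{2\pi}{n\chi(\theta_m)}\Bigr)\,\psi(\theta_m)\chi(\theta_m)\theta_m.
\]
The right side equals $n\psi(\theta_m)\chi(\theta_m)\theta_m + 2\pi\psi(\theta_m)\theta_m$; the first term is $\le \psi(\theta_m)\theta_m$ (since $n\chi(\theta_m) \le 1$ for $\theta_m$ small, which also follows from the conditions above), so it is $\le (1+2\pi)\psi(\theta_m)\theta_m \le 8\psi(\theta_m)\theta_m$. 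Combining with (ii), $|1-Q_m(\theta)| \le |1-Q_m(\theta_m)| + |Q_m(\theta_m)-Q_m(\theta)| \le 2\psi(\theta_m)\theta_m + 8\psi(\theta_m)\theta_m = 10\psi(\theta_m)\theta_m$, as required. The one point requiring care — the main (though minor) obstacle — is the bookkeeping of the factor $\deg P = n$ through the three inequalities: the derivative bound contains $n(1+2\pi/\gamma)$, so $\gamma$ must be chosen of size comparable to $\chi$ up to the constant $n$, and one must verify that the single choice $\gamma = n\chi(\theta_m)$ (or a similar normalisation) makes all of (i)–(iii) and the final displayed estimate hold simultaneously for a sparse enough sequence $\theta_m \downarrow 0$; this is routine given \eqref{LLL} but is the only place where anything needs to be checked.
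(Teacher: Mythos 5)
Your handling of the estimates themselves is fine: taking $\gamma=n\chi(\theta_m)$ (instead of the paper's geometric mean $\gamma(\theta)=\psi^{1/2}(\theta)\chi^{1/2}(\theta)$) and imposing $n^2\chi(\theta_m)\le\psi(\theta_m)$, $n\chi(\theta_m)\le 1$ does give (ii), (iii) and, via the derivative bound of Corollary \ref{numbersP}, the constant $2+(1+2\pi)\le 10$ in the spreading estimate, exactly as in the paper. But there is a genuine gap: you never ensure that $Q_m\in\mathcal A^+$. The polynomial $P_{d,\gamma}$ of Corollary \ref{numbersP} contains the extra term $\frac{d}{1+d}e^{i(2\pi-\gamma)\theta/\theta_0}$, whose frequency $(2\pi-\gamma)/\theta_m$ is, for a $\theta_m$ chosen merely "sparse and small" as in your recipe, not an integer; then $Q_m$ is only a quasi-exponential polynomial with a real frequency, not an element of $\mathcal A^+$ as the theorem requires. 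This is not a dismissible technicality in this paper: producing \emph{integer} frequencies (as opposed to the real frequencies of Croft and Hawkes) is precisely the point, and your concluding remark that the only thing to check is the bookkeeping of $n$ overlooks it. Condition (i) is also asserted by fiat rather than derived.

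The missing idea, which is how the paper closes both holes at once, is to reverse the order of choice: do not pick $\theta_m$ first, but define $\tau(\theta)=\bigl(2\pi-\gamma(\theta)\bigr)/\theta$ (with your normalization, $\gamma(\theta)=n\chi(\theta)$, which is continuous and tends to $0$), note that $\tau$ is continuous on a small interval $(0,\Theta_0]$ and $\tau(\theta)\to+\infty$ as $\theta\to 0$, and choose $\theta_m$ by the intermediate value theorem so that $\tau(\theta_m)=m$. Then the added exponential is exactly $e^{im\theta}$, so $Q_m\in\mathcal A^+$, and moreover $m\theta_m=2\pi-\gamma(\theta_m)\le 2\pi$, which is condition (i) for free; since $\theta_m\le 2\pi/m\to 0$, your smallness requirements $n^2\chi(\theta_m)\le\psi(\theta_m)$ and $n\chi(\theta_m)\le 1$ hold for all sufficiently large $m$, after which your computation of (ii), (iii) and the $10\psi(\theta_m)\theta_m$ bound goes through unchanged. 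With that amendment your argument coincides, up to the harmless change of normalization of $\gamma$, with the paper's proof.
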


\begin{proof}
Let a polynomial $P$ be fixed, and let ${\rm deg}\, P =n.$
From
(\ref{LLL}) it follows that $\lim_{\theta\to 0+}\chi(\theta)=0,$ and then $\lim_{\theta \to 0+}\chi(\theta)\psi(\theta)=0.$
Since we have $\lim_{\theta \to 0+}\chi(\theta)(\psi(\theta))^{-1}=0$ as well, there exists  $\Theta_0\in (0,1]$ such that
\begin{equation}\label{eps}
\chi(\theta)\psi(\theta)+
n \left(\frac{\chi^{1/2}(\theta)}{\psi^{1/2}(\theta)}
+\theta\right)
\le 1,\qquad\theta\in (0,\Theta_0].
\end{equation}
Define
\begin{equation*}\label{FunA}
\tau(\theta):=\frac{2\pi-\gamma(\theta)}
{\theta},\qquad
\gamma(\theta):=\psi^{1/2}(\theta)\chi^{1/2}(\theta),\qquad \theta\in (0,\Theta_0],
\end{equation*}
and note that, in particular,
$n\theta\le 1,$ and $\gamma(\theta)\le 1$ for all $\theta\in (0,\Theta_0].$

Moreover, if $\theta\in (0,\Theta_0]$,
then by (\ref{eps}),
\begin{equation}\label{labD}
n\gamma(\theta)\le
\frac{\psi^{1/2}(\theta)\gamma(\theta)}{\chi^{1/2}(\theta)}
=\psi(\theta),\qquad
\frac{n}{\gamma(\theta)}
\le\frac{\psi^{1/2}(\theta)}{\gamma(\theta)\chi^{1/2}(\theta)}
=\frac{1}{\chi(\theta)}.
\end{equation}

Since  $\gamma$ is continuous on $(0,\Theta_0]$ and
$
\lim_{\theta\to 0+}\,\tau(\theta)=+\infty,
$
there exists a sequence $(\theta_m)_{m\ge m_0}\subset [0,\Theta_0]$ satisfying
\[
\tau(\theta_m)=m,\qquad m\ge m_0.
\]
Moreover, as $\lim_{\theta \to 0+}\gamma(\theta)=0$, we may also assume that
$ m \theta_m\le 2\pi.$

Next, we fix $m \ge \max (m_0,n),$  set $\theta_0=\theta_m$ and
$\gamma=\gamma(\theta_m),$ and apply
 Corollary \ref{numbersP} to the polynomial $P.$   By (\ref{labD}), we infer that
there exist  $d_m >0$ and a polynomial $Q_m=P_{d_m, \gamma(\theta_m)}\in \mathcal A^+$ such  that
\[
|1-Q_m(\theta_m)|
\le 2n\gamma(\theta_m)\theta_m\le 2\psi(\theta_m)\theta_m,
\]
and
\[
\|P - Q_m \|_{\mathcal A}\le \frac{2n\theta_m}{\gamma(\theta_m)}
\le \frac{2\theta_m}{\chi(\theta_m)}.
\]
Moreover, by  \eqref{num1} and (\ref{labD}),
\begin{equation}\label{deriv}
\|Q_m'\|_{\infty} \le  n\left(1+\frac{2\pi}{\gamma(\theta_m)}\right)
\le \frac{\gamma(\theta_m)}{\chi(\theta_m)}
\left(1+\frac{2\pi}{\gamma(\theta_m)}\right)
\le \frac{(1+2\pi)}{\chi(\theta_m)}.
\end{equation}
Using
\begin{equation}\label{IntegrA}
|1-Q_m(\theta)|\le |1-Q_m(\theta_m)|+
\int_{\theta_m}^{\theta}
|Q_m'(s)|\,ds
\end{equation}
 and (\ref{deriv}), we conclude that if
 $|\theta-\theta_m|\le \psi(\theta_m)\chi(\theta_m)\theta_m,$
then
\[
|1-Q_m(\theta)|
\le 2\psi(\theta_m)\theta_m+\frac{1+2\pi}{\chi(\theta_m)}
|\theta-\theta_m|
\le 10 \psi(\theta_m)\theta_m.
\]
\end{proof}
\begin{rem}\label{Rem33}
Let $P\in \mathcal A^+$
and let $Q_m\in \mathcal A^+$  be polynomials given by Theorem \ref{L1}.
If $(w_k)_{k=1}^{\infty}\subset [1,\infty)$ is a nondecreasing  sequence,
then  the estimate from Theorem \ref{L1}, (iii)
 yields
\begin{equation*}\label{Bbeta}
\| P-Q_m\|_{\mathcal A(w)} \le w_m
\| P-Q_m\|_{\mathcal A}\le \frac{2\theta_m w_m}
{\chi(\theta_m)}.
\end{equation*}
In particular, if
$w_k=k^\nu$, $k\in \N,$ for some $\nu\in (0,1)$, then
in view of $\theta _m m\le 2\pi $, $m \in \N$, one has
\begin{equation*}\label{Bbeta0}
\|P-Q_m\|_{\mathcal A(\nu)} \le \frac{2\theta_m m^\nu}
{\chi(\theta_m)}
\le 2(2\pi)^\nu\frac{\theta_m^{1-\nu}}
{\chi(\theta_m)}.
\end{equation*}
\end{rem}

It will be convenient to separate the next easy corollary of Theorem \ref{L1}

\begin{cor}\label{corcos}
Let $\nu \in [0,1)$ and let $\varphi: (0,1]\mapsto (0,\infty)$
be a continuous function such that
\begin{equation}\label{phicond}
\lim_{\theta\to 0+}\,\theta^{1-\nu}\varphi(\theta)=0 \qquad \text{and}
\qquad \lim_{\theta\to 0+}\,\varphi(\theta)=\infty.
\end{equation}
Then for every polynomial $P\in \mathcal A^+$ there exist a sequence
$(\theta_m)_{m=1}^{\infty}\subset (0,1]$ decreasing to zero and a sequence
of polynomials $(Q_m)_{m=1}^{\infty}\subset \mathcal A^+$
such that for all $m \in \N:$
\begin{itemize}
\item [(i)] $\theta_m m \le 2\pi,$
\item [(ii)] $|1-Q_m(\theta_m)|\le 2\varphi(\theta_m)\theta_m^{2-\nu},$ 
\item [(iii)] $\| P-Q_m\|_{\mathcal A(\nu)}\le \frac{2(2\pi)^\nu}{\varphi^{1/2}(\theta_m)}.$
\end{itemize}
Moreover, for each $m \in \N$ and each $\theta$ such that  $|\theta-\theta_m|\le \varphi^{3/2}(\theta_m)
\theta_m^{3-2\nu}$ one has
\[
|1-Q_m(\theta)|\le 10\varphi(\theta_m)\theta_m^{2-\nu}.
\]
\end{cor}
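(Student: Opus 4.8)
The plan is to obtain the corollary as a direct specialization of Theorem~\ref{L1}. Set
\[
\psi(\theta):=\theta^{1-\nu}\varphi(\theta),\qquad
\chi(\theta):=\theta^{1-\nu}\varphi^{1/2}(\theta),\qquad \theta\in(0,1].
\]
Both functions are continuous and positive on $(0,1]$, and the two limiting conditions in \eqref{phicond} are exactly what is needed to verify the hypotheses \eqref{LLL}: indeed $\psi(\theta)=\theta^{1-\nu}\varphi(\theta)\to 0$ as $\theta\to 0$ by the first condition in \eqref{phicond}, while $\chi(\theta)/\psi(\theta)=\varphi^{-1/2}(\theta)\to 0$ as $\theta\to 0$ by the second.

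First I would apply Theorem~\ref{L1} with this choice of $\psi$ and $\chi$ to the given polynomial $P$, producing sequences $(\theta_m)_{m=1}^\infty$ decreasing to zero and $(Q_m)_{m=1}^\infty\subset\mathcal A^+$. Item (i) is then item (i) of Theorem~\ref{L1}, and (ii) follows at once because $2\psi(\theta_m)\theta_m=2\varphi(\theta_m)\theta_m^{2-\nu}$. For the ``moreover'' part one computes
\[
\psi(\theta_m)\chi(\theta_m)\theta_m
=\varphi(\theta_m)\theta_m^{1-\nu}\cdot\varphi^{1/2}(\theta_m)\theta_m^{1-\nu}\cdot\theta_m
=\varphi^{3/2}(\theta_m)\theta_m^{3-2\nu},
\]
so the interval of radii supplied by the last statement of Theorem~\ref{L1} is precisely $|\theta-\theta_m|\le\varphi^{3/2}(\theta_m)\theta_m^{3-2\nu}$, and on it $|1-Q_m(\theta)|\le 10\psi(\theta_m)\theta_m=10\varphi(\theta_m)\theta_m^{2-\nu}$, which is the asserted bound.

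It remains to upgrade the unweighted estimate of Theorem~\ref{L1}(iii) to the weighted estimate (iii). For $\nu\in(0,1)$ I would invoke Remark~\ref{Rem33} with $w_k=k^\nu$ (while for $\nu=0$ one uses Theorem~\ref{L1}(iii) directly, since then $\mathcal A(\nu)=\mathcal A$ and $(2\pi)^\nu=1$): combining Theorem~\ref{L1}(iii) with $\theta_m m\le 2\pi$ gives
\[
\|P-Q_m\|_{\mathcal A(\nu)}\le 2(2\pi)^\nu\frac{\theta_m^{1-\nu}}{\chi(\theta_m)}
=\frac{2(2\pi)^\nu}{\varphi^{1/2}(\theta_m)},
\]
which is exactly (iii). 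Since Theorem~\ref{L1} already carries the full analytic content, there is essentially no obstacle here; the only point deserving a moment's care is confirming that the substitution indeed produces functions $\psi,\chi$ meeting \eqref{LLL}, and this is where each of the two conditions in \eqref{phicond} is used exactly once.
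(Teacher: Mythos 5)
Your proposal is correct and coincides with the paper's own proof: the paper makes exactly the same substitution $\psi(\theta)=\varphi(\theta)\theta^{1-\nu}$, $\chi(\theta)=\varphi^{1/2}(\theta)\theta^{1-\nu}$, checks \eqref{LLL} via \eqref{phicond}, and then cites Theorem~\ref{L1} together with Remark~\ref{Rem33}. Your additional explicit verifications (the identity $\psi\chi\theta=\varphi^{3/2}\theta^{3-2\nu}$ and the separate remark about $\nu=0$) are just the routine details the paper leaves implicit.
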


\begin{proof}
Define
\[
\psi(\theta):=\varphi(\theta)\theta^{1-\nu} \qquad \text{and}
\qquad\chi(\theta):=\varphi^{1/2}(\theta)\theta^{1-\nu},\qquad\theta\in (0,1].
\]
Since $\chi$ and $\psi$ satisfy
(\ref{LLL}), the corollary follows from
Theorem \ref{L1} and Remark \ref{Rem33}.
\end{proof}

By density arguments, the next result  follows directly from Corollary  \ref{corcos}.

\begin{cor}\label{cseq}
Let $\varphi: (0,1]\mapsto (0,\infty)$ be
a continuous  function satisfying
(\ref{phicond}), and let $\nu \in [0,1).$
Then for every $f\in \mathcal{A}^+(\nu)$ there exist a sequence of polynomials $(Q_m)_{m=1}^{\infty}\subset \mathcal A^+(\nu)$,
and a sequence $(\theta_m)_{m=1}^{\infty}\subset (0,1]$ decreasing to zero, such that
\[
\lim_{m \to\infty}\|f-Q_m\|_{\mathcal A(\nu)}=0
\]
and
\begin{equation*}\label{Res}
\sup \left \{
\frac{|1-Q_m(\theta)|}{\varphi(\theta_m)\theta_m^{2-\nu}} : |\theta-\theta_m|\le \varphi^{3/2}(\theta_m) \theta_m^{3-2\nu} \right \}\le 10.
\end{equation*}
\end{cor}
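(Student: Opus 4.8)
The plan is to deduce this from Corollary \ref{corcos} by a standard diagonal-in-the-approximating-sequence argument, using that polynomials are dense in $\mathcal{A}^+(\nu)$ (Section \ref{prelim}).

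Fix $f \in \mathcal{A}^+(\nu)$. First I would pick a sequence of polynomials $(P_j)_{j \ge 1} \subset \mathcal{A}^+$ with $\|f - P_j\|_{\mathcal{A}(\nu)} < 1/j$ for all $j$; such a sequence exists by the density of polynomials in $\mathcal{A}^+(\nu)$, and each such polynomial automatically lies in $\mathcal{A}^+(\nu)$ since the defining sum is finite. Then I would apply Corollary \ref{corcos} to each $P_j$, obtaining a sequence $(\theta_m^{(j)})_{m \ge 1}$ decreasing to zero and polynomials $(Q_m^{(j)})_{m \ge 1} \subset \mathcal{A}^+$ obeying (i)--(iii) and the displayed spreading estimate. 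The point is that, by (iii) together with $\varphi(\theta) \to \infty$ as $\theta \to 0$ (the second condition in \eqref{phicond}) and $\theta_m^{(j)} \to 0$ as $m \to \infty$, the bound $\|P_j - Q_m^{(j)}\|_{\mathcal{A}(\nu)} \le 2(2\pi)^\nu \varphi^{-1/2}(\theta_m^{(j)})$ tends to $0$ as $m \to \infty$; moreover by (i) we have $\theta_m^{(j)} \le 2\pi/m \to 0$ as well. Hence for each $j$ I can choose $m(j)$ so large that $Q_j := Q_{m(j)}^{(j)}$ and $\theta_j := \theta_{m(j)}^{(j)}$ satisfy both $\|P_j - Q_j\|_{\mathcal{A}(\nu)} < 1/j$ and $\theta_j < \theta_{j-1}$ (with the convention $\theta_0 := 1$), which forces $(\theta_j)_{j \ge 1}$ to decrease to $0$.

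It then remains to verify the two conclusions. For the approximation, the triangle inequality gives $\|f - Q_j\|_{\mathcal{A}(\nu)} \le \|f - P_j\|_{\mathcal{A}(\nu)} + \|P_j - Q_j\|_{\mathcal{A}(\nu)} < 2/j \to 0$. For the pointwise estimate, the spreading bound in Corollary \ref{corcos}, applied to $P_j$ at the index $m(j)$, says precisely that $|1 - Q_j(\theta)| \le 10 \varphi(\theta_j)\theta_j^{2-\nu}$ whenever $|\theta - \theta_j| \le \varphi^{3/2}(\theta_j)\theta_j^{3-2\nu}$, that is, the supremum in the statement is $\le 10$. Relabeling the index $j$ by $m$ completes the argument.

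I do not expect any serious obstacle: the only mildly delicate point is the diagonal choice of $m(j)$, which must simultaneously make $\|P_j - Q_j\|_{\mathcal{A}(\nu)}$ small and keep the selected points $(\theta_j)$ strictly decreasing to zero; both are possible because, for each fixed $j$, Corollary \ref{corcos} already provides $\theta_m^{(j)} \to 0$ monotonically and $\|P_j - Q_m^{(j)}\|_{\mathcal{A}(\nu)} \to 0$ as $m \to \infty$.
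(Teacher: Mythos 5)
Your argument is correct and is essentially the paper's own (the paper gives no details, stating only that the corollary follows from Corollary \ref{corcos} "by the density arguments", which is exactly your density-plus-diagonal scheme). One small repair: requiring only $\theta_j<\theta_{j-1}$ does not by itself force $\theta_j\to 0$, so in the diagonal choice also demand, say, $\theta_j<1/j$ (or take $m(j)\ge j$, whence $\theta_j\le 2\pi/m(j)\to 0$ by (i)) — which your own observation that $\theta^{(j)}_m\le 2\pi/m$ already makes possible.
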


\section{Main results}

Using our construction of exponential polynomials from the previous section,
we now produce
a dense set of functions  $F$ ``almost'' satisfying the Erd\H{o}s-Feller-Pollard's  condition \eqref{erdos}
but having such a strong singularity at $1$ that $(1-z)(1-F)^{-1}$ is unbounded in $\D.$
To this aim, we employ either Baire-category arguments (as in Theorem \ref{G}) or, alternatively, an iterative procedure (as in Theorem \ref{G11}).
Thus, we show that the Wiener-type condition \eqref{erdos} is the best one can hope for
as far as the boundedness of $(1-z)(1-F)^{-1}$ is concerned.
We then use power weights $w=(k^\nu)_{k=1}^{\infty}, \nu \in [0,1),$ to construct
examples of $f\in \mathcal A^+(\nu)$ close to the constant function $1$ on a sufficiently large set (but violating \eqref{erdos}). This will be used in the next section
to study the property  $(1-z)(1-F)^{-1} \in L^p(\mathbb T)$ for a fixed $p \in (1,\infty)$ in terms of the Taylor coefficients of $F.$

\begin{prop}\label{G0}
Let  $L_\theta : X \mapsto (0,\infty), \theta \in (0,1],$ be a family of continuous functionals on a complete metric space $(X, \rho),$
and let  $c\ge 0.$
Suppose that
for any $f\in X$ there exists a sequence
$(f_m)_{m = 1}^{\infty}\subset X$ satisfying
\begin{equation}\label{zeroI}
\lim_{m\to\infty}\,\rho (f,f_m)=0 \qquad \text{and} \qquad \limsup_{m\to\infty}\,\inf_{\theta\in (0,1/m]}\,
L_\theta [f_m]\le c.
\end{equation}
Then there exists a residual set $S \subset X$
(in particular, dense in $X$)
such that
\begin{equation}\label{4A0}
\sup_{m\in N}\,\inf_{\theta\in (0,1/m]}\,\,
L_\theta[f]\le c,\qquad f\in S.
\end{equation}
\end{prop}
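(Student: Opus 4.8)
The plan is to run a standard Baire category argument in the complete metric space $(X,\rho)$. The first thing I would record is that, because $(0,1/(m+1)]\subset(0,1/m]$, the map $m\mapsto\inf_{\theta\in(0,1/m]}L_\theta[f]$ is nondecreasing, so $\sup_{m\in\mathbb N}\inf_{\theta\in(0,1/m]}L_\theta[f]=\lim_{m\to\infty}\inf_{\theta\in(0,1/m]}L_\theta[f]$. Consequently \eqref{4A0} amounts to exhibiting a residual set $S$ with $\inf_{\theta\in(0,1/m]}L_\theta[f]\le c$ for all $f\in S$ and all $m\in\mathbb N$, and I would obtain such an $S$ as a countable intersection of open dense sets indexed by pairs $(m,j)\in\mathbb N\times\mathbb N$.

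Concretely, I would set
\[
U_{m,j}:=\Bigl\{f\in X:\ \exists\,\theta\in(0,1/m]\ \text{such that}\ L_\theta[f]<c+\tfrac1j\Bigr\},
\]
that is, the set where $\inf_{\theta\in(0,1/m]}L_\theta[f]<c+1/j$. Openness of $U_{m,j}$ is immediate from the continuity of the \emph{individual} functionals $L_\theta$ (no uniformity in $\theta$ is needed): if $f\in U_{m,j}$ then $L_{\theta_0}[f]<c+1/j$ for some $\theta_0\in(0,1/m]$, and $L_{\theta_0}<c+1/j$ persists on a $\rho$-ball about $f$, which therefore lies in $U_{m,j}$.

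The density of $U_{m,j}$ is the only point where hypothesis \eqref{zeroI} enters. Given $g\in X$ and $\delta>0$, I would apply \eqref{zeroI} to $f=g$ to get $(g_m)_{m\ge1}\subset X$ (call the running index $k$) with $\rho(g,g_k)\to0$ and $\limsup_{k\to\infty}\inf_{\theta\in(0,1/k]}L_\theta[g_k]\le c<c+\tfrac1j$. Then for all large $k$ one has simultaneously $\rho(g,g_k)<\delta$ and $\inf_{\theta\in(0,1/k]}L_\theta[g_k]<c+\tfrac1j$; choosing in addition $k\ge m$, monotonicity gives $\inf_{\theta\in(0,1/m]}L_\theta[g_k]\le\inf_{\theta\in(0,1/k]}L_\theta[g_k]<c+\tfrac1j$, so $g_k\in U_{m,j}$ and lies within $\delta$ of $g$. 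Hence $U_{m,j}$ is dense.

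Finally, since $(X,\rho)$ is complete, the Baire category theorem makes $S:=\bigcap_{m\in\mathbb N}\bigcap_{j\in\mathbb N}U_{m,j}$ residual, in particular dense; and for $f\in S$, letting $j\to\infty$ in $\inf_{\theta\in(0,1/m]}L_\theta[f]<c+1/j$ and then taking the supremum over $m$ yields \eqref{4A0}. I do not anticipate a genuine obstacle here; the only things to watch are the monotonicity of $m\mapsto\inf_{\theta\in(0,1/m]}L_\theta[\cdot]$ (used both to reduce the statement to the family $\{U_{m,j}\}$ and, in the density step, to descend from scale $1/k$ to scale $1/m$) and the observation that continuity of each single $L_\theta$ already suffices, so one never needs joint continuity in $(f,\theta)$.
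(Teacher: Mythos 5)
Your proof is correct, and it takes a different (more self-contained) route than the paper. The paper works with $\mathcal F_m(f):=\inf_{\theta\in(0,1/m]}L_\theta[f]$, observes that each $\mathcal F_m$ is upper semicontinuous (as an infimum of continuous functionals) and monotone in $m$, invokes the cited theorem that the continuity points of $\mathcal F_m$ form a residual set $S_m$, sets $S=\bigcap_m S_m$, and then for $f\in S$ passes to the limit along the hypothesis sequence, $\mathcal F_m(f)=\lim_n\mathcal F_m(f_n)\le\limsup_n\mathcal F_n(f_n)\le c$. You instead apply the Baire category theorem directly to the explicitly constructed sets $U_{m,j}=\{f:\mathcal F_m(f)<c+1/j\}$, whose openness needs only continuity of each single $L_\theta$ and whose density is exactly where \eqref{zeroI} (together with the monotonicity of $m\mapsto\mathcal F_m$) enters. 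Both arguments are Baire-category arguments at heart and both use the same monotonicity, but yours avoids the external result on continuity points of semicontinuous functions and the limit-interchange step, and it yields a marginally sharper conclusion: your $S=\bigcap_{m,j}U_{m,j}$ is precisely the set of all $f$ satisfying \eqref{4A0}, exhibited as a dense $G_\delta$, whereas the paper produces a residual subset of that set. The paper's route, on the other hand, is shorter modulo the cited theorem and isolates the semicontinuity structure that may be reusable elsewhere.
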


\begin{proof}
Define a functional $\mathcal F_m$, $m\in \N$,  on $X$ by
$$
\mathcal F_m(f):=\inf_{\theta \in (0,1/m]} L_\theta [f], \qquad f \in X.
$$
Note that
\begin{equation}\label{FfA}
\mathcal F_m(f)\le \mathcal F_n(f),\qquad f\in X,\quad
n\ge m.
\end{equation}
Since $(L_\theta)_{\theta \in (0,1]}$ are continuous, $\mathcal F_m$ is upper-semicontinuous on $X$ for each $m \in \mathbb N,$ by a standard argument.
By e.g. \cite[Theorem 9.17.3]{Giles} for every $m \in \mathbb N$ the set $S_m$ of continuity points of $\mathcal F_m$ is residual.  Hence
\[
S:=\cap_{m\in\N} S_m
\]
is residual as well.
Thus,  by
(\ref{zeroI}) and (\ref{FfA}), for every  $f\in S$ and every $m \in \N:$
\[
\mathcal F_m(f)=\lim_{n\to\infty}\mathcal F_m(f_n)\le
\limsup_{n\to\infty}\,\mathcal F_n(f_n)\le c,
\]
and the statement follows.
\end{proof}

\begin{thm}\label{G}
Let $\nu\in [0,1)$,
and let $\varphi: (0,1]\mapsto (0,\infty)$ be
a continuous  function
satisfying (\ref{phicond}).
Then there exists a residual set $S \subset \mathcal{A}^+(\nu)$  of aperiodic functions with the following property:
for every $f\in S$
there is
a  sequence $(\theta_m)_{m=1}^{\infty}\subset (0,1] $ decreasing to zero such that
\begin{equation}\label{4A}
|1-f(\theta)|\le 10\varphi(\theta_m)\theta_m^{2-\nu},\quad |\theta-\theta_m|\le \varphi^{3/2}(\theta_m)\theta_m^{3-2\nu},\quad m\in \N.
\end{equation}
\end{thm}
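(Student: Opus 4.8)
The plan is to deduce Theorem~\ref{G} by applying the abstract Baire category scheme of Proposition~\ref{G0} to the metric space $X = \mathcal A^+(\nu)$, with an appropriate choice of the continuous functionals $L_\theta$. The natural choice is
\[
L_\theta[f] := \frac{|1-f(\theta)|}{\varphi(\theta)\theta^{2-\nu}}, \qquad f \in \mathcal A^+(\nu),\quad \theta \in (0,1],
\]
which is indeed a family of continuous (in fact, Lipschitz in $f$ by \eqref{Ina2}) strictly positive functionals, and to run the scheme with the constant $c=0$. The input hypothesis \eqref{zeroI} of Proposition~\ref{G0} is exactly what Corollary~\ref{corcos} (together with the density of polynomials in $\mathcal A^+(\nu)$, i.e. its consequence Corollary~\ref{cseq}) provides: given $f \in \mathcal A^+(\nu)$, the sequence $(Q_m)_{m=1}^\infty \subset \mathcal A^+(\nu)$ furnished there satisfies $\|f - Q_m\|_{\mathcal A(\nu)} \to 0$ and $\inf_{\theta \in (0,1/m]} L_\theta[Q_m] \le 10\, \varphi(\theta_m)\theta_m^{2-\nu}/(\varphi(\theta_m)\theta_m^{2-\nu}) \to 0$ — here one uses $\theta_m m \le 2\pi$, so $\theta_m \in (0, 2\pi/m]$, and after a harmless reindexing (passing to $m' = \lceil 2\pi m\rceil$ or shrinking the $\theta_m$) one gets $\theta_m \in (0,1/m]$ as required by the $\inf$ in \eqref{zeroI}. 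Thus Proposition~\ref{G0} yields a residual set $S_0 \subset \mathcal A^+(\nu)$ on which $\inf_{\theta \in (0,1/m]} L_\theta[f] = 0$ for every $m$.

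Next I would upgrade the pointwise bound at a single $\theta$ to the bound on the whole interval $|\theta - \theta_m| \le \varphi^{3/2}(\theta_m)\theta_m^{3-2\nu}$ asserted in \eqref{4A}. For this I would replace the functionals $L_\theta$ above by the "interval" functionals
\[
\widetilde L_\theta[f] := \sup\Bigl\{ \frac{|1-f(\vartheta)|}{\varphi(\theta)\theta^{2-\nu}} : |\vartheta - \theta| \le \varphi^{3/2}(\theta)\theta^{3-2\nu} \Bigr\}, \qquad \theta \in (0,\Theta_0],
\]
which are again continuous in $f$ (a supremum of a uniformly equicontinuous-in-$f$ family over a compact $\vartheta$-set) and strictly positive. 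The point is that Corollary~\ref{cseq} gives precisely $\widetilde L_{\theta_m}[Q_m] \le 10$ along the constructed sequence, so the hypothesis \eqref{zeroI} of Proposition~\ref{G0} holds with $c = 10$ for the family $(\widetilde L_\theta)$. Applying Proposition~\ref{G0} now produces a residual set $S_1$ with $\inf_{\theta \in (0,1/m]} \widetilde L_\theta[f] \le 10$ for all $m$; choosing for each $m$ a $\theta_m \in (0,1/m]$ with $\widetilde L_{\theta_m}[f] \le 11$ (say) and noting $\theta_m \to 0$, this gives a sequence along which \eqref{4A} holds (with the constant $10$ absorbed — one may rename the continuous function $\varphi$, or simply keep $11$; to land exactly on the constant $10$ of the statement, apply the construction with $\varphi$ replaced by a slightly larger $\widetilde\varphi$, still satisfying \eqref{phicond}, and observe the final estimate is monotone in $\varphi$).

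Finally, to get that $S$ consists of \emph{aperiodic} functions, I would intersect $S_1$ with the set of aperiodic functions in $\mathcal A^+(\nu)$, which by Remark~\ref{aperf} is residual in $\mathcal A^+(\nu)$; the intersection of two residual sets in the complete metric space $\mathcal A^+(\nu)$ is again residual, hence nonempty and dense, and we take $S := S_1 \cap \{\text{aperiodic functions}\}$.

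\textbf{Main obstacle.} The routine parts (continuity of the functionals, density of polynomials) are immediate from the earlier results. The one genuine point requiring care is the bookkeeping between the normalization $\theta_m m \le 2\pi$ coming from Corollary~\ref{corcos}/\ref{cseq} and the normalization $\theta \in (0,1/m]$ demanded by the $\inf$ in Proposition~\ref{G0}: one must check that after passing to a subsequence or rescaling the index, the hypothesis \eqref{zeroI} is legitimately met, and that the supremum defining $\widetilde L_\theta$ is taken over a $\vartheta$-interval that stays inside $(0,1]$ for small $\theta$ (which it does, since $\varphi^{3/2}(\theta)\theta^{3-2\nu} = o(\theta)$ by \eqref{phicond}, so the interval is eventually $\subset (\theta/2, 2\theta)$). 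Neither difficulty is serious, but the statement's precise constant $10$ and the exact form of the interval need to be matched to what Corollary~\ref{cseq} delivers.
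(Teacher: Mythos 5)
Your proposal is correct and takes essentially the same route as the paper: the paper's proof applies Proposition~\ref{G0} with $c=10$ directly to the interval functionals $L_{\theta_0}[f]=\sup\{|1-f(\theta)|/(\varphi(\theta_0)\theta_0^{2-\nu}) : |\theta-\theta_0|\le \varphi^{3/2}(\theta_0)\theta_0^{3-2\nu}\}$, verified via \eqref{Ina2} and Corollary~\ref{cseq}, and then intersects the resulting residual set with the residual set of aperiodic functions, exactly as in your second and third paragraphs. Only your warm-up first paragraph is slightly off (with that normalization the ratio from Corollary~\ref{corcos} is bounded by a constant, not tending to $0$, so $c=0$ is not available), but you discard it anyway, and your remarks on the reindexing $\theta_m m\le 2\pi$ versus $\theta\in(0,1/m]$ and on attaining the constant $10$ are legitimate fine points handled tacitly in the paper.
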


\begin{proof}
Consider a family $(L_{\theta_0})_{\theta_0 \in (0,1]}$ of continuous functionals on
$\mathcal{A}^+(\nu)$
given by
\begin{equation}\label{G111}
L_{\theta_0}[f]:=\sup \left \{
\frac{|1-f(\theta)|}{\varphi(\theta_0)\theta_0^{2-\nu}} : |\theta-\theta_0|
\le \varphi^{3/2}(\theta_m)\theta_m^{3-2\nu}\right\},
\end{equation}
for each $\theta_0\in (0,1].$
Using (\ref{Ina2})
and  Corollary \ref{cseq},
we infer that $(L_\theta)_{\theta \in (0,1]}$  satisfies the assumptions  of  Proposition
\ref{G0} with $c=10.$
Therefore, there exists a residual set in $\mathcal{A}^{+}(\nu)$ satisfying (\ref{4A0}) with $c=10.$
Note that the set of aperiodic functions in $\mathcal{A}^{+}(\nu)$ is residual.
Taking the intersection of the two sets, we obtain a residual set satisfying (\ref{4A0}) with $c=10$ again.

\end{proof}

Similarly,  Corollary \ref{corbeta11}
and Proposition \ref {G0} imply the following statement.

\begin{thm}\label{betakN}
Let $(\epsilon_k)_{k=1}^{\infty}$
be a positive sequence such that
\[
\underline{\lim}_{k\to\infty}\,\epsilon_k=0 \qquad \text{and} \qquad k \epsilon_k \ge 1, \quad k \in \mathbb N.
\]
If $\tilde w=(k\epsilon_k)_{k=1}^{\infty}$ and $f\in
\mathcal{A}^{+}(\tilde w),$ then for each $\epsilon>0$  there exists an aperiodic function
$g\in  \mathcal{A}^+({\tilde w})$  such that
\begin{equation}\label{G1beta}
\|f-g\|_{{\mathcal A}({\tilde w})}\le \epsilon \qquad \text{and} \qquad
\lim_{m\to\infty}\,
\frac{|1-g(\theta_m)|}{\theta_m}=0
\end{equation}
for some  sequence
$(\theta_m)_{m=1}^{\infty}$ decreasing to zero.
\end{thm}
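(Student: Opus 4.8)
The plan is to derive this statement as a soft consequence of the Baire--category principle of Proposition~\ref{G0}, fed with the explicit polynomial approximations of Corollary~\ref{corbeta11} and with the residuality of the aperiodic functions from Remark~\ref{aperf}; it mirrors the proof of Theorem~\ref{G}, and essentially all the real work has already been done in Corollary~\ref{numbersP} and Theorem~\ref{betaT}.

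Concretely, I would apply Proposition~\ref{G0} on the complete metric space $X=\mathcal A^+(\tilde{w})$, with $c=0$ and with the family of functionals
\[
L_\theta[g]:=\frac{|1-g(\theta)|}{\theta},\qquad \theta\in(0,1],\quad g\in\mathcal A^+(\tilde{w}).
\]
Each $L_\theta$ is continuous on $\mathcal A^+(\tilde{w})$: by \eqref{Ina2} one has the Lipschitz bound $|L_\theta[g_1]-L_\theta[g_2]|\le\theta^{-1}\|g_1-g_2\|_{\mathcal A(\tilde{w})}$. (The nominal requirement in Proposition~\ref{G0} that $L_\theta$ be strictly positive plays no role in its proof, which uses only upper semicontinuity of the infima $\mathcal F_m$ and their monotonicity in $m$, so it may be ignored.) The approximation hypothesis \eqref{zeroI} of Proposition~\ref{G0} with $c=0$ is precisely the content of Corollary~\ref{corbeta11}: given $g\in\mathcal A^+(\tilde{w})\subset\mathcal A(\tilde{w})$, that corollary supplies \emph{exponential polynomials} $Q_m\in\mathcal A^+$ (hence $Q_m\in\mathcal A^+(\tilde{w})$, since they have finitely many nonzero coefficients and $\tilde{w}\subset(1,\infty)$) with $\|g-Q_m\|_{\mathcal A(\tilde{w})}\to0$ and $\inf_{\theta\in(0,1/m]}L_\theta[Q_m]\to0$, so in particular $\limsup_{m\to\infty}\inf_{\theta\in(0,1/m]}L_\theta[Q_m]=0$.

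Proposition~\ref{G0} then produces a residual set $S_0\subset\mathcal A^+(\tilde{w})$ on which
\[
\inf_{\theta\in(0,1/m]}\frac{|1-g(\theta)|}{\theta}=0\qquad\text{for every }m\in\N.
\]
By Remark~\ref{aperf} the aperiodic functions form a residual, hence dense, subset of $\mathcal A^+(\tilde{w})$; thus $S:=S_0\cap\{\text{aperiodic functions}\}$ is residual and dense. Given $f\in\mathcal A^+(\tilde{w})$ and $\epsilon>0$ as in the statement, I would pick $F\in S$ with $\|f-F\|_{\mathcal A(\tilde{w})}\le\epsilon$. This $F$ is aperiodic, and since $\inf_{\theta\in(0,1/m]}|1-F(\theta)|/\theta=0$ for every $m$, for each $m$ we may choose $\theta_m\in(0,1/m]$ with $|1-F(\theta_m)|/\theta_m<1/m$; passing to a subsequence we may assume $(\theta_m)$ is strictly decreasing, and then $\theta_m\to0$ with $|1-F(\theta_m)|/\theta_m\to0$, which is exactly \eqref{G1beta}.

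The point to watch here is not an obstacle so much as bookkeeping: the substantive construction---producing, from a given polynomial, a nearby polynomial in the weighted norm $\mathcal A(\tilde{w})$ that is close to $1$ at a point of $\T$ near $1$ with the correct size---was already carried out in Corollary~\ref{numbersP}, Theorem~\ref{betaT} and Corollary~\ref{corbeta11}, while the passage from polynomials to a dense (in fact residual) set of series, together with aperiodicity, is handled abstractly by Proposition~\ref{G0} and Remark~\ref{aperf}. The only genuinely delicate earlier ingredient was the choice of the parameters $\gamma_m$ and $\theta_m$ in Theorem~\ref{betaT} making the weighted error $\|P-Q_m\|_{\mathcal A(\tilde{w})}$ decay while still forcing $|1-Q_m(\theta_m)|/\theta_m\to0$; here we merely quote it.
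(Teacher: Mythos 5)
Your proposal is correct and follows essentially the same route as the paper, which derives Theorem \ref{betakN} precisely by combining Corollary \ref{corbeta11} with Proposition \ref{G0} (and the residuality of aperiodic functions from Remark \ref{aperf}). Your extra remarks — that the strict positivity of $L_\theta$ is immaterial to the proof of Proposition \ref{G0} and the explicit extraction of the sequence $(\theta_m)$ — are sound bookkeeping that the paper leaves implicit.
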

Now we present another approach to Theorem
\ref{G} avoiding category arguments. Although the approach leads to a slightly weaker statement,
it seems more transparent.

\begin{thm}\label{G11}
Let $\nu\in [0,1)$  be fixed,
and let $\varphi$ satisfy
(\ref{phicond}).
If $f \in \mathcal A^+(\nu),$ then for all  $\epsilon>0$ and $\delta>0$ there exist an
aperiodic function
$g\in \mathcal{A}^+(\nu)$ and a sequence $(\theta_m)_{m=1}^{\infty} \subset (0,1]$ decreasing to zero such that
$
\| f-g\|_{{\mathcal A}(\nu)}\le \epsilon
$
and
\begin{equation}\label{dd}
|1-g(\theta)|\le (10+\delta)\varphi(\theta_m)\theta_m^{2-\nu} \qquad \text{if} \qquad
|\theta-\theta_m|\le \varphi^{3/2}(\theta_m)\theta_m^{3-2\nu}.
\end{equation}
\end{thm}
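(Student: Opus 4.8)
The plan is to realize $F$ as the $\mathcal A(\nu)$-limit of an explicit sequence of polynomials from $\mathcal A^+(\nu)$ obtained by iterating Corollary~\ref{corcos}, with the successive perturbations chosen so small that (a) the sequence converges to an aperiodic $F\in\mathcal A^+(\nu)$ within distance $\epsilon$ of $f$, and (b) the bound established near $\theta_m$ at the $m$-th step survives all later perturbations up to the additional summand $\delta\varphi(\theta_m)\theta_m^{2-\nu}$. First I would reduce to the case where $f$ is a polynomial: by the density of aperiodic polynomials in $\mathcal A^+(\nu)$ recorded in Section~\ref{prelim}, pick an aperiodic polynomial $P_0\in\mathcal A^+(\nu)$ with $\|f-P_0\|_{\mathcal A(\nu)}\le\epsilon/2$ and with strictly positive coefficient at $e^{i\theta}$.

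Now run the iteration. Given a polynomial $P_{m-1}\in\mathcal A^+(\nu)$ for some $m\ge1$, apply Corollary~\ref{corcos} to $P_{m-1}$: since it yields an admissible sequence of points decreasing to zero, I may pick $\theta_m$ as small as I wish, with $\theta_m<\theta_{m-1}$ when $m\ge2$, together with a polynomial $P_m\in\mathcal A^+(\nu)$ such that $\theta_m m\le 2\pi$,
\[
|1-P_m(\theta)|\le 10\,\varphi(\theta_m)\theta_m^{2-\nu}\quad\text{whenever}\quad|\theta-\theta_m|\le\varphi^{3/2}(\theta_m)\theta_m^{3-2\nu},
\]
and $\|P_{m-1}-P_m\|_{\mathcal A(\nu)}\le 2(2\pi)^\nu\varphi^{-1/2}(\theta_m)$. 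By (\ref{phicond}) one has $\varphi(\theta)\to\infty$ as $\theta\to0$, so the last bound tends to $0$ with $\theta_m$; hence $\theta_m$ may be chosen small enough that in addition
\[
\|P_{m-1}-P_m\|_{\mathcal A(\nu)}\le\min\Bigl\{\frac{\epsilon}{2^{m+1}},\ \frac{\delta}{2^{m}}\min_{1\le j\le m-1}\varphi(\theta_j)\theta_j^{2-\nu}\Bigr\},
\]
the second entry being vacuous for $m=1$. Each $P_m$ has nonnegative coefficients summing to $1$, and, by the explicit form of the polynomials in Corollary~\ref{numbersP} (to which Corollary~\ref{corcos} reduces), the coefficient of $e^{i\theta}$ in $P_m$ equals that of $P_0$ divided by $\prod_{i=1}^m(1+d_i)$, where $d_i>0$ is the parameter produced at step $i$ and $\|P_{i-1}-P_i\|_{\mathcal A}=2d_i/(1+d_i)$.

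To finish, pass to the limit. Since $\sum_{m\ge1}\|P_{m-1}-P_m\|_{\mathcal A(\nu)}\le\epsilon/2$, the sequence $(P_m)$ is Cauchy, so it converges in the complete metric space $\mathcal A^+(\nu)$ to some $F$ with $\|f-F\|_{\mathcal A(\nu)}\le\epsilon$; convergence in $\mathcal A(\nu)$ implies coefficientwise convergence and, via (\ref{Ina2}), uniform convergence. As $\|P_{i-1}-P_i\|_{\mathcal A}\to0$ we get $d_i\to0$, whence $\sum_i d_i<\infty$, $\prod_i(1+d_i)<\infty$, and so the coefficient of $e^{i\theta}$ in $F$ is positive; thus $F$ is aperiodic. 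Finally, fix $m$ and $\theta$ with $|\theta-\theta_m|\le\varphi^{3/2}(\theta_m)\theta_m^{3-2\nu}$; using (\ref{Ina2}) and that $\min_{1\le j\le i-1}\varphi(\theta_j)\theta_j^{2-\nu}\le\varphi(\theta_m)\theta_m^{2-\nu}$ for $i\ge m+1$,
\[
|1-F(\theta)|\le|1-P_m(\theta)|+\sum_{i\ge m+1}\|P_{i-1}-P_i\|_{\mathcal A(\nu)}\le 10\,\varphi(\theta_m)\theta_m^{2-\nu}+\delta\,\varphi(\theta_m)\theta_m^{2-\nu},
\]
which is exactly (\ref{dd}).

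The only delicate point is the bookkeeping at the end of each iteration step: the size of $\|P_{m-1}-P_m\|_{\mathcal A(\nu)}$ must be kept small not only for summability but also relative to the finitely many, already frozen quantities $\varphi(\theta_j)\theta_j^{2-\nu}$ with $j<m$. This is precisely what the freedom to shrink $\theta_m$ provides, since $\varphi^{-1/2}(\theta_m)\to0$; no other step presents a genuine obstacle, the construction being a routine iteration of Corollary~\ref{corcos}.
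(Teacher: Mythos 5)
Your proposal is correct and follows essentially the same route as the paper's proof: an iterated application of Corollary \ref{corcos}, with the $m$-th perturbation forced below $\min\bigl\{\epsilon 2^{-m-1},\ \delta 2^{-m}\min_{j<m}\varphi(\theta_j)\theta_j^{2-\nu}\bigr\}$ so that the bound frozen at stage $m$ survives the tail, followed by passage to the limit in the complete space $\mathcal{A}^+(\nu)$. The only deviations are minor: the paper secures aperiodicity by shrinking $\epsilon$ so that the whole ball around the initial aperiodic polynomial consists of aperiodic functions (Proposition \ref{Open}) instead of tracking the coefficient of $e^{i\theta}$, and your step ``$d_i\to 0$, whence $\sum_i d_i<\infty$'' is literally a non sequitur but is immediately repaired by your own geometric bounds, since $2d_i/(1+d_i)=\|P_{i-1}-P_i\|_{\mathcal A}\le \epsilon 2^{-i-1}<1$ forces $d_i<1$ and hence $d_i\le 2d_i/(1+d_i)\le \epsilon 2^{-i-1}$.
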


\begin{proof}
Let $\epsilon>0$ and $\delta>0$ be fixed.
Without loss of generality we may assume that $f=P_0$ is an aperiodic polynomial and that $\epsilon\in (0,1)$
is so small that the ball
\[
\{h\in \mathcal{A}^{+}(\nu):\,\|f-h\|_{\mathcal{A}(\nu)}\le \epsilon\}
\]
consists of aperiodic functions.

We construct the sequences
\[
(\theta_m)_{m=1}^{\infty}\subset (0,1) \qquad
\text{and} \qquad (\delta_m)_{m=1}^{\infty}\subset  (0,1)
\]
with
\begin{equation}\label{dgamma}
 \sum_{m=1}^\infty \delta_m\le \epsilon,
\end{equation}
and the sequence of polynomials $(P_m)_{m=1}^{\infty}\subset \mathcal A^+(\nu)$
so that, for $1\le m\le N$ and $\theta$ satisfying $|\theta-\theta_m|\le \varphi^{3/2}(\theta_m)\theta_m^{3-2\nu},$
one has
\begin{equation}\label{Ind}
|1-P_N(\theta)|\le
\left(10+\delta\sum_{j=1}^{N-1} \frac{1}{2^j}\right)\,\varphi(\theta_m)\theta_m^{2-\nu},
\end{equation}
and moreover
\begin{equation}\label{Ind1}
\| P_{m-1}-P_m\|_{\mathcal A(\nu)} \le \delta_m.
\end{equation}

Let
\[
\delta_1=\min\{\delta/2,\epsilon/2\}.
\]
By  Corollary \ref{corcos},
there exists (large enough)  $\theta_1\in (0,1)$ and a polynomial
$P_1\in \mathcal A^+(\nu)$
such that
\[
|1-P_1(\theta)|\le
10\varphi(\theta_1)\theta_1^{2-\nu},\]
whenever $
|\theta-\theta_1|\le \varphi^{3/2}(\theta_1)\theta_1^{2-2\nu},$
and
\[
\|P_0-P_1\|_{\mathcal A(\nu)}\le \frac{4\pi}{\varphi^{1/2}(\theta_1)}\le \delta_1.
\]
So, (\ref{Ind}) and (\ref{Ind1}) hold for $N=1$.

Arguing by induction, suppose that (\ref{Ind}) and (\ref{Ind1}) are true for some $N\ge 1$.
Choose $\delta_{N+1}$ satisfying
\[
\delta_{N+1}\le \frac{\delta}{2^{N+1}} \min_{1 \le m \le N} \left(\varphi(\theta_m)\theta_m^{2-\nu}\right)\quad\mbox{and}\quad
\delta_{N+1}\le \frac{\epsilon}{2^{N+1}}.
\]
Then by Corollary  \ref{corcos}
applied to $P=P_N$ there exist
$P_{N+1}\in \mathcal A^+(\nu)$,  and (small enough)
$\theta_{N+1}<\theta_N$ such  that
\begin{equation}\label{Acc}
|1-P_{N+1}(\theta)|\le
10\varphi(\theta_{N+1})\theta_{N+1}^{2-\nu}
\end{equation}
if $|\theta-\theta_{N+1}|\le \varphi^{3/2}(\theta_{N+1})
\theta_{N+1}^{3-2\nu},$
and, moreover,
\[
\| P_N-P_{N+1}\|_{\mathcal A(\nu)}\le
\frac{4\pi}{\varphi^{1/2}(\theta_{N+1})}\le \delta_{N+1}.
\]
Hence for every $m \in [1, N]$ and every $\theta$ satisfying
$|\theta-\theta_m|\le \varphi^{3/2}(\theta_m)\theta_m^{3-2\nu}
$
we have
\begin{align*}
|1-P_{N+1}(\theta)|\le&
|1-P_N(\theta)|+\| P_N-P_{N+1}\|_{\mathcal A(\nu)}\\
\le& \left(10+\delta\sum_{j=1}^{N-1} \frac{1}{2^j}\right)\varphi(\theta_m)\theta_m^{2-\nu}
+ \delta_{N+1}\\
\le& \left(10+\delta\sum_{j=1}^{N-1} \frac{1}{2^j}\right)\varphi(\theta_m)\theta_m^{2-\nu}
+ \delta\frac{\varphi(\theta_m)\theta_m^{2-\nu}}{2^{N+1}}\\
\le&
\left(10+\delta\sum_{j=1}^{N} \frac{1}{2^j}\right)\varphi(\theta_m)\theta_m^{2-\nu}.
\end{align*}
Taking in account (\ref{Acc}), we infer that
 (\ref{Ind}) and (\ref{Ind1}) hold for  $N+1$ too.

By (\ref{dgamma}) the sequence
$(P_m)_{m=0}^{\infty}$ is Cauchy in $\mathcal{A}^+(\nu),$ so there exists $g\in \mathcal{A}^+(\nu)$ such that
\[
\lim_{m\to\infty}\,\| g - P_m\|_{\mathcal A(\nu)}=0 \qquad
\mbox{and}\qquad
\| P_0 - g \|_{\mathcal A(\nu)} \le \sum_{m=1}^\infty \delta_m\le \epsilon.
\]
Finally, (\ref{Ind}) yields (\ref{dd}), and moreover $g$ is aperiodic by the above.
\end{proof}

\section{Regularity of reciprocals in terms of the $L^p$-scale}\label{lp}

In this section we will study the regularity of generating functions for renewal sequences with respect
to the $L^p$-scale.
Namely, we will be concerned with the identifying $p\ge 1$ such that
$
(1-z)(1-F)^{-1} \in L^p (\mathbb T),$
  where $F(z)=\sum_{k=1}^{\infty} a_k z^k, z \in \mathbb D,$
is aperiodic.
%  with
%$$
%a_k \ge 0, \qquad \sum_{k=1}^{\infty} a_k=1.
%$$
It is clear that if 
$$
R_f(\theta):=\frac{\theta}{1-f(\theta)}, \qquad \theta \in [-\pi,\pi],
$$
where $f(\theta)=F(e^{i\theta}),$ then
 $(1-z)(1-F)^{-1} \in L^p (\mathbb T)$ if and only if $R_f \in L^p [-\pi,\pi], p \ge 1.$ Thus,
it is enough to study the same issue for the function $R_f,$ and the results on $(1-z)(1-F)^{-1}$ below will be formulated in terms
of $R_f.$ 
We will show, in particular, that while $R_f$ has a certain amount of regularity, being in $L^p[-\pi,\pi]$ for $p \in [1,2],$
$R_f$ is not very regular in the sense that $R_f$ does not belong, in general, to $L^p[-\pi,\pi],$ if $p >3.$
This result will be put below into a more general (and sharper) context of the spaces $\mathcal A^+(\nu).$

To get positive results on the regularity of $R_f$
we will use an idea from \cite{Littl}.
In particular, we will use the following crucial result proved in \cite[Theorem 1]{Littl}.
(The result formulated in \cite{Littl} has a weaker form but the proof given there yields the statement given below.)
\begin{thm}\label{Lth}
Let
\[
\sum_{k=1}^\infty a_k=1,\qquad a_k\ge 0,
\]
and
\[
f(\theta)=\sum_{k=1}^\infty a_k\cos(\mu_k \theta +\alpha_k),\qquad \theta \in [-\pi,\pi],
\]
with $\mu_k \ge 1$ and $\alpha_k \in \mathbb R, k \ge 1.$
Then
\begin{equation}\label{lth1}
{\rm meas} \, \left(\{\theta \in [-\pi,\pi]: f(\theta)\ge 1-\epsilon \}\right) \le 4\pi\epsilon^{1/2},\qquad \epsilon\in (0,1].
\end{equation}
\end{thm}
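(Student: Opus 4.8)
The plan is to estimate the superlevel set $E:=\{\theta\in[-\pi,\pi]:f(\theta)\ge 1-\epsilon\}$ by playing a trivial upper bound for $\int_E(1-f)$ against a lower bound for the same integral that feels only ${\rm meas}(E)$ and is blind to the frequencies and phases.

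\textbf{Setup.} Since $a_k\ge0$ and $\sum_k a_k=1$, the series for $f$ converges uniformly, so $f$ is continuous, $E$ is closed (hence measurable), $f\le 1$ everywhere, and, writing $1-\cos x=2\sin^2(x/2)\ge0$,
\[
1-f(\theta)=\sum_{k=1}^\infty a_k\bigl(1-\cos(\mu_k\theta+\alpha_k)\bigr)\ge0 .
\]
On $E$ we have $0\le 1-f\le\epsilon$, whence $\int_E(1-f)\,d\theta\le\epsilon\,{\rm meas}(E)$, while Tonelli's theorem gives
\[
\int_E(1-f)\,d\theta=\sum_{k=1}^\infty a_k\int_E\bigl(1-\cos(\mu_k\theta+\alpha_k)\bigr)\,d\theta .
\]
So it suffices to bound each single-frequency integral from below by a quantity independent of $\mu_k,\alpha_k$.

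\textbf{The single-frequency lemma.} I would prove: for $\mu\ge1$, $\alpha\in\R$ and any measurable $A\subseteq[-\pi,\pi]$, setting $a:={\rm meas}(A)$,
\[
\int_A\bigl(1-\cos(\mu\theta+\alpha)\bigr)\,d\theta\ \ge\ a-4\sin(a/4)\ \ge\ \frac{a^3}{16\pi^2}.
\]
For the first inequality I invoke the bathtub principle: with $g(\theta):=1-\cos(\mu\theta+\alpha)$, the integral $\int_A g\,d\theta$ is minimised over sets of measure $a$ by a sublevel set of $g$, so $\int_A g\ge\int_0^{a}\tilde g(s)\,ds$, where $\tilde g$ is the nondecreasing rearrangement of $g$ on $[-\pi,\pi]$. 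To estimate $\tilde g$ I bound the distribution function: for $t\in(0,1]$ the set $\{\theta\in[-\pi,\pi]:g(\theta)<t\}$ is the intersection of $[-\pi,\pi]$ with a $\tfrac{2\pi}{\mu}$-periodic union of arcs of density $\tfrac1\pi\arccos(1-t)$, so it has measure at most $2\arccos(1-t)(1+1/\mu)\le4\arccos(1-t)$. (This is the one step where $\mu\ge1$ is used decisively: for $\mu<1$ the arcs need not repeat inside $[-\pi,\pi]$ and no such bound holds.) Consequently $\tilde g(s)\ge1-\cos(s/4)$ on $[0,2\pi]$, and integrating gives $\int_0^a\tilde g\ge a-4\sin(a/4)$. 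The second inequality, after the substitution $x:=a/4\in[0,\pi/2]$ and using $a\le2\pi$, reduces to the elementary estimate $x-\sin x\ge x^3/\pi^2$ on $[0,\pi/2]$, which I would check by differentiating three times.

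\textbf{Conclusion.} Put $r:={\rm meas}(E)\le2\pi$. Applying the lemma with $A=E$ to every term and summing against $(a_k)$ with $\sum_k a_k=1$,
\[
\frac{r^3}{16\pi^2}\ \le\ \int_E(1-f)\,d\theta\ \le\ \epsilon\,r,
\]
hence $r^2\le16\pi^2\epsilon$, i.e. ${\rm meas}(E)\le4\pi\epsilon^{1/2}$; for $\epsilon=1$ this is consistent with the trivial bound ${\rm meas}(E)\le2\pi$. The only genuinely delicate point is the single-frequency lemma, and within it the distribution-function estimate: one must track the constant carefully enough that the final bound is exactly $4\pi\epsilon^{1/2}$ and not merely $C\epsilon^{1/2}$. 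Everything else is bookkeeping.
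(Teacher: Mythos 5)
Your proposal is correct, and its global skeleton coincides with the paper's: bound $\int_E(1-f)\,d\theta$ above by $\epsilon\,{\rm meas}(E)$, bound the same quantity below by a single-frequency estimate that is cubic in ${\rm meas}(E)$ with constant $16\pi^2$, and solve the resulting inequality to get $4\pi\epsilon^{1/2}$. Where you genuinely diverge is in the proof of the single-frequency lemma. The paper first treats frequency $1$ (its Lemma \ref{LL3}) by a symmetrization argument -- comparing $E$ with the centered interval of the same measure -- and then passes to an arbitrary frequency $\alpha\ge 1$ by rescaling, chopping the rescaled window into $[\alpha+1]$ intervals of length at most $2\pi$, applying the frequency-$1$ bound on each piece and summing via concavity of $\epsilon\mapsto\epsilon^{1/3}$ (Corollary \ref{corcor}); the reduction from the series to one cosine is then a pigeonhole in the weights $a_k$. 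You instead handle all $\mu\ge 1$ in one stroke through the bathtub principle plus a distribution-function bound obtained by counting the at most $\lceil\mu\rceil\le\mu+1$ periods of length $2\pi/\mu$ meeting $[-\pi,\pi]$, which yields $\tilde g(s)\ge 1-\cos(s/4)$, and you lower-bound every term of the series rather than selecting one. Both routes ultimately rest on the same elementary inequality $x-\sin x\ge x^3/\pi^2$ on $[0,\pi/2]$ and produce the identical constant, so neither buys a sharper result; your version is somewhat more streamlined (one lemma instead of a lemma, a corollary and a pigeonhole), at the price of invoking the rearrangement machinery, whereas the paper's subdivision-plus-concavity argument is more hands-on. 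Your period-counting step, with the explicit factor $2\arccos(1-t)(1+1/\mu)\le 4\arccos(1-t)$ for $\mu\ge1$, is the exact analogue of the paper's splitting into $[\alpha+1]$ subintervals, and it is carried out correctly.
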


Recall that here and in the sequel {\rm meas} stands for the Lebesgue measure.

\begin{rem}\label{litrem}
Note that the above estimate is the best possible as the example of $f(\theta)=\cos \theta$ shows.
Remark also that Theorem \ref{Lth} was stated in \cite{Littl} with a constant $A$ instead of $4\pi$ above.
The uniformity of $A$ was not clarified in \cite{Littl}. Since that property is crucial for our reasoning and to be on a safe side
we provide an independent proof of Theorem \ref{Lth} in Appendix A.
\end{rem}

\begin{cor}\label{corLd}
Let $(a_k)_{k \ge m}\subset [0,\infty), m \in \N,$ and let
$
r=\sum_{k=m}^\infty a_k\in (0,1].
$
If
$$
f(\theta)=\sum_{k=m}^\infty a_k\cos k\theta,
$$
then for every $\theta_0\in (0,1]$ such that $m\theta_0\le 1,$ one has
\begin{equation}\label{lth10}
{\rm meas} \, \left(\{\theta \in [\theta_0,2\theta_0]: r-f(\theta)\le \epsilon \}\right) \le 4\pi\theta_0\sqrt{\frac{\epsilon}{r}},
\qquad \epsilon\in (0,r].
\end{equation}
\end{cor}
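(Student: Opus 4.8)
The plan is to deduce Corollary~\ref{corLd} from Theorem~\ref{Lth} by a single linear change of variable that renormalizes the short interval $[\theta_0,2\theta_0]$ to a fixed interval and simultaneously rescales the frequencies $k$ to admissible ones; the Jacobian of this change of variable is what produces the factor $\theta_0$ in the bound.

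First I would set $g(\psi):=r^{-1}f(\theta_0\psi)=\sum_{k=m}^\infty (a_k/r)\cos(k\theta_0\psi)$. This function has the shape required by Theorem~\ref{Lth}: the coefficients $a_k/r$ are nonnegative and sum to $1$, the phases vanish, and the frequencies are $\mu_k=k\theta_0$. The assumption on $\theta_0$ is exactly what ensures $\mu_k=k\theta_0\ge m\theta_0\ge 1$ for all $k\ge m$, so these frequencies are admissible; and the interval $[1,2]$ in the variable $\psi$, which corresponds to $\theta\in[\theta_0,2\theta_0]$, is contained in $[-\pi,\pi]$. Hence Theorem~\ref{Lth} applies to $g$ and gives
\[
{\rm meas}\,\bigl(\{\psi\in[-\pi,\pi]:\ g(\psi)\ge 1-\delta\}\bigr)\le 4\pi\,\delta^{1/2},\qquad \delta\in(0,1].
\]

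Next I would read this back through the substitution $\theta=\theta_0\psi$. Since $r>0$, the inequality $r-f(\theta)\le\epsilon$ is equivalent to $f(\theta_0\psi)/r\ge 1-\epsilon/r$, i.e.\ to $g(\psi)\ge 1-\epsilon/r$; consequently the set $\{\theta\in[\theta_0,2\theta_0]:\ r-f(\theta)\le\epsilon\}$ is the image of $\{\psi\in[1,2]:\ g(\psi)\ge 1-\epsilon/r\}$ under the dilation $\psi\mapsto\theta_0\psi$, and therefore its Lebesgue measure equals $\theta_0$ times the measure of the latter set. Replacing $[1,2]$ by $[-\pi,\pi]$ and applying the displayed estimate with $\delta=\epsilon/r\in(0,1]$ gives
\[
{\rm meas}\,\bigl(\{\theta\in[\theta_0,2\theta_0]:\ r-f(\theta)\le\epsilon\}\bigr)\le 4\pi\,\theta_0\,(\epsilon/r)^{1/2},
\]
which is \eqref{lth10}. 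The argument involves little beyond bookkeeping; the only two points that need to be checked are that the rescaled frequencies $k\theta_0$ are indeed $\ge 1$ (this is precisely the role of the hypothesis on $\theta_0$) and that the rescaled interval stays inside one period $[-\pi,\pi]$, where Theorem~\ref{Lth} is formulated. More generally one can use $\theta=\psi/\lambda$ with any admissible slope $\lambda$, obtaining the bound $4\pi\lambda^{-1}(\epsilon/r)^{1/2}$; the choice $\lambda=1/\theta_0$ is the one that yields the constant stated in \eqref{lth10}.
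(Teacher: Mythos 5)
Your argument is correct and is essentially the paper's own proof: the paper rescales via $\tilde f(t)=r^{-1}f(\theta_0(t+1))$ on $[0,1]$, you rescale via $g(\psi)=r^{-1}f(\theta_0\psi)$ on $[1,2]$, and in both cases the bound \eqref{lth10} comes from Theorem \ref{Lth} together with the Jacobian factor $\theta_0$. One remark on the step where you check admissibility of the frequencies: you write that the hypothesis on $\theta_0$ gives $k\theta_0\ge m\theta_0\ge 1$, but the corollary as printed assumes $m\theta_0\le 1$; what is actually needed (by you and by the paper's proof alike) is $m\theta_0\ge 1$, and indeed the statement is false under the printed hypothesis (take $f(\theta)=\cos\theta$, $\theta_0$ small, $\epsilon=1-\cos 2\theta_0$). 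This is evidently a typo in the paper --- in its only application, inside the proof of Theorem \ref{lth2}, the corollary is invoked with $m=n+1$ and $(n+1)\theta_0>1$ --- so you have tacitly proved the corrected statement, which is the one the paper uses.
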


\begin{proof}
If
$
\tilde{f}(t)=r^{-1}f(\theta_0(t+1))
$ and $\epsilon\in (0,r]$ then by (\ref{lth1}) we obtain:
\begin{align*}
&{\rm meas}\,
 \left(\{\theta \in [\theta_0,2\theta_0]: r-f(\theta)\le \epsilon \}\right)\\
 =&\theta_0\,{\rm meas}\,\left(
 \{t\in [0,1]: \; \tilde{f}(t)\ge 1-
  \epsilon/r \}
 \right)\\
 \le&
4\pi \theta_0\sqrt{\frac{\epsilon}{r}}.
\end{align*}
\end{proof}

We will also need the next technical estimate for distribution functions.
Its proof is postponed to Appendix A.

\begin{prop}\label{Cake2}
Let
$\Omega\subset [0,\infty)$ be a measurable set of finite measure, and let
$\varphi: \Omega \mapsto (0,\infty)$ be a measurable function.
Suppose that there are constants $r>\eta >0$ and $A>0$ such that
 $\eta\le \varphi\le r$ a.e. on $\Omega$ and, moreover,
   \[
 {\rm meas}\ \left( \{s\in \Omega:\,\varphi(s)\le t \} \right) \le A \sqrt{\frac{t}{r}},\qquad
   t\in [\eta,r].
   \]
Then for all $d\ge 0$ and $p\ge 1,$
\begin{equation}\label{cake4}
\int_{\Omega} \frac{ds}{(\varphi(s)+d)^p}\le
 \frac{{\rm meas}\, (\Omega)}{(r+d)^p}+
\frac{2 A p}{\eta^{p-1/2} \sqrt{r}+d^p}.
\end{equation}
\end{prop}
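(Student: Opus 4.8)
The plan is to split the integral $\int_\Omega (\varphi(s)+d)^{-p}\,ds$ into a ``good part'', where $\varphi$ is comparable to its maximum $r$, and a ``bad part'', where $\varphi$ is small and we must use the distributional hypothesis. First I would observe that on the whole of $\Omega$ we trivially have $(\varphi(s)+d)^{-p}\le (\eta+d)^{-p}$, but this is far too crude near the lower end; instead the right tool is the layer-cake (Cavalieri) formula. Concretely, set $G(t):={\rm meas}\,(\{s\in\Omega:\varphi(s)+d\le t\})$ for $t\ge 0$; since $\eta\le\varphi\le r$ we have $G(t)=0$ for $t<\eta+d$ and $G(t)={\rm meas}\,(\Omega)$ for $t\ge r+d$, and by the hypothesis $G(t)\le A\sqrt{(t-d)/r}$ on $[\eta+d,\,r+d]$ — and even more simply $G(t)\le A\sqrt{t/r}$ there, which is all we need.

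Next I would write, using $\psi(s):=\varphi(s)+d$ and the formula $\int_\Omega \psi^{-p} = p\int_0^\infty t^{-p-1}G(t)\,dt$ (valid since $\psi\ge\eta+d>0$ on $\Omega$, so there is no divergence at $0$; the integrand $G(t)$ vanishes for $t<\eta+d$),
\[
\int_{\Omega}\frac{ds}{\psi(s)^p}
= p\int_{\eta+d}^{\infty} \frac{G(t)}{t^{p+1}}\,dt
= p\int_{\eta+d}^{r+d}\frac{G(t)}{t^{p+1}}\,dt
+ p\,{\rm meas}\,(\Omega)\int_{r+d}^{\infty}\frac{dt}{t^{p+1}}.
\]
The second term is exactly ${\rm meas}\,(\Omega)/(r+d)^p$, giving the first summand on the right-hand side of \eqref{cake4}. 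For the first term I substitute the bound $G(t)\le A\sqrt{t/r}$ and extend the integral to $\int_{\eta+d}^{\infty}$, obtaining
\[
p\int_{\eta+d}^{r+d}\frac{G(t)}{t^{p+1}}\,dt
\le \frac{Ap}{\sqrt r}\int_{\eta+d}^{\infty} t^{-p-1/2}\,dt
= \frac{Ap}{\sqrt r}\cdot\frac{(\eta+d)^{-(p-1/2)}}{p-1/2}.
\]
Since $p\ge 1$ one has $p/(p-1/2)\le 2$, so this is at most $2A\,(\eta+d)^{-(p-1/2)}/\sqrt r = 2A/\big(\sqrt r\,(\eta+d)^{p-1/2}\big)$. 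It then remains only to check that $\sqrt r\,(\eta+d)^{p-1/2}\ge \eta^{p-1/2}\sqrt r + d^p$, i.e. that the right-hand side displayed in \eqref{cake4} dominates this cleaner bound; this is an elementary inequality — using $(\eta+d)^{p-1/2}\ge \eta^{p-1/2}$ handles the first term, and $\sqrt r\,(\eta+d)^{p-1/2}\ge \sqrt\eta\cdot\eta^{p-1/2}$ is not quite what is wanted, so in fact I would instead bound $\sqrt r\,(\eta+d)^{p-1/2}\ge d^{p-1/2}\sqrt r\ge \cdots$ and reconcile with the stated form, possibly at the cost of absorbing harmless absolute constants (the paper explicitly allows constants to change from line to line).

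The only genuinely delicate point, and the one I would be most careful about, is the bookkeeping at the lower endpoint: one must make sure the Cavalieri integral really starts at $\eta+d$ (so that the singularity $t^{-p-1/2}$ is integrated only over $[\eta+d,\infty)$ and produces the finite factor $(\eta+d)^{-(p-1/2)}$), and then massage $(\eta+d)^{-(p-1/2)}$ into the shape $\big(\eta^{p-1/2}\sqrt r + d^p\big)^{-1}$ claimed in \eqref{cake4}. Everything else — the layer-cake identity, extending the range of integration, and the constant $p/(p-1/2)\le 2$ — is routine. I do not expect to need the lower bound $\varphi\ge\eta$ for anything beyond guaranteeing $G(t)=0$ below $\eta+d$ and hence the convergence of all integrals.
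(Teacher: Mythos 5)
Your overall strategy coincides with the paper's: the layer-cake identity for $G(t)={\rm meas}\,(\{s\in\Omega:\varphi(s)+d\le t\})$ (the paper's Lemma \ref{Cake1}), the observation that $G$ vanishes below $\eta+d$ and equals ${\rm meas}\,(\Omega)$ above $r+d$, and the exact evaluation of the tail giving ${\rm meas}\,(\Omega)/(r+d)^p$ are all correct and match the paper. The gap is precisely at the point you flag, and it is not a matter of absolute constants. After extending $\int_{\eta+d}^{r+d}t^{-p-1/2}\,dt$ to $\int_{\eta+d}^{\infty}$ you are left with the bound $2A/\bigl(\sqrt r\,(\eta+d)^{p-1/2}\bigr)$, and the reconciliation you would need, namely $\sqrt r\,(\eta+d)^{p-1/2}\ge c\bigl(\eta^{p-1/2}\sqrt r+d^{p}\bigr)$ for some absolute $c>0$, is false: already for $p=1$ the left side is $\sqrt{r(\eta+d)}\sim\sqrt{rd}$ while the right side is $\sim d$ as $d\to\infty$, so the two sides differ by the unbounded factor $\sqrt{d/r}$. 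Thus your cleaner bound is genuinely weaker than \eqref{cake4} whenever $d\gg r$, and the $d^{p}$ in the denominator is not decorative: in the application it is what produces the $\theta_0U^p(\theta_0)$ term in Theorem \ref{lth2}.

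The missing idea is to keep the finite upper limit instead of integrating to infinity. Writing $\int_{\eta+d}^{r+d}t^{-p-1/2}\,dt=\bigl[(\eta+d)^{-(p-1/2)}-(r+d)^{-(p-1/2)}\bigr]/(p-1/2)$ and estimating the \emph{difference} (the paper uses $x^{-q}-y^{-q}\le q(y-x)/(x^{q}y)$ with $x=\eta+d$, $y=r+d$, $q=p-1/2$) gains the decaying factor $(r-\eta)/(r+d)\le r/(r+d)$ — exactly what the extension to infinity throws away. This yields $A\sqrt r/\bigl((\eta+d)^{p-1/2}(r+d)\bigr)$, and since $\sqrt r/(r+d)\le 2/(\sqrt r+\sqrt d)$ and $(\eta+d)^{p-1/2}(\sqrt r+\sqrt d)\ge\eta^{p-1/2}\sqrt r+d^{p-1/2}\sqrt d=\eta^{p-1/2}\sqrt r+d^{p}$, the claimed second term $2Ap/(\eta^{p-1/2}\sqrt r+d^{p})$ follows. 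Everything else in your proposal — the Cavalieri formula, the bookkeeping at the lower endpoint, and the first term — is sound.
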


Now we are ready to prove one of the main results of this section.
It is an extension and sharpening of Littlewood's Theorem $2$ from \cite{Littl}.
For
\begin{equation}\label{repf}
f(\theta)=\sum_{k=1}^\infty a_k e^{i k\theta}, \qquad \theta \in [-\pi,\pi],
\end{equation}
from $\mathcal A^{+},$ define for $\theta\in (0,1]:$
\begin{align*}
r(\theta):=&\sum_{k>1/\theta} a_k,\\
W(\theta):=&\sum_{1\le k\le1/\theta} ka_k,\\
U(\theta):=&\sum_{1\le k\le1/\theta} k^2a_k.
\end{align*}
Note that
\begin{equation}\label{uw}
\theta U(\theta)\le W(\theta).
\end{equation}

\begin{thm}\label{lth2}
Let $f \in \mathcal A^+$ be given by \eqref{repf}
and let $\theta_0 \in (0,1]$.
Then for each $p \ge 1$ there exists $C_p>0$ such that
for every $\theta_0\in (0,1]$ with $r(\theta_0)<1$
one has:
\begin{equation}\label{Cpp}
\int_{\theta_0}^{2\theta_0}
|R_f(\theta)|^p \,d\theta \le
C_p\left\{
\frac{\theta_0}{W^p(\theta_0)}
+\frac{\theta_0^{2-p}r^{p-1}(\theta_0)}
{W^{2p-1}(\theta_0)+r^{p-1}(\theta_0)\theta_0 U^p(\theta_0)}\right\}.
\end{equation}
\end{thm}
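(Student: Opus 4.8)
The strategy is to split the integral of $|R_f(\theta)|^p$ over $[\theta_0,2\theta_0]$ according to whether $\re f(\theta)$ is far from $1$ or close to $1$, and in the latter regime to apply the distribution-function estimate of Corollary \ref{corLd} via Proposition \ref{Cake2}. First I would observe that for $\theta$ in the range $[\theta_0,2\theta_0]$ one has $|1-f(\theta)| \ge |1-\re f(\theta)| = 1 - \re f(\theta)$ (since $\re f \le \sum a_k = 1$), so that $|R_f(\theta)| \le \theta/(1-\re f(\theta)) \le 2\theta_0/(1-\re f(\theta))$; thus it suffices to bound $\int_{\theta_0}^{2\theta_0} (1-\re f(\theta))^{-p}\,d\theta$ up to the factor $(2\theta_0)^p$. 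Here $\re f(\theta) = \sum_{k\ge 1} a_k \cos k\theta$.

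Next I would decompose $\re f = f_1 + f_2$ where $f_1(\theta) = \sum_{1\le k \le 1/\theta_0} a_k \cos k\theta$ is the low-frequency part and $f_2(\theta) = \sum_{k > 1/\theta_0} a_k\cos k\theta$ the high-frequency tail, so $\sum_{k>1/\theta_0} a_k = r(\theta_0)$ and the mass of $f_1$ is $1 - r(\theta_0)$. Write $\varphi(\theta) := (1-r(\theta_0)) - f_1(\theta) \ge 0$. Then $1 - \re f(\theta) = \varphi(\theta) + \big(r(\theta_0) - f_2(\theta)\big)$, and since $f_2(\theta) \le r(\theta_0)$ we get $1 - \re f(\theta) \ge \varphi(\theta)$; also $1 - \re f(\theta) \ge r(\theta_0) - f_2(\theta) \ge 0$. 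The cleaner route is to use the pointwise lower bound $1 - \re f(\theta) \ge \max\{\varphi(\theta),\, 0\} \ge$ a suitable "$\varphi(\theta) + d$" with an appropriate choice of $d$: in fact the trick is that $\varphi(\theta)$ can be bounded below by a positive quantity using a Taylor expansion, $1 - \cos k\theta \ge \tfrac12 (k\theta)^2 \cdot c$ for $k\theta \le 1$, giving $\varphi(\theta) \gtrsim \theta^2 U(\theta_0)$ on the relevant range when $k\theta_0 \le 1$, i.e. $\varphi$ is bounded below by $\eta := c\,\theta_0^2 U(\theta_0)$ for $\theta \in [\theta_0,2\theta_0]$, and bounded above by $r := W(\theta_0)$ (using $1-\cos x \le x$ so $\varphi(\theta) \le \theta \sum_{k\le 1/\theta_0} k a_k \le 2\theta_0 W(\theta_0)/\theta_0 \cdot \tfrac12$, up to constants; I should be careful with these constants). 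Meanwhile Corollary \ref{corLd}, applied with $m$ replaced by $1$ and $a_k$ the low-frequency coefficients normalized by $1-r(\theta_0)$, gives exactly the required distributional bound ${\rm meas}\{\theta\in[\theta_0,2\theta_0]: r - \varphi(\theta) \le \epsilon\} \le 4\pi\theta_0\sqrt{\epsilon/r}$ — wait, I need it in the form for $\varphi$ small, so I apply Corollary \ref{corLd} to $f_1$: ${\rm meas}\{\theta \in [\theta_0,2\theta_0]: (1-r(\theta_0)) - f_1(\theta) \le \epsilon\} = {\rm meas}\{\varphi(\theta)\le\epsilon\} \le 4\pi\theta_0\sqrt{\epsilon/(1-r(\theta_0))}$; since $1-r(\theta_0)$ is bounded below (we only need $r(\theta_0) < 1$, but for the sharp constants one may assume $r(\theta_0) \le 1/2$ and otherwise absorb into $C_p$), this is $\le A\sqrt{\epsilon/r}$ with $A \asymp \theta_0$ and $r = W(\theta_0)$ after adjusting.

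Then I would apply Proposition \ref{Cake2} on $\Omega = [\theta_0,2\theta_0]$ with this $\varphi$, the bounds $\eta \le \varphi \le r$, the distributional estimate with $A \asymp \theta_0$, and $d := r(\theta_0) - \sup_{[\theta_0,2\theta_0]} f_2$ — actually $d$ should be chosen as a lower bound for $r(\theta_0) - f_2(\theta)$ uniform in $\theta$, but this can be $0$, so in fact the correct application is with $d = 0$ together with the separate observation $1 - \re f \ge r(\theta_0) - f_2 \ge r(\theta_0)(1 - \cos(k_0\theta))$-type bounds; more robustly, one simply uses $d=0$ in Proposition \ref{Cake2} and gets $\int_\Omega \varphi^{-p} \le {\rm meas}(\Omega) r^{-p} + 2Ap\,\eta^{-(p-1/2)} r^{-1/2}$. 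Plugging ${\rm meas}(\Omega) = \theta_0$, $A \asymp \theta_0$, $r = W(\theta_0)$, $\eta \asymp \theta_0^2 U(\theta_0)$ and multiplying by $(2\theta_0)^p$ yields, after simplification,
\[
\int_{\theta_0}^{2\theta_0} |R_f|^p\,d\theta \lesssim_p \frac{\theta_0}{W^p(\theta_0)} + \frac{\theta_0^{p+1}}{(\theta_0^2 U(\theta_0))^{p-1/2} W^{1/2}(\theta_0)},
\]
and using \eqref{uw} ($\theta_0 U(\theta_0) \le W(\theta_0)$) to interpolate the second term between the two competing denominators $W^{2p-1}$ and $r^{p-1}\theta_0 U^p$ one recovers the stated form \eqref{Cpp}; here I expect some bookkeeping to combine the "$r$ large" vs "$r$ small" cases so that the single formula with the sum $W^{2p-1}(\theta_0) + r^{p-1}(\theta_0)\theta_0 U^p(\theta_0)$ in the denominator holds in both regimes, and this algebraic massaging (together with getting the $\eta \asymp \theta_0^2 U(\theta_0)$ lower bound on $\varphi$ right, which requires $k\theta \le 2k\theta_0 \le 2$ and hence $1 - \cos k\theta \ge c(k\theta)^2$) is the main technical obstacle rather than any deep idea. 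The use of the $L^\infty$-type bound $\varphi \le r = W(\theta_0)$ may lose a constant but is harmless since the $C_p$ is allowed to depend on $p$.
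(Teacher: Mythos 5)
There is a genuine gap, and it is fatal to the approach rather than a matter of bookkeeping. Your very first step discards the imaginary part: bounding $|R_f(\theta)|\le \theta/(1-{\rm Re}\,f(\theta))$ cannot produce the first term $\theta_0/W^p(\theta_0)$ of \eqref{Cpp}. Take $f(\theta)=e^{i\theta}$ (so $a_1=1$, $r(\theta_0)=0$, $W\equiv U\equiv 1$): then $1-{\rm Re}\,f(\theta)=1-\cos\theta\asymp\theta^2$ on $[\theta_0,2\theta_0]$, and your route gives at best $\int_{\theta_0}^{2\theta_0}\theta^p(1-\cos\theta)^{-p}\,d\theta\asymp\theta_0^{1-p}$, which blows up as $\theta_0\to0$ for $p>1$, while \eqref{Cpp} asserts a bound $\asymp\theta_0$ (and indeed $|R_f|\asymp1$ there because of the imaginary part). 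The paper's proof hinges exactly on this point: it splits $[\theta_0,2\theta_0]$ according to whether the \emph{tail} deficit $\varphi(\theta)=\sum_{k>n}a_k(1-\cos k\theta)$ is below a threshold $\eta=\min\{r,\theta_0^2W^2/(128r)\}$; on that exceptional set the low-frequency sines give ${\rm Im}\,f\ge\theta_0W(\theta_0)/4$ and the tail sines are controlled by Cauchy--Schwarz through the smallness of $\varphi$, yielding $|{\rm Im}\,f|\ge\theta_0W(\theta_0)/8$, which is where $\theta_0/W^p$ comes from; only off that set does one use $1-{\rm Re}\,f\ge\varphi(\theta)+2\theta_0^2U(\theta_0)/\pi^2$ together with Proposition \ref{Cake2}. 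Your scheme has no mechanism to recover this, and relatedly your intermediate bound lacks the factor $r^{p-1}(\theta_0)$ in the numerator of the second term, so no use of \eqref{uw} can massage it into \eqref{Cpp}.

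A second concrete error is the application of Corollary \ref{corLd} to the low-frequency block $f_1$ with $m=1$. That corollary is proved by rescaling $[\theta_0,2\theta_0]$ to $[0,1]$ and invoking Theorem \ref{Lth}, which requires all (rescaled) frequencies $k\theta_0$, $k\ge m$, to be at least $1$; hence it is a statement about the high-frequency tail, $m\theta_0\ge1$ (the ``$m\theta_0\le1$'' in its statement is a misprint --- in the paper's proof it is used with $m=n+1>1/\theta_0$). For low frequencies the square-root distributional bound with $A\asymp\theta_0$ is simply false: for $f_1(\theta)=\cos\theta$ the set $\{\theta\in[\theta_0,2\theta_0]:1-\cos\theta\le2\theta_0^2\}$ has measure $\theta_0$, exceeding $4\pi\theta_0\sqrt{2}\,\theta_0$ for small $\theta_0$. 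In fact no distributional estimate is needed for the low block, since its deficit is deterministically comparable to $\theta_0^2U(\theta_0)$ on $[\theta_0,2\theta_0]$; the paper uses it precisely in that way, as the shift $d$ in Proposition \ref{Cake2}, while the distribution-function input (Corollary \ref{corLd}) is reserved for the tail, with parameter $r=r(\theta_0)$. Your identification of the parameter $r$ in Proposition \ref{Cake2} with $W(\theta_0)$ also has no basis: the hypotheses of that proposition then fail (e.g.\ when $W\asymp U\asymp1$ and $\theta_0$ is small), again by the $\cos\theta$ example.
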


\begin{proof}
Let $n=n(\theta_0)$
satisfy
$
n\le 1/\theta_0<n+1,
$ and let $p \ge 1$ be fixed.
By assumption,
\begin{equation}\label{Assumtion}
W(\theta_0)\ge \sum_{k=1}^{n} a_k=1-r(\theta_0)> 0 \qquad \text{and}\qquad
U(\theta_0)\ge \sum_{k=1}^{n} a_k > 0.
\end{equation}

If  $\theta \in [\theta_0,2\theta_0]$  and $1 \le k \le n$ then
$0<k\theta\le 2n\theta_0 <3\pi/4$ so that
 $\sin k\theta\ge k\theta/4$ and
 $\sin k\theta/2\ge k\theta/\pi$.
Hence for $\theta\in [\theta_0,2\theta_0]$ we have
\begin{equation}\label{Ll3}
\sum_{k=1}^n a_k\sin k\theta
\ge \frac{\theta_0}{4}\sum_{k=1}^n k a_k \ge \frac{\theta_0}{4}W(\theta_0),
\end{equation}
and
\begin{align}\label{AAr}
\sum_{k=1}^n a_k(1-\cos k\theta)=&
2\sum_{k=1}^n a_k \sin^2(k\theta/2)\\
\ge& \frac{2\theta^2}{\pi^2}\sum_{k=1}^n k^2 a_k\ge
 \frac{2\theta_0^2}{\pi^2}U(\theta_0).\notag
\end{align}

If $r(\theta_0)=0$, then by
(\ref{Ll3}) we have
\[
\int_{\theta_0}^{2\theta_0}|R_f(\theta)|^p \,d\theta \le
\int_{\theta_0}^{2\theta_0}
\frac{\theta^p\,d\theta}
{(\theta_0/4W(\theta_0))^p}\le 8^p\frac{\theta_0}{W^p(\theta_0)},
\]
and (\ref{Cpp}) holds.

Let now $r(\theta_0)>0$. For $y\in [0,r(\theta_0)]$ let $S(y)$  be the set
of such $\theta\in [\theta_0,2\theta_0]$ that
\begin{equation}\label{Ll4}
\varphi(\theta):=\sum_{k=n+1}^\infty a_k(1-\cos k\theta)
\le y.
\end{equation}
Corollary \ref{corLd} then yields
\begin{equation}\label{Ll5}
|S(y)|\le 4\pi\theta_0\frac{\sqrt{y}}{\sqrt{r(\theta_0)}}, \qquad
y \in [0,r(\theta_0)].
\end{equation}

Now, since
\[
\sin^2 k\theta=1-\cos^2 k\theta\le2(1-\cos k\theta),
\]
 by (\ref{Ll4}) again, we have
\begin{align*}
\left|\sum_{k=n+1}^\infty a_k\sin k\theta\right|\le&
\left(\sum_{k=n+1}^\infty a_k\right)^{1/2}
\left(\sum_{k=n+1}^\infty a_k\sin^2 k\theta\right)^{1/2}
 \notag \\
\le& \sqrt{2} r^{1/2}(\theta_0) \varphi^{1/2}(\theta)\\
\le&
\sqrt{2 r(\theta_0)y},\qquad \theta\in S(y). \notag
\end{align*}
So, putting
$$
\eta:=\min \left\{r(\theta_0), \frac{\theta_0^2 W^2(\theta_0)}{128 r(\theta_0)} \right\},
$$
we infer that for every $\theta \in S(\eta):$
\begin{align}\label{im}
|{\rm Im}\,f(\theta)|\ge&
\left|\sum_{k=1}^na_k\sin k\theta\right|-\left|\sum_{k=n+1}^\infty a_k\sin k\theta\right|\\
\ge& \frac{\theta_0 W(\theta_0)}{4}-
\sqrt{2 r(\theta_0)\eta} \notag  \notag \\
\ge& \frac{\theta_0 W(\theta_0)}{8}.\notag
\end{align}

We now estimate the left-hand side of \eqref{Cpp} as follows. Write
\[
\int_{\theta_0}^{2\theta_0}
|R_f(\theta)|^p \,d\theta \le J_{p,1}(\theta_0)+J_{p,2}(\theta_0),
\]
where
\begin{align*}
J_{p,1}(\theta_0)&:=2^p\theta_0^p
\int_{\theta\in S(\eta)}\frac{d\theta}
{|{\rm Im}\,f(\theta)|^p},\\
J_{p,2}(\theta_0)&:=2^p\theta_0^p
\int_{\theta\in [\theta_0,2\theta_0]\setminus S(\eta)}\frac{d\theta}
{|1-{\rm Re}\,f(\theta)|^p}.
\end{align*}

We deal with each of the terms $J_{p,1}$ and $J_{p,2}$ above separately.
First, observe that by \eqref{im},
\begin{equation}\label{Jp1}
J_{p,1}(\theta_0)
\le 2^{4p}\frac{\theta_0}{W^p(\theta_0)}.
\end{equation}

Second, if in addition $\eta \le r(\theta_0),$ then
\[
\frac{\theta_0}{r(\theta_0)}\le \frac{8\sqrt{2}}{W(\theta_0)}.
\]
Setting to simplify the notation
\[
r=r(\theta_0),\quad W=W(\theta_0), \quad U=U(\theta_0),
\quad d=\frac{2\theta_0^2 U(\theta_0)}{\pi^2},
\]
and using Proposition \ref{Cake2}
and (\ref{Ll5}), we obtain that
\begin{align*}
\frac{J_{p,2}(\theta_0)}{2^p}
\le &
\theta_0^p \int_{\theta\in [\theta_0,2\theta_0]\setminus S(\eta)}\frac{d\theta}
{(\varphi(\theta)+d)^p}
\\
\le&
\frac{\theta_0^{p+1}}{(r+d)^p}+
\frac{8\pi p  \theta_0^{p+1}}{\eta^{p-1/2}\sqrt{r}+d^p}\\
\le&
\frac{\theta_0^{p+1}}{r^p}+
\frac{8\pi p \theta_0^{p+1}}{(\theta_0 W)^{2p-1}\sqrt{r}/(128 r)^{p-1/2}+d^p}\\
\le& \frac{(8\sqrt{2})^p \theta_0}{W^p}\\
+&\frac{8\pi p\theta_0^{p+1}}
{\theta_0^{2p-1} W^{2p-1}\sqrt{r}/
(128 r)^{p-1/2}+
\theta_0^{2p}U^p/(2/\pi^2)^{p}}\\
\le&
 \frac{(8\sqrt{2})^p \theta_0}{W^p}
+\frac{8\pi p  (128)^{p-1/2}}{\theta_0^{p-2}( W^{2p-1}/ r^{p-1}+
\theta_0U^p)}.
\end{align*}
Taking in account (\ref{Jp1}) we infer that (\ref{Cpp}) holds with
$
C_p=3p\cdot 2^{8p}.
$
\end{proof}

Theorem \ref{lth2} allows us to describe the integrability of $1/(1-f)$ in terms of the size of Fourier coefficients of $f \in \mathcal
A^+$.
We will need the next simple proposition on series with positive terms, proved in Appendix A.
\begin{prop}\label{n2}
Let $(q_k)_{k\ge 2^{m-1}+1}, m \in \N,$ be a
 positive decreasing sequence, and let $\alpha\ge 0$.
Then
\[
\frac{1}{2^\alpha}\sum_{n=2^{m}}^\infty n^{\alpha -1} q_n\le
\sum_{n=m}^\infty 2^{\alpha n} q_{2^n}\le
2^{1+\alpha}\sum_{n=2^{m-1}+1}^\infty n^{\alpha -1} q_n.
\]
\end{prop}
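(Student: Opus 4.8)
The plan is to prove Proposition~\ref{n2} by a Cauchy--condensation (dyadic block) argument: one splits each of the two series $\sum n^{\alpha-1}q_n$ into consecutive dyadic blocks, and on each block uses monotonicity of $(q_n)$ to replace every term $q_n$ by the single value $q_{2^n}$, while counting the number of terms in the block and estimating the factor $n^{\alpha-1}$ up to a multiplicative constant $2^{\pm\alpha}$.

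For the right-hand (upper) inequality, I would write $\{k\in\N:\, k\ge 2^{m-1}+1\}$ as the \emph{disjoint} union of the blocks $I_n:=\{k\in\N:\, 2^{n-1}<k\le 2^n\}$, $n\ge m$, each of cardinality $2^{n-1}$. For $k\in I_n$ monotonicity gives $q_k\ge q_{2^n}$, and a one-line check (distinguishing $\alpha\ge 1$, where $t\mapsto t^{\alpha-1}$ is nondecreasing, from $0\le\alpha<1$, where it is nonincreasing) yields $k^{\alpha-1}\ge 2^{-\alpha}\,2^{n(\alpha-1)}$. Hence $\sum_{k\in I_n}k^{\alpha-1}q_k\ge 2^{n-1}\cdot 2^{-\alpha}2^{n(\alpha-1)}q_{2^n}=2^{-1-\alpha}2^{n\alpha}q_{2^n}$, and summing over $n\ge m$ and using disjointness of the $I_n$ gives
\[
\sum_{n=m}^\infty 2^{\alpha n}q_{2^n}\le 2^{1+\alpha}\sum_{n=m}^\infty\sum_{k\in I_n}k^{\alpha-1}q_k=2^{1+\alpha}\sum_{k=2^{m-1}+1}^\infty k^{\alpha-1}q_k.
\]

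For the left-hand (lower) inequality I would dually write $\{k\in\N:\, k\ge 2^m\}$ as the disjoint union of $J_n:=\{k\in\N:\, 2^n\le k< 2^{n+1}\}$, $n\ge m$, each of cardinality $2^n$; note that $2^m\ge 2^{m-1}+1$, so all indices involved lie in the domain of $(q_k)$. For $k\in J_n$ one has $q_k\le q_{2^n}$ and, by the same elementary check on $t\mapsto t^{\alpha-1}$, $k^{\alpha-1}\le 2^{\alpha}2^{n(\alpha-1)}$, whence $\sum_{k\in J_n}k^{\alpha-1}q_k\le 2^n\cdot 2^{\alpha}2^{n(\alpha-1)}q_{2^n}=2^{\alpha}2^{n\alpha}q_{2^n}$. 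Summing over $n\ge m$ yields $\sum_{k=2^m}^\infty k^{\alpha-1}q_k\le 2^{\alpha}\sum_{n=m}^\infty 2^{n\alpha}q_{2^n}$, which after dividing by $2^\alpha$ is exactly the left-hand inequality. All the estimates are valid in $[0,+\infty]$, so no convergence assumption is needed.

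The argument is entirely elementary; the only point that needs a little care is the two-sided bound for $k^{\alpha-1}$ over a dyadic block, whose precise form depends on the sign of $\alpha-1$ (forcing the trivial split into $\alpha\ge 1$ and $0\le\alpha<1$), together with the bookkeeping of constants, which must come out exactly as the $2^{-\alpha}$ and $2^{1+\alpha}$ appearing in the statement.
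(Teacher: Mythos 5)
Your proof is correct and follows essentially the same dyadic-block (Cauchy condensation) argument as the paper, using exactly the same blocks $\{2^{n-1}+1,\dots,2^n\}$ for the upper bound and $\{2^n,\dots,2^{n+1}-1\}$ for the lower bound. The only cosmetic difference is that the paper compares $2^{\alpha n}q_{2^n}$ with $2^{\alpha}j^{\alpha}q_j$ and then divides by the block length (thereby avoiding your case split on the sign of $\alpha-1$), but the estimates and constants are identical.
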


For $f\in \mathcal{A}$ define
\[
r_n:=r(1/n),\quad
W_n:=W(1/n),\quad
U_n:=U(1/n),\quad n\in \N.
\]

\begin{cor}\label{genL}
Let $f \in \mathcal A^+$ be given by \eqref{repf}.
Then there exist  $c >0$ and $m_0\in \N$ such that
\begin{equation}\label{L1C1}
c^{-1}\sum_{n=m_0}^\infty\frac{1}{nW_n+n^2 r_n}\le
\int_0^{1/m_0}
\frac{d\theta}{|1-f(\theta)|} \le c\sum_{n=m_0}^\infty\frac{1}{nW_n}.
\end{equation}
In particular, if $f$ is aperiodic and the right-hand side of \eqref{L1C1} is finite, then
$
1/(1-f) \in L^1[-\pi,\pi]$.
\end{cor}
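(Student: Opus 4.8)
\emph{Plan of proof.} The right‑hand (upper) estimate in \eqref{L1C1} is, essentially, Littlewood's bound in the quantitative form of Theorem \ref{lth2} specialized to $p=1$, while the left‑hand (lower) estimate only needs an elementary pointwise bound for $|1-f|$; the two local estimates will be glued together by a dyadic decomposition of $(0,1/m_0]$ and Proposition \ref{n2}. Throughout I would fix $m_0$ to be a power of $2$ strictly larger than $\min\{k:a_k>0\}$, so that $W(\theta_0)>0$ and $r(\theta_0)<1$ for all $\theta_0\in(0,1/m_0]$, and write $m_0=2^{j_0}$.

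For the upper estimate, observe that $R_f(\theta)=\theta/(1-f(\theta))$ and $\theta\ge\theta_0$ on $[\theta_0,2\theta_0]$, so Theorem \ref{lth2} with $p=1$, together with $\theta_0U(\theta_0)\le W(\theta_0)$ from \eqref{uw}, gives
\[
\int_{\theta_0}^{2\theta_0}\frac{d\theta}{|1-f(\theta)|}\le\frac1{\theta_0}\int_{\theta_0}^{2\theta_0}|R_f(\theta)|\,d\theta\le\frac{2C_1}{W(\theta_0)},\qquad \theta_0\in(0,1/m_0].
\]
Decomposing $(0,1/m_0]=\bigcup_{j>j_0}[2^{-j},2^{-j+1}]$ and applying this with $\theta_0=2^{-j}$, for which $W(\theta_0)=W_{2^j}$, gives $\int_0^{1/m_0}|1-f|^{-1}\le 2C_1\sum_{j>j_0}W_{2^j}^{-1}$. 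Since $n\mapsto nW_n$ is nondecreasing, $\tilde q_n:=1/(nW_n)$ is positive and decreasing and $W_{2^j}^{-1}=2^{j}\tilde q_{2^j}$; Proposition \ref{n2} with $\alpha=1$ then bounds $\sum_{j>j_0}2^{j}\tilde q_{2^j}$ by $4\sum_{n>m_0}\tilde q_n$, which yields the upper bound in \eqref{L1C1} with $c=8C_1$.

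For the lower estimate I would first prove, by splitting $1-f$ into real and imaginary parts and using $1-\cos x\le x^2/2$, $|\sin x|\le|x|$ for the low frequencies $k\le\lfloor 1/\theta_0\rfloor$ and $1-\cos\le 2$, $|\sin|\le1$ for the high ones, together with \eqref{uw}, the bound
\[
|1-f(\theta)|\le 4\bigl(\theta_0W(\theta_0)+r(\theta_0)\bigr),\qquad \theta\in[\theta_0,2\theta_0].
\]
Applying this with $\theta_0=1/\ell$ (so $W(\theta_0)=W_\ell$, $r(\theta_0)=r_\ell$, and $[1/\ell,1/(\ell-1)]\subset[\theta_0,2\theta_0]$) and integrating over the ``thin'' interval $[1/\ell,1/(\ell-1)]$, whose length $\tfrac1{\ell(\ell-1)}$ precisely cancels the $1/|1-f|$ bound, produces \emph{exactly} the summand on the left of \eqref{L1C1}:
\[
\int_{1/\ell}^{1/(\ell-1)}\frac{d\theta}{|1-f(\theta)|}\ge\frac1{4(\ell W_\ell+\ell^2 r_\ell)}.
\]
Summing over $\ell>m_0$, whose intervals tile $(0,1/m_0]$, gives $\int_0^{1/m_0}|1-f|^{-1}\ge\frac14\sum_{n>m_0}(nW_n+n^2r_n)^{-1}$. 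To replace $n>m_0$ by $n\ge m_0$ I would use the identity $W_n+nr_n=\sum_{k\ge1}\min(k,n)a_k$, which shows this quantity is nondecreasing in $n$ and satisfies $W_{m_0+1}+(m_0+1)r_{m_0+1}\le 2(W_{m_0}+m_0r_{m_0})$; hence the $n=m_0$ term is at most four times the $n=m_0+1$ term and $\sum_{n\ge m_0}\le 5\sum_{n>m_0}$, giving the lower bound with $c=20$. Taking $c=\max(8C_1,20)$ then serves both inequalities.

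Finally, for the ``in particular'' claim: if $f$ is aperiodic and the right-hand side of \eqref{L1C1} is finite, then $\int_0^{1/m_0}|1-f|^{-1}<\infty$ by what was just proved, $\int_{-1/m_0}^0|1-f|^{-1}$ equals it by the symmetry $|1-f(-\theta)|=|1-f(\theta)|$ (the $a_k$ are real), and $|1-f|$ is bounded below on the compact set $\{1/m_0\le|\theta|\le\pi\}$ since aperiodicity rules out $f(e^{i\theta})=1$ there; adding the three contributions gives $1/(1-f)\in L^1[-\pi,\pi]$. The only delicate points are matching the constants produced by Proposition \ref{n2} and keeping the indices $m_0=2^{j_0}$, $j$, $\ell$ straight; all the analytic content is already carried by Theorem \ref{lth2}, and the one observation that makes the index bookkeeping painless is the monotone representation $W_n+nr_n=\sum_k\min(k,n)a_k$.
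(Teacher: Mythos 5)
Your proposal is correct and follows essentially the same route as the paper: Theorem \ref{lth2} with $p=1$ on dyadic intervals plus Proposition \ref{n2} for the upper bound, an elementary pointwise bound $|1-f(\theta)|\lesssim \theta_0 W(\theta_0)+r(\theta_0)$ integrated over the intervals between consecutive reciprocals of integers for the lower bound, and aperiodicity with symmetry for the ``in particular'' claim. The only deviations (using $\alpha=1$ instead of $\alpha=0$ in Proposition \ref{n2}, and the $\min(k,n)$ patch to shift the starting index from $m_0+1$ to $m_0$, which the paper's indexing over $[1/(n+1),1/n]$ avoids) are cosmetic.
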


\begin{proof}
Choose $\Theta_0 \in (0,1]$ with $r(\Theta_0)< 1$.
By  Theorem \ref{lth2},
\begin{equation}\label{CppL1}
\int_{\theta_0}^{2\theta_0}
\frac{d\theta}{|1-f(\theta)|} \le
\theta_0^{-1}\int_{\theta_0}^{2\theta_0}
|R_f(\theta)| \,d\theta
\le \frac{c}{W(\theta_0)},\quad \theta_0\in (0,\Theta_0],
\end{equation}
for a constant $c>0.$
Fix
  $n_0\in \N$ such that
  \[
   m_0:=2^{n_0-1}\ge 1/\Theta_0,
   \]
   and note that $W_n > 0, n \ge m_0,$ and moreover $(W_n)_{n=m_0}^{\infty}$  monotonically increases.
Using (\ref{CppL1}) and Proposition \ref{n2},
we obtain:
\[
\int_0^{1/m_0}
\frac{d\theta}{|1-f(\theta)|}
=\sum_{n=n_0}^\infty
\int_{1/2^{n}}^{1/2^{n-1}}
\frac{d\theta}{|1-f(\theta)|}
\]
\[
\le c\sum_{n=n_0}^\infty
\frac{1}{W_{2^n}}\le 2c\sum_{n=2^{n_0-1}+1}^\infty
\frac{1}{nW_n}
 \le 2c\sum_{n=m_0}^\infty
\frac{1}{nW_n},
\]
that is, the right-hand side estimate in \eqref{L1C1} holds.
If the series $\sum_{n=m_0}^\infty{1}/(nW_n)$ converges, then since $f$ is aperiodic and $|R_f(\theta)|$ is symmetric in the sense that $|R_f(\theta)|=|R_f(-\theta)|,$
we infer that $
1/(1-f) \in L^1[-\pi,\pi].$

To prove the left-hand side estimate in \eqref{L1C1}, we note that if $n\in \N$ and $\theta \in (0,1]$ are such that
$n\le 1/\theta <n+1$,  then, using
(\ref{uw}), we have
\begin{align}\label{Ncond}
|1-f(\theta)|\le& \sum_{k=1}^\infty a_k(1-\cos k\theta)+
\left|\sum_{k=1}^\infty a_k\sin k\theta \right|
\\
\le& \frac{\theta_0^2}{2}U_n+\theta_0 W_n+
2r_n \notag\\
\le&  2( W_n/n+r_n).\notag
\end{align}
Therefore, from (\ref{Ncond})
it follows that there exists $m_0\in\N$ such that
\begin{align*}
\int_0^{1/m_0}\frac{d\theta}{|1-f(\theta)|}
=&\sum_{n=m_0}^\infty \int_{1/(n+1)}^{1/n}\frac{d\theta}{|1-f(\theta)|}
\ge \frac{1}{2}\sum_{n=m_0}^\infty
\frac{1}{W_n/n+r_n}
\int_{1/(n+1)}^{1/n}d\theta\\
=&
\frac{1}{2}\sum_{n=m_0}^\infty
\frac{1}{(W_n/n+r_n)n(n+1)}
\ge \frac{1}{4}\sum_{n=m_0}^\infty
\frac{1}{nW_n+n^2r_n}.
\end{align*}
(Since we do not need Theorem \ref{lth2} for the left-hand side estimate,
 a dyadic partition of $[0,1/m_0]$ is replaced with a partition, in a sense, more convenient
 for writing down the final estimate.)
\end{proof}

\begin{rem}\label{RemL1}
From (\ref{CppL1}) it follows that if
$f\in \mathcal{A}^{+}$ is given by \eqref{repf} and 
\[
\mu=\sum_{k=1}^\infty k a_k=\infty,
\]
then
\[
\lim_{\theta_0\to 0+}\,\int_{\theta_0}^{2\theta_0}
\frac{d\theta}{|1-f(\theta)|}=0,
\]
cf. Littlewood's result \eqref{Lpart}.

If $\mu<\infty,$
then
\begin{equation}\label{deltaL}
\int_0^\delta
\frac{d\theta}{|1-f(\theta)|}=\infty
\end{equation}
for every $\delta>0,$
since in this case
\[
W_n + n r_n=\sum_{k=1}^n k a_k + n\sum_{k=n+1}^\infty a_k
\le \mu, \qquad n \in \mathbb N,
\]
and
\[
\sum_{n=1}^\infty
\frac{1}{nW_n+n^2 r_n}\ge
\frac{1}{\mu}\sum_{n=1}^\infty \frac{1}{n}=\infty.
\]
On the other hand,  in this case (\ref{deltaL}) is a direct consequence of
$\lim_{\theta\to 0+}\,(f(\theta)-1)/\theta=\mu.$
\end{rem}
\begin{remark}
For aperiodic $f(\theta)=\sum_{k=1}^\infty a_k e^{ik\theta} \in  \mathcal A^+$,  let $F(z):=\sum_{k=1}^\infty a_k z^k, z \in \overline{\mathbb D}.$
Recall that $1/(1-F)$ is analytic in $\mathbb D$ and continuous in ${\mathbb D}\setminus\{1\}.$ 
If $1/(1-f)\in L^1[-\pi,\pi],$
then $1/(1-F) \in L^1(\mathbb T),$ hence 
$1/(1-F)$ belongs to the Hardy space $H^1(\mathbb D).$
If $1/(1-F(z))=\sum_{k=0}^{\infty} b_k z^k, z \in \mathbb D,$
then by Hardy's inequality (\cite[IX.9.7]{Ineq}),
\[
\sum_{n=1}^\infty n^{-1}b_n<\infty.
\]
\end{remark}

We pause now to illustrate Corollary \ref{genL} by the following example.

\begin{example}\label{eXL1}
Consider
\[
f_\epsilon(\theta):=c_\epsilon\sum_{k=1}^\infty
\frac{\log^\epsilon(k+1)}{k^2}e^{ik\theta},\qquad
f_\epsilon(0)=1,\quad \epsilon\ge 0.
\]
Then $f_\epsilon \in \mathcal{A}^{+},$ and it is aperiodic. For each $\epsilon>0$ we have
\[
W_n=\sum_{k=1}^n \frac{\log^\epsilon(k+1)}{k}\ge
c \log^{1+\epsilon} (n+1),
\]
hence   $1/(1-f_\epsilon) \in L^1[-\pi,\pi]$ by \eqref{L1C1}.
On the other hand, if $\epsilon=0$
then
\[
nW_n+n^2r_n
\le n\sum_{k=1}^n \frac{1}{k}+
n^2\sum_{k=n+1}^\infty \frac{1}{k^2}\le c n \log (n+1),
\]
for some constant $c>0,$ so  \eqref{L1C1} implies that  $1/(1-f_0) \not\in L^1[-\pi,\pi]$.
(Similarly,
if $f \in \mathcal{A}^{+}$ is given by
\[
f(\theta):=\sum_{k=1}^\infty \frac{e^{ik\theta}}{k(k+1)}=
1+(1-e^{-i\theta})\log (1-e^{i\theta}),
\]
then $f$ is aperiodic and
$1/(1-f) \not \in L^1[-\pi,\pi]$.)
\end{example}

Although, in general, $R_f \not \in L^p[-\pi,\pi], p >3,$ for $f \in {\mathcal A}^+,$ it is possible to formulate a sufficient
condition on the Fourier coefficients $f$ ensuring $R_f \in L^p[-\pi,\pi]$ for fixed $p > 2.$
(This way we may also produce $f \in {\mathcal A}^+$ such that $R_f \in \cap_{p >2} L^p[-\pi,\pi].$)
The next statement is a direct implication of Theorem \ref{lth2}.

\begin{cor}\label{CL2p10}
Let $f\in \mathcal{A}^+$ be aperiodic, and let $p >2.$
If there is $m_0\in \N$ such that
\begin{equation}\label{Mclll}
\sum_{n=m_0}^\infty \frac{n^{p-2}r_n^{p-1}}{
nW_n^{2p-1}+r_n^{p-1}U_n^p}<\infty,
\end{equation}
then $R_f \in L^p[-\pi,\pi]$.
\end{cor}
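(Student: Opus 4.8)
The plan is to reduce, via aperiodicity and the symmetry $|R_f(\theta)|=|R_f(-\theta)|$, to estimating $\int_0^{\delta}|R_f(\theta)|^p\,d\theta$ on one small interval at the origin; then to decompose that interval dyadically, apply Theorem \ref{lth2} on each dyadic block $[2^{-(n+1)},2^{-n}]$, and finally convert the resulting dyadic series into the series in \eqref{Mclll} by means of Proposition \ref{n2}.

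First I would dispose of the region away from the origin. Since $f$ is aperiodic and continuous, $f(\theta)=1$ with $\theta\in[-\pi,\pi]$ forces $\theta=0$, so $\inf_{\delta\le|\theta|\le\pi}|1-f(\theta)|>0$ for each $\delta>0$; hence $R_f$ is bounded, a fortiori in $L^p$, on $\{\theta:\delta\le|\theta|\le\pi\}$, and by symmetry it is enough to estimate $\int_0^{\delta}|R_f(\theta)|^p\,d\theta$ for a single suitable $\delta>0$. If $r_n=0$ for some $n$, then $f$ is a polynomial and $1-f(\theta)=-\ui\mu\theta+O(\theta^2)$ near $0$ with $\mu=\sum_k ka_k\in(0,\infty)$, so $R_f$ is bounded near $0$ too and there is nothing to prove; thus I may assume $r_n>0$, and consequently $W_n,U_n>0$, for all large $n$. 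Enlarging the index $m_0$ in \eqref{Mclll} if necessary (discarding finitely many nonnegative terms does not affect convergence), I fix it so that $W_{m_0}>0$, and then pick $n_0\in\N$ with $2^{n_0}\ge m_0$ and $r(\theta)<1$ for $0<\theta\le 2^{-n_0}=:\delta$, which is possible since $r(\theta)\to 0$ as $\theta\to 0^+$.

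Next, writing $\int_0^{\delta}|R_f(\theta)|^p\,d\theta=\sum_{n\ge n_0}\int_{2^{-(n+1)}}^{2^{-n}}|R_f(\theta)|^p\,d\theta$, applying Theorem \ref{lth2} on each block with $\theta_0=2^{-(n+1)}$ (legitimate since $r(\theta_0)<1$), and observing that $1/\theta_0=2^{n+1}$ makes $r(\theta_0),W(\theta_0),U(\theta_0)$ equal to $r_{2^{n+1}},W_{2^{n+1}},U_{2^{n+1}}$, I obtain, after the substitution $m:=n+1$,
\[
\int_0^{\delta}|R_f(\theta)|^p\,d\theta\le C_p\sum_{m\ge n_0+1}\left(\frac{2^{-m}}{W_{2^m}^p}+\frac{2^{m(p-2)}\,r_{2^m}^{p-1}}{W_{2^m}^{2p-1}+2^{-m}\,r_{2^m}^{p-1}\,U_{2^m}^p}\right).
\]
The first sum is finite because $W_{2^m}\ge W_{2^{n_0+1}}>0$ and $\sum_m 2^{-m}<\infty$. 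For the second, multiplying numerator and denominator of the $m$-th term by $2^m$ turns it into $2^{(p-1)m}q_{2^m}$, where $q_n:=r_n^{p-1}\big/\bigl(nW_n^{2p-1}+r_n^{p-1}U_n^p\bigr)$. One checks that $(q_n)_{n\ge m_0}$ is positive and non-increasing: $1/q_n=nW_n^{2p-1}r_n^{1-p}+U_n^p$ is a sum of non-decreasing sequences, each factor $n$, $W_n^{2p-1}$, $r_n^{1-p}$, $U_n^p$ being nonnegative and non-decreasing because $W_n,U_n$ increase, $r_n$ decreases and $2p-1,\,p-1>0$. Hence Proposition \ref{n2} with $\alpha=p-1$ gives
\[
\sum_{m\ge n_0+1}2^{(p-1)m}q_{2^m}\le 2^p\sum_{n\ge 2^{n_0}+1}n^{p-2}q_n\le 2^p\sum_{n\ge m_0}\frac{n^{p-2}r_n^{p-1}}{nW_n^{2p-1}+r_n^{p-1}U_n^p}<\infty,
\]
the last step using $2^{n_0}+1>m_0$ together with \eqref{Mclll}. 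Combining the two pieces gives $\int_0^{\delta}|R_f(\theta)|^p\,d\theta<\infty$, and with the first step $R_f\in L^p[-\pi,\pi]$.

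The only point that requires a genuine argument rather than pure bookkeeping is the monotonicity of $(q_n)$ needed to invoke Proposition \ref{n2}; everything else — the dyadic decomposition, the identifications $r(2^{-m})=r_{2^m}$, $W(2^{-m})=W_{2^m}$, $U(2^{-m})=U_{2^m}$, and the index shift $m=n+1$ — is routine, which is exactly why the statement is advertised as a direct implication of Theorem \ref{lth2}.
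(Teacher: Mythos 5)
Your proof is correct and follows essentially the same route as the paper: reduce via aperiodicity and symmetry to a neighbourhood of $0$, decompose dyadically, apply Theorem \ref{lth2} on each block, and convert the dyadic series to \eqref{Mclll} by Proposition \ref{n2}. Your extra touches (disposing of the polynomial case $r_n=0$, bounding the first dyadic sum directly, and verifying the monotonicity of $q_n$ needed for Proposition \ref{n2}) are minor refinements of details the paper leaves implicit.
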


\begin{proof}
By aperiodicity $f$ and symmetry of $|R_f(\theta)|$
it suffices to prove that
\begin{equation}\label{pint}
\int_0^\delta |R_f(\theta)|^p\,d\theta<\infty
\end{equation}
for some $\delta>0$.
Using (\ref{Mclll}), Theorem \ref{lth2}
and Proposition \ref{n2},
we infer that there exists $c>0$ such that for large enough $n_0$
and $m_0=2^{n_0-1}+1$:
\begin{align*}
\int_0^{1/2^{n_0-1}}
|R_f(\theta)|^p\,d\theta
&=\sum_{n=n_0}^\infty
\int_{1/2^n}^{1/2^{n-1}}
|R_f(\theta)|^p\,d\theta\\
&\le c\sum_{n=1/2^{n_0}}^\infty
\left[\frac{1}{2^{n}W_{2^n}^p}
+\frac{2^{(p-1)n}r_{2^n}^{p-1}}
{2^nW_{2^n}^{2p-1}+r_{2^n}^{p-1}U_{2^n}^p}\right]\\
&\le c 2^p\sum_{n=m_0}^\infty
\left[\frac{1}{n^2W_n^p}
+\frac{n^{p-2}r_n^{p-1}}{nW_n^{2p-1}+r_n^{p-1}U_n^p}\right]<\infty.
\end{align*}
\end{proof}

Further we will make use of Theorem \ref{lth2} to describe the regularity $R_f$ with respect to the $L^p$-scale
for arbitrary  $f\in \mathcal A^+(\nu), \nu >0.$

\begin{cor}\label{CL2p}
Let $\nu\in (0,1),$ and $f\in \mathcal{A}^+(\nu)$  be aperiodic.
Then
 $R_f \in  L^p[-\pi,\pi],$ where $p=1+\frac{1}{1-\nu}.$
On the other hand, for any $\nu\in [0,1)$ and
\[
p\in \left(2+\frac{1}{1-\nu},\infty\right)
\]
there exists an aperiodic function $f\in \mathcal{A}^+(\nu)$ such that
$R_f \not\in L^p[-\pi,\pi].$
\end{cor}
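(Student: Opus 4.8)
The statement has two parts, a positive one and a negative one, and they rely on the two sides of Theorem \ref{lth2} together with the dyadic reformulation in the style of Corollary \ref{CL2p10}. For the positive part, fix $\nu\in(0,1)$, set $p=1+\frac{1}{1-\nu}$, and recall that $f\in\mathcal A^+(\nu)$ means $\sum_k k^\nu a_k<\infty$. The plan is to bound $W_n=\sum_{1\le k\le n}ka_k$ from below in a useful way (or rather to bound its reciprocal) using $\sum_k k^\nu a_k<\infty$: since $1-r_n=\sum_{1\le k\le n}a_k\to1$, we have $W_n\ge \tfrac12$ for $n$ large, but this crude bound is not enough; instead I would keep track of the interplay between $W_n$ and $r_n$. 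The cleanest route is to run the dyadic estimate from Corollary \ref{CL2p10}: it suffices to verify $\sum_n \frac{n^{p-2}r_n^{p-1}}{nW_n^{2p-1}+r_n^{p-1}U_n^p}<\infty$ (the other summand $\sum 1/(n^2W_n^p)$ converges trivially since $W_n\ge\tfrac12$ eventually). Drop the $U_n$ term in the denominator and bound $W_n\ge\tfrac12$; then it suffices to show $\sum_n n^{p-2}r_n^{p-1}<\infty$. Now $r_n=\sum_{k>n}a_k\le n^{-\nu}\sum_{k>n}k^\nu a_k=o(n^{-\nu})$, so $n^{p-2}r_n^{p-1}=o\!\left(n^{p-2-\nu(p-1)}\right)$, and the exponent is $p-2-\nu(p-1)$; with $p-1=\frac{1}{1-\nu}$ this equals $\frac{1}{1-\nu}-1-\frac{\nu}{1-\nu}=\frac{1-(1-\nu)-\nu}{1-\nu}=0$, so $n^{p-2}r_n^{p-1}=o(1/n)$ only if we use that $n^\nu r_n\to0$ rather than merely being bounded — summability of $\sum k^\nu a_k$ gives $\sum_n n^{\nu-1}\cdot(n^\nu r_n)$-type tail control, so a short Abel/tail argument (or Proposition \ref{n2} applied with $\alpha$ chosen appropriately) closes it. I expect the bookkeeping with the exact exponent $p=1+\frac1{1-\nu}$, ensuring one genuinely gets convergence and not just a borderline divergent series, to be the delicate point, and one should use $o(\cdot)$ rather than $O(\cdot)$ for the tail $r_n$ throughout.

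For the negative part, fix $\nu\in[0,1)$ and $p>2+\frac1{1-\nu}$; I want to construct an aperiodic $f\in\mathcal A^+(\nu)$ with $R_f\notin L^p[-\pi,\pi]$. The natural source is Corollary \ref{cseq} (or Theorem \ref{G}/\ref{G11}) with a suitable gauge $\varphi$: these produce $f\in\mathcal A^+(\nu)$ and a sequence $\theta_m\downarrow0$ with $|1-f(\theta)|\le 10\varphi(\theta_m)\theta_m^{2-\nu}$ on an interval around $\theta_m$ of length $\asymp\varphi^{3/2}(\theta_m)\theta_m^{3-2\nu}$. On that interval $|R_f(\theta)|=|\theta|/|1-f(\theta)|\gtrsim \theta_m/(\varphi(\theta_m)\theta_m^{2-\nu})=\varphi(\theta_m)^{-1}\theta_m^{\nu-1}$, hence

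\begin{equation*}
\int_{|\theta-\theta_m|\le \varphi^{3/2}(\theta_m)\theta_m^{3-2\nu}}|R_f(\theta)|^p\,d\theta
\gtrsim \varphi^{3/2}(\theta_m)\theta_m^{3-2\nu}\cdot \varphi(\theta_m)^{-p}\theta_m^{(\nu-1)p}
=\varphi(\theta_m)^{3/2-p}\,\theta_m^{(3-2\nu)+(\nu-1)p}.
\end{equation*}

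So I must choose a continuous $\varphi$ satisfying \eqref{phicond} — that is, $\varphi(\theta)\to\infty$ and $\theta^{1-\nu}\varphi(\theta)\to0$ as $\theta\to0$ — such that the quantity $\varphi(\theta_m)^{3/2-p}\theta_m^{(3-2\nu)+(\nu-1)p}$ does not tend to zero; since the $\theta_m$-intervals are disjoint for $m$ large, this forces the $L^p$-integral to be infinite. The exponent of $\theta_m$ is $(3-2\nu)+(\nu-1)p = (3-2\nu)-(1-\nu)p$; with $p>2+\frac1{1-\nu}=\frac{2(1-\nu)+1}{1-\nu}=\frac{3-2\nu}{1-\nu}$ we get $(1-\nu)p>3-2\nu$, so this exponent is strictly negative, i.e.\ $\theta_m^{(3-2\nu)-(1-\nu)p}\to\infty$. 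Meanwhile $\varphi(\theta_m)\to\infty$ and $3/2-p<0$, so $\varphi(\theta_m)^{3/2-p}\to0$, and the product is of the form $(\text{large})\cdot(\text{small})$. One therefore chooses $\varphi$ growing slowly enough that the $\theta_m$-blowup wins: take $\varphi(\theta)=\big(\log(1/\theta)\big)^\beta$ (or any slowly varying function) with $\beta>0$ small; this satisfies \eqref{phicond} for every $\nu\in[0,1)$, and $\varphi(\theta_m)^{3/2-p}=(\log(1/\theta_m))^{\beta(3/2-p)}$ decays only polylogarithmically while $\theta_m^{(3-2\nu)-(1-\nu)p}$ grows like a fixed negative power of $\theta_m$, so the product $\to\infty$. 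Hence each block of the integral is $\ge 1$ (for large $m$), the blocks are disjoint, and $\int_0^\delta|R_f|^p=\infty$; by symmetry and aperiodicity of $f$ (which we may assume by Theorem \ref{G} or by perturbing within the aperiodic open dense set, cf. Proposition \ref{Open}), $R_f\notin L^p[-\pi,\pi]$.

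The main obstacle I anticipate is keeping the two exponent computations honest — in particular verifying that the critical value $p=1+\frac1{1-\nu}$ really lands on the convergent side in the positive part (which needs the genuine $o(n^{-\nu})$ decay of $r_n$, not merely $O(n^{-\nu})$, and possibly a logarithmic safety margin coming from summability of $\sum k^\nu a_k$), and checking that in the negative part the gap $p>2+\frac1{1-\nu}$ is exactly what makes the $\theta_m$-exponent $(3-2\nu)-(1-\nu)p$ negative so that a slowly growing $\varphi$ suffices. Everything else — disjointness of the intervals $\{|\theta-\theta_m|\le\varphi^{3/2}(\theta_m)\theta_m^{3-2\nu}\}$ for large $m$ (which holds since their lengths are $o(\theta_m)$ and $\theta_m\downarrow0$), the lower bound $|R_f|\gtrsim\theta_m/(\varphi(\theta_m)\theta_m^{2-\nu})$ on each, the reduction to a neighbourhood of $0$ by aperiodicity and the symmetry $|R_f(\theta)|=|R_f(-\theta)|$ — is routine given the earlier results.
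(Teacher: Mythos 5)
Your negative half is correct and is essentially the paper's own argument: the paper also feeds Theorem \ref{G} a gauge satisfying \eqref{phicond} (it takes $\varphi(\theta)=\theta^{-\epsilon}$ with $\epsilon>0$ small, where you take a logarithmic $\varphi$; either choice works) and blows up the $L^p$-integral over the intervals around $\theta_m$, the whole point being your computation that $(1-\nu)p>3-2\nu$ exactly when $p>2+\frac{1}{1-\nu}$.

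The positive half, however, has a genuine gap at its only delicate step. First, the reduction is misstated: dropping the $U_n$-term and using $W_n\ge\frac12$ in the denominator $nW_n^{2p-1}+r_n^{p-1}U_n^p$ of \eqref{Mclll} leaves the target $\sum_n n^{p-3}r_n^{p-1}<\infty$, not $\sum_n n^{p-2}r_n^{p-1}<\infty$; the latter is actually \emph{false} in general under $\sum_k k^\nu a_k<\infty$ (take $a_k\asymp k^{-1-\nu}(\log k)^{-2}$, suitably normalized: then $r_n\asymp n^{-\nu}(\log n)^{-2}$ and $n^{p-2}r_n^{p-1}\asymp(\log n)^{-2(p-1)}$, which is not summable), so the plan as written cannot be completed. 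Second, even for the correct target the mechanism you propose --- upgrading $r_n=O(n^{-\nu})$ to $r_n=o(n^{-\nu})$ --- is insufficient: it only gives $n^{p-3}r_n^{p-1}=n^{-1}(n^\nu r_n)^{p-1}=o(1/n)$, and $o(1/n)$ terms need not be summable; since $p=1+\frac{1}{1-\nu}$ is exactly critical, a genuine inequality is needed here, and this is precisely what you defer to an unspecified ``Abel/tail argument''. The paper closes this step with the Hardy--Littlewood--P\'olya inequality \cite[Thm.~165]{Ineq}, which yields $\sum_n n^{p-3}r_n^{p-1}\le c\sum_n a_n n^{(p-2)/(p-1)}=c\sum_n a_n n^{\nu}<\infty$. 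An elementary alternative that would also close your argument: since $n^\nu r_n\le\sum_{k>n}k^\nu a_k\to0$ and $p-1>1$, one has $(n^\nu r_n)^{p-1}\le n^\nu r_n$ for all large $n$, hence $\sum_n n^{p-3}r_n^{p-1}\le C+\sum_n n^{\nu-1}r_n\le C+C'\sum_k k^\nu a_k<\infty$, the last step by interchanging the order of summation. One of these (or an equivalent) must be supplied; without it the first assertion of the corollary is not proved.
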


\begin{proof}
Let us prove the first claim. Suppose $f(\theta)=\sum_{k=1}^\infty a_k e^{ik\theta}, \theta \in [-\pi,\pi].$ Again, by aperiodicity of $f$ and symmetry of $|R_f(\theta)|$
it suffices to show that that \eqref{pint} holds for some $\delta>0$
and $p=1+1/(1-\nu).$
Choose $n_0\in \N$ so large that
$W_n\ge 1/2$, $n\ge n_0.$ Then,  taking into account \cite[Thm. 165]{Ineq}, we conclude that
there are constants $c>0$ and $\tilde{c}>0$ such that
\begin{align*}
\sum_{n=n_0}^\infty \frac{n^{p-2}r_n^{p-1}}{
nW_n^{2p-1}+r_n^{p-1}U_n^p}\le&
2^{2p-1}\sum_{n=n_0}^\infty n^{p-3}
\left(\sum_{k=n+1}^\infty a_k\right)^{p-1}\\
\le& c
\sum_{
n=1}^\infty a_n
\left(\sum_{k=1}^n k^{p-3}\right)^{1/(p-1)}\\
\le& \tilde{c}
\sum_{n=1}^\infty a_n
n^{(p-2)/(p-1)}\\
=&\tilde{c}
\sum_{n=1}^\infty a_n
n^\nu<\infty.
\end{align*}
Thus, Corollary \ref{CL2p10} implies that
 $R_f \in L^p[-\pi,\pi].$

Second, to prove the negative result,
let $p \in \left(2+\frac{1}{1-\nu},\infty\right)$ be fixed.
Write
\begin{equation*}\label{ppp}
p=2+\frac{1}{1-\nu}+\delta,
\end{equation*}
and for a fixed $\epsilon\in (0,1-\nu)$
define a continuous function
\[
\varphi(\theta):=\theta^{-\epsilon},\qquad \theta\in (0,1].
\]
Since $\varphi$ satisfies (\ref{phicond}),
Theorem \ref{G} implies that
there exists an aperiodic  $f=f_\epsilon\in\mathcal{A}^+(\nu)$  and a decreasing sequence $(\theta_m)_{m=1}^{\infty}\subset (0,1]$ tending to zero such that
\[
|1-f(\theta)|\le c \theta_m^{2-\nu-\epsilon}
\]
whenever $|\theta-\theta_m|\le \theta_m^{3-2\nu}.$
Setting $$\beta=(1-\nu-\epsilon)p-3+2\nu$$ we have
\begin{align}\label{blow}
\int_{\theta_m}^{\theta_m+\theta_m^{3-2\nu}}
\frac{\theta^p\,d\theta}{|1-f(\theta)|^p}
\ge \frac{c}{\theta_m^{p(2-\nu-\epsilon)}}
\int_{\theta_m}^{\theta_m+\theta_m^{3-2\nu}}
\,\theta^p\,d\theta
\ge
\frac{c}{\theta_m^\beta}
\end{align}
for some constant $c>0.$
Note that
\[
\beta=(1-\nu)\delta-\epsilon(3-2\nu+\delta),
\]
and choose $\epsilon$ such that $\beta>0,$ that is
\[
0<\epsilon <\frac{(1-\nu)\delta}{3-2\nu+\delta}.
\]
As $\theta_m\to 0$, $m\to\infty$, the right-hand side of \eqref{blow} tends to infinity
as $m\to\infty,$
hence $R_f \not \in L^p[-\pi,\pi].$
\end{proof}

Finally, as a  consequence of Corollary \ref{CL2p}, we derive a result on the regularity of $R_f$ measured
in terms of $L^p$-spaces. The result  corresponds formally to the case $\nu=0$ in Corollary \ref{CL2p} and should be compared
to the property \eqref{BEQ} discussed by Erd\H{o}s, de Bruijn and Kingman. While its positive part is elementary, it was apparently overlooked
by specialists in probability theory.

\begin{cor}
Let $f(\theta)=\sum_{k=1}^\infty a_k e^{ik\theta}\in \mathcal A^+$ be aperiodic, and let $(b_k)_{k=0}^{\infty}$ be a renewal sequence associated to $(a_k)_{k=1}^{\infty}.$
Then $R_f \in L^2[-\pi,\pi]$ and
\begin{equation}\label{pars}
\sum_{k=0}^{\infty} (b_{k+1}-b_k)^2<\infty.
\end{equation}
At the same time, there exists an aperiodic $f \in \mathcal A^+$ such that
 $R_f \not \in L^p[-\pi,\pi]$ for every $p \in (3,\infty].$
\end{cor}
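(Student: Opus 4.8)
The plan is to prove the two assertions by entirely different means: the negative one is a short deduction from Theorem \ref{G}, while the positive one first gets reformulated via Parseval and then rests on a delicate dyadic summation.

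\textit{The negative part.} I would apply Theorem \ref{G} with $\nu=0$ and a \emph{slowly} growing weight, e.g. $\varphi(\theta)=\log(1/\theta)$, which satisfies \eqref{phicond} with $\nu=0$. This yields an aperiodic $f\in\mathcal A^{+}$ together with $\theta_m\downarrow0$ such that $|1-f(\theta)|\le 10\varphi(\theta_m)\theta_m^{2}$ for all $\theta$ with $|\theta-\theta_m|\le\varphi^{3/2}(\theta_m)\theta_m^{3}$. Then, for every $p>3$,
\[
\int_{\theta_m}^{\theta_m+\varphi^{3/2}(\theta_m)\theta_m^{3}}|R_f(\theta)|^{p}\,d\theta
\ \ge\ \frac{\theta_m^{\,p}\cdot\varphi^{3/2}(\theta_m)\theta_m^{3}}{\bigl(10\varphi(\theta_m)\theta_m^{2}\bigr)^{p}}
\ =\ \frac{\theta_m^{\,3-p}}{10^{p}\varphi(\theta_m)^{\,p-3/2}}\ \longrightarrow\ \infty,
\]
because the power $\theta_m^{\,3-p}\to\infty$ overwhelms the polylogarithmic factor $\varphi(\theta_m)^{\,p-3/2}$; since these intervals are pairwise disjoint for large $m$, this forces $R_f\notin L^{p}[-\pi,\pi]$ for every $p\in(3,\infty)$. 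For $p=\infty$ one only notes $|R_f(\theta_m)|\ge\bigl(10\varphi(\theta_m)\theta_m\bigr)^{-1}\to\infty$. Using $\varphi=\log$ rather than a power $\theta^{-\epsilon}$, as in Corollary \ref{CL2p}, is exactly what makes a \emph{single} $f$ serve all $p>3$ simultaneously (a power $\theta^{-\epsilon}$ only handles $p>3/(1-\epsilon)$).

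\textit{The positive part.} First I would reformulate it. Since $F\ne1$ on $\mathbb D$, the function $T(z):=\frac{1-z}{1-F(z)}=(1-z)G(z)$ is holomorphic on $\mathbb D$ with $T(z)=b_0+\sum_{k\ge1}(b_k-b_{k-1})z^{k}$, so $\sum_k(b_{k+1}-b_k)^{2}<\infty$ is equivalent to $T\in H^{2}(\mathbb D)$. As $G$ has positive real part it lies in the Smirnov class, hence so does $T$, and therefore $T\in H^{2}$ precisely when its boundary function belongs to $L^{2}(\mathbb T)$. For aperiodic $f$ that boundary function equals $\frac{1-e^{i\theta}}{1-f(\theta)}$ off $\theta=0$, and $|1-e^{i\theta}|\asymp|\theta|$ on $[-\pi,\pi]$, so $T\in H^{2}\iff R_f\in L^{2}[-\pi,\pi]$. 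Since $|R_f|$ is even and, by aperiodicity, bounded on $[\delta,\pi]$, the whole matter reduces to $\int_0^{\delta}|R_f|^{2}\,d\theta<\infty$. I would obtain this from Theorem \ref{lth2} with $p=2$ applied on the dyadic intervals $[2^{-n},2^{1-n}]$: writing for brevity $r_n=r(2^{-n})$, $W_n=W(2^{-n})$, $U_n=U(2^{-n})$ and choosing $n_0$ with $r(2^{-n_0})<\tfrac12$ (so that $W_n,U_n\ge\tfrac12$, $U_n\ge W_n$ and $2^{-n}U_n\le W_n$ for $n\ge n_0$), one gets
\[
\int_0^{2^{1-n_0}}|R_f|^{2}\,d\theta\ \le\ C_2\sum_{n\ge n_0}\Bigl(\frac{2^{-n}}{W_n^{2}}+\frac{r_n}{W_n^{3}+2^{-n}r_nU_n^{2}}\Bigr).
\]
The first series is geometric, hence convergent.

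The second series $\Sigma=\sum_{n}\frac{r_n}{W_n^{3}+2^{-n}r_nU_n^{2}}$ is the technical heart of the argument. Each summand is at most $\min\bigl(8r_n,\ 2^{n}/U_n^{2}\bigr)$, and neither bound is summable on its own: the point is that the two bounds compensate because the tails $r_n$ and the truncated second moments $U_n$ are built from the same $(a_k)$ with $\sum_ka_k=1$. Concretely, from $r_{i-1}-r_i\le 2^{2-2i}(U_i-U_{i-1})$ together with $2^{-2i}U_i\to0$ one gets, by summation by parts, $r_n\le 3\sum_{i>n}2^{-2i}U_i$ with $\sum_i2^{-2i}U_i<\infty$; splitting the indices $n$ according to the size of $2^{-n}U_n^{2}$ and feeding in these inequalities should dominate $\Sigma$ by two convergent series. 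I expect this bookkeeping of scales — making the interplay between $r_n$ and $U_n$ close up cleanly — to be the only real obstacle; granted Theorems \ref{lth2} and \ref{G}, everything else (the Parseval reformulation, the negative part) is routine.
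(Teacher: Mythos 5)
Your negative part is correct: applying Theorem \ref{G} with $\nu=0$ and a logarithmic weight (after the trivial adjustment $\varphi(\theta)=\log(2/\theta)$, so that $\varphi>0$ on $(0,1]$) does produce a single aperiodic $f$ with $R_f\notin L^p[-\pi,\pi]$ for all $p\in(3,\infty]$, and this is in fact a cleaner route than the paper's, which simply cites Corollary \ref{CL2p}, where the constructed function depends on $p$ through the choice of $\epsilon$. Your Smirnov-class/Parseval reformulation of \eqref{pars} is also fine and matches the paper's last line in substance.

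The positive part, however, has a genuine gap, and the route you chose cannot be repaired. The ``technical heart'' you postpone --- finiteness, for \emph{every} $f\in\mathcal A^{+}$, of $\Sigma=\sum_n \frac{r_n}{W_n^{3}+2^{-n}r_nU_n^{2}}$ (with $r_n=r(2^{-n})$, etc.) --- is simply false. Take lacunary masses $a_{k_j}=p_j$ with $p_j\asymp j^{-2}$ and positions $k_j$ growing so fast that for $2^{n}\in[k_j,k_{j+1})$ one has $r_n\asymp 1/j$, $W_n\asymp k_jp_j$, $U_n\asymp k_j^{2}p_j$. The two parts of the denominator balance at $2^{n}\asymp jk_j$, and for $2^{n}\ge jk_j$ each summand is $\asymp r_n/W_n^{3}\asymp j^{5}/k_j^{3}$; choosing $k_{j+1}\ge jk_j\,2^{\lceil k_j^{3}/j^{5}\rceil}$ (towers are allowed) makes every dyadic block contribute at least a fixed constant, so $\Sigma=\infty$, even though the corollary (proved differently) guarantees $\int_0^{1}|R_f|^{2}\,d\theta<\infty$. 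In other words, Theorem \ref{lth2} is an upper bound that is too lossy at $p=2$ without extra summability hypotheses: it is designed for Corollaries \ref{genL}--\ref{CL2p}, where conditions such as \eqref{nu1} tame exactly the term $r^{p-1}/W^{2p-1}$. Your partial inequalities ($r_n\le 3\sum_{i>n}2^{-2i}U_i$ and $\sum_i 2^{-2i}U_i\lesssim 1$) are correct but cannot bridge this. The missing idea in the paper's proof is of a different nature: $\mathrm{Re}\,(1-F)^{-1}$ is a positive harmonic function on $\D$, so by Fatou its boundary values lie in $L^{1}(\mathbb T)$ with integral at most $2\pi$, while pointwise near $\theta=0$, for any $n_0$ with $a_{n_0}>0$, $\mathrm{Re}\,\frac{1}{1-F(e^{i\theta})}=\frac{\mathrm{Re}\,(1-F)}{|1-F|^{2}}\ge a_{n_0}\frac{1-\cos n_0\theta}{|1-F|^{2}}\ge c\,|R_f(\theta)|^{2}$; aperiodicity then gives $R_f\in L^{2}[-\pi,\pi]$, and Parseval yields \eqref{pars}. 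It is this $L^{2}$ bound itself that your argument does not deliver.
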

\begin{proof}
The second claim follows directly from Corollary \ref{CL2p}.
%If $f\mathcal A^+$ is given by \eqref{repf},  
Let $F(z):=\sum_{k=1}^\infty a_k z^k, z \in \overline{\mathbb D},$ 
so that $f(\theta)=F(e^{i\theta}), \theta \in [-\pi,\pi].$ Then
 $1/(1-F(z))=\sum_{k=0}^{\infty} b_k z^k, z\in \D.$ Note that
${\rm Re}\, (1/(1-F))\in L^1(\mathbb T)$ since
$$
\int_{-\pi}^{\pi} {\rm Re}\, \frac{1}{1-F(e^{i\theta})} \, d\theta \le \limsup_{r \to 1} \int_{-\pi}^{\pi} {\rm Re}\, \frac{1}{1-F(re^{i\theta})}\, d\theta=2\pi
$$
by Fatou's Lemma and positivity of the harmonic function ${\rm Re}\, (1/(1-F))$
in $\D$,
cf. \cite[p. 10-12]{Kingman}.
On the other hand, if $n_0$ is such that $a_{n_0}\not = 0$ and  $\theta \in [-1/n_0,1/n_0],$ then
$$
{\rm Re} \, (1-F(e^{i\theta}))\ge a_{n_0}(1-\cos n_0 \theta)\ge \frac{2\theta^2}{\pi^2}n_0^2 a_{n_0}
$$
and
$$
 {\rm Re}\, \frac{1}{1-F(e^{i\theta})}=\frac{{\rm Re} \, (1-F(e^{i\theta}))}{|1-F(e^{i\theta})|^2} \ge  2a_{n_0}\left(\frac {n_0}{\pi}\right)^2
 (R_f(\theta))^2.
$$
Thus  $R_f \in L^2[-1/n_0, 1/n_0],$
and, if $f$ is aperiodic, then $R_f\in L^2[-\pi,\pi].$
As the latter property is equivalent to $(1-z)(1-F)^{-1} \in L^2(\mathbb T)$, Parseval's identity yields \eqref{pars}.
\end{proof}

\begin{rem}\label{measure}
For $f \in \mathcal A^+$ and $G_\epsilon:=\{\theta \in [-\pi,\pi]:  {\rm Re} \, f (\theta) \ge 1-\epsilon \}$ there are several estimates in the literature of the form
$$
{\rm meas}\, (G_\epsilon)
\le C \sqrt \epsilon
$$
with an absolute constant $C>0,$ see e.g. \cite{Ball}, \cite{Des}, \cite{Des2},  \cite{Yudin}, \cite{YudinP}. The estimates are
motivated by applications in probability theory and number theory, and they seem to be weaker than the estimate \eqref{lth1}
provided by Theorem \ref{Lth}. (A related bound for  $f$ in terms of its coefficients has been given in \cite{Bened}.)
To clarify their relations to our treatment,
for $\epsilon \in (0,1]$ and $f\in \mathcal{A}^+$ define
$$ E_\epsilon:=\{\theta\in [-\pi,\pi]:
|1-  \, f(\theta)| \le \epsilon \}.$$
Note that
$$
{\rm meas}\, (E_\epsilon) \le {\rm meas}\, (G_\epsilon) \le C \sqrt \epsilon.
$$
On the other hand, if $f$
is  aperiodic then Littlewood's theorem \eqref{aper} implies that
 for every $\alpha\in (0,1)$ there exists  $c_\alpha(f)>0$ such that
\begin{equation}\label{epsilon}
{\rm meas} \,(E_\epsilon)\le c_\alpha (f) \epsilon^\alpha.
\end{equation}
Indeed, let $\alpha\in (0,1)$ be fixed.
Then, taking
 $\gamma\in (0,1-\alpha)$ and using \eqref{aper}, we have
\begin{align*}
\epsilon^{-\alpha}{\rm meas }\,(E_\epsilon) \le&  \int_{E_\epsilon} \frac{d\theta}{|1-f(\theta)|^\alpha}\le
\int_{-\pi}^\pi \frac{d\theta}{|1-f(\theta)|^\alpha}\\
\le& \left(\int_{-\pi}^\pi \frac{\theta^{\gamma/\alpha}\,d\theta}
{|1-f(\theta)|}\right)^{\alpha}
\left(\int_0^\pi \frac{d\theta}{\theta^{\gamma/(1-\alpha)}}
\right)^{1-\alpha}=:c_\alpha(f),
\end{align*}
and (\ref{epsilon}) follows.
Thus \eqref{aper}  gives asymptotically better bounds for ${\rm meas}\, (E_\epsilon).$ However, it is not clear
whether the constant $c_\alpha(f)$
can be taken independent of $f.$
\end{rem}

\section{Remarks on $p$-functions}

The theory of $p$-functions can be considered as a continuous counterpart of the theory of renewal sequences.
Its basic facts can be found in \cite{Kingman}.
To put the relevant considerations on $p$-functions into our setting, let us first recall a couple of basic facts.

Let a function $\varphi$ in the right half-plane $\mathbb C_+:=\{z\in \mathbb C: {\rm Re}\, z >0\}$  be defined as
\begin{equation}\label{Form}
\varphi(z)=z+c+\int_{(0,\infty)} (1-e^{-zt})\,\nu(dt),
\end{equation}
 where $c\ge 0,$ and $\nu$ is a positive Borel measure on $(0,\infty)$ satisfying
\[
\int_{(0,\infty)}
\frac{t\,\nu(dt)}{1+t}<\infty.
\]
(It is easy to see that $\varphi$ is analytic in $\mathbb C_+$ and continuous in $\overline{\mathbb C}_+.$)
Then there is a unique continuous function $g: [0,\infty)\mapsto [0,1], g(0)=1,$ called a standard $p$-function, such that
the Laplace transform $\mathcal L g$ of $g$ can be represented as
\begin{equation}\label{Pf}
(\mathcal L g)(z):= \int_0^\infty e^{-zt} g(t)\,dt=\frac{1}{\varphi(z)},
\end{equation}
for $z >0.$ Observe that if $g$ is a standard $p$-function, then $\mathcal L g$ extends analytically to $\mathbb C_+$ and continuously to
$\overline{\mathbb C}_+\setminus\{0\},$ see \cite[p. 74]{Kingman} and \cite[Theorem 5]{Kingman2}.
For $p$-function $g$ given by  (\ref{Pf}) and (\ref{Form})
we will write
$g\sim (c,\nu)$.

We refer to \cite[Chapter 3]{Kingman} concerning basic facts of the analytic theory of $p$-functions.
Note that $\varphi$ in \eqref{Form} is a so-called Bernstein function, and the class of $p$-functions is an important subclass of a class of potential measures arising in the study of Bernstein functions. For a thorough discussion of Bernstein functions and associated potential measures, see \cite[Ch. 5, p. 63-64 and Ch. 11]{Schilling}.

Moreover, by \cite[Theorem 6]{Kingman2}, for $g \sim (c,\nu)$ one has
\[
g(\infty):=\lim_{t\to\infty}\,g(t)=\begin{cases}
\frac{1}{1+\int_{(0,\infty)}\,t\,\nu(dt)}\in [0,1],& \qquad
\mbox{if}\quad c=0,\\
0,& \qquad \mbox{if}\quad c > 0.
\end{cases}
\]
It was proved in  \cite[Theorem 3]{Kingman1} (see also \cite[p. 75-76]{Kingman})  that if
$g(\infty)>0$, i.e. if
\begin{equation}\label{Kin}
c=0\quad \mbox{and}\quad \int_{(0,\infty)} t\nu(dt)<\infty,
\end{equation}
 then $g$ has bounded variation on $[0,\infty).$
 As in the setting of renewal sequences, a natural question is whether $g$ has always bounded variation, i.e. also in the case when $g(\infty)=0.$
 The question was asked by J. Kingman in \cite[p. 76]{Kingman}, and soon after J. Hawkes  produced in \cite{Hawkes} an example showing that the answer is ``no'' in general.

The argument in  \cite{Hawkes} was based on the following
observation. Let
\[
\varphi_0(z):=\int_0^\infty (1-e^{-zt})\,\nu(dt), \qquad \int_{(0,\infty)}\,\nu(dt)<\infty, \quad
{\rm Re}\,z\ge 0,
\]
and the corresponding $p$-function $g$ be given by
\[
\int_0^\infty e^{-zt}g(t)dt=\frac{1}{z+\varphi_0(z)}.
\]
If $g$ has bounded variation on $[0,\infty)$ and $g(\infty)=0,$
then
\begin{equation}\label{lmC}
\lim_{\theta\to 0+}\,
\frac{i\theta}{i\theta-\varphi_0(-i\theta)}=0.
\end{equation}
Essentially, Hawkes constructed
a quasi-exponential series $$f(\theta)=\sum_{k=1}^{\infty}a_k e^{i\lambda_k\theta}, \qquad a_k \ge 0,$$
(see the introduction)
such that
\[
\sum_{k=1}^{\infty} a_k < \infty, \qquad \sum_{k=1}^{\infty} k a_k = \infty \qquad \text{and} \qquad \underline{\lim}_{\theta\to 0+}\,
\frac{|1-f(\theta)|}{\theta}=0
\]
(thus ${\lim}_{\theta\to 0+}\,
{|1-f(\theta)|}/{\theta}$ does not exist  
).
Then, setting
\[
\varphi_0(z):=1-\sum_{k=1}^\infty a_k e^{-\lambda_k z}
\]
for $z$ with  ${\rm Re}\, z \ge 0,$
one obtains the desired (counter-)example.

One can prove that in Hawkes's example
\begin{equation}\label{13}
\sum_{k=1}^\infty k^{\alpha} a_k<\infty,\qquad\alpha\in [0,1/3).
\end{equation}
In other words, the example states that
there exists a finite (discrete) Borel measure $\nu$ on $(0,\infty)$,
satisfying
\[
\int_{(0,\infty)}\,t^\alpha \nu(dt)<\infty,\qquad \alpha\in [0,1/3),
\]
such that the corresponding $p$-function
$g \sim (0,\nu)$ has unbounded variation on $[0,\infty).$
If one can arrange ${\rm supp} \, \nu \subset \N,$ then the example could also be used to produce a negative answer
to the question by Erd\H{o}s-de Bruijn-Kingman on renewal sequences.
However,  we do not see how to realize that in a way different from the above.

On the other hand, using our results, we can generalize the considerations by
Hawkes  in the following way, thus showing that the condition \eqref{Kin} is best
possible in a sense (as in \eqref{erdos}, the discrete analogue of \eqref{Kin}).

\begin{thm}\label{Hww}
Let $\beta: (0,\infty)\to (0,\infty)$
be a   function satisfying
\[
{\lim}_{t\to\infty}\, \beta(t)=0 \qquad \text{and}\qquad
 t\beta(t) \ge 1 \qquad t \ge 1.
\]
Then there exists a finite (discrete) Borel measure $\nu=\nu_\beta$ on $(0,\infty)$ such that
\begin{equation}\label{zv}
\int_{(0,\infty)}\,t \beta(t) \nu(dt)<\infty,
\end{equation}
and the corresponding  $p$-function
$g \sim (0,\nu)$ has unbounded variation on $[0,\infty)$.
\end{thm}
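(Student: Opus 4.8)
The plan is to reduce the whole statement to the discrete machinery already in hand, namely Theorem \ref{betakN}, via the observation of Hawkes recalled just before the statement. Recall its content: for a finite discrete measure $\nu=\sum_k a_k\delta_{\lambda_k}$ with $\sum_k a_k=1$ and $\varphi_0(z)=1-\sum_k a_k e^{-\lambda_k z}$, the $p$-function $g\sim(0,\nu)$ defined by $\int_0^\infty e^{-zt}g(t)\,dt=1/(z+\varphi_0(z))$ has the property that, \emph{if} $g$ has bounded variation and $g(\infty)=0$, \emph{then} $\lim_{\theta\to0}\ui\theta/(\ui\theta-\varphi_0(-\ui\theta))=0$; and $\varphi_0(-\ui\theta)=1-f(\theta)$ where $f(\theta)=\sum_k a_k e^{\ui\lambda_k\theta}$. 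So it suffices to produce such a measure with $\int_{(0,\infty)} t\beta(t)\,\nu(dt)<\infty$, with $g(\infty)=0$, and with $|1-f(\theta_m)|/\theta_m\to 0$ along some $\theta_m\downarrow 0$ — this last being exactly what prevents the displayed limit from being $0$. I would take $\lambda_k=k$, so that the data amount precisely to a function $F\in\mathcal{A}^+$, and obtain the coefficients $a_k$ from Theorem \ref{betakN}.

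Concretely, I would set $\epsilon_k:=\beta(k)$; then $\underline{\lim}_{k\to\infty}\epsilon_k=0$ since $\beta(t)\to 0$, and $k\epsilon_k=k\beta(k)\ge 1$ by hypothesis, so the weight $\tilde w=(k\beta(k))_{k\ge 1}$ is admissible and $\mathcal{A}^+(\tilde w)\ne\emptyset$ (it contains $e^{\ui\theta}$, of $\mathcal A(\tilde w)$-norm $\beta(1)$). Applying Theorem \ref{betakN} to $f=e^{\ui\theta}$ yields an aperiodic $F(z)=\sum_{k\ge 1}a_k z^k\in\mathcal{A}^+(\tilde w)$ — so $a_k\ge 0$, $\sum_k a_k=1$, $\sum_k k\beta(k)a_k<\infty$ — and $\theta_m\downarrow 0$ with $|1-f(\theta_m)|/\theta_m\to 0$, where $f(\theta)=F(e^{\ui\theta})$. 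Next I would check $\mu:=\sum_k k a_k=\infty$: if it were finite, dominated convergence (the $k$th term bounded by $k a_k$) gives $(f(\theta)-1)/\theta\to \ui\mu$ as $\theta\to 0$, with $\mu\ge 1$ since $\sum_k a_k=1$; hence $|1-f(\theta)|/\theta\to\mu>0$, contradicting the choice of $(\theta_m)$ (cf.\ Remark \ref{RemL1}). Now set $\nu:=\sum_{k\ge 1}a_k\delta_k$, a finite discrete measure on $(0,\infty)$ with $\int_{(0,\infty)} t\,\nu(dt)/(1+t)\le\sum_k a_k=1<\infty$, so \eqref{Form} and \eqref{Pf} apply and produce a $p$-function $g\sim(0,\nu)$. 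Then $\int_{(0,\infty)} t\beta(t)\,\nu(dt)=\sum_k k\beta(k)a_k<\infty$ is \eqref{zv}, while $g(\infty)=1/(1+\int_{(0,\infty)} t\,\nu(dt))=1/(1+\mu)=0$. Finally, if $g$ had bounded variation, then, $g(\infty)=0$ being in force, the recalled observation would give $\lim_{\theta\to 0}\ui\theta/(\ui\theta-\varphi_0(-\ui\theta))=0$ with $\varphi_0(-\ui\theta)=1-f(\theta)$; but along $(\theta_m)$ one has $(1-f(\theta_m))/(\ui\theta_m)\to 0$, hence $\ui\theta_m/(\ui\theta_m-(1-f(\theta_m)))\to 1$, a contradiction. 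Therefore $g$ has unbounded variation on $[0,\infty)$.

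I do not anticipate a serious obstacle: the genuine analysis sits entirely inside Theorem \ref{betakN}, and what remains is a translation plus bookkeeping. The one implication that deserves attention is $|1-f(\theta_m)|/\theta_m\to 0\Rightarrow\mu=\infty\Rightarrow g(\infty)=0$; this is the hinge that converts "closeness to $1$ along a sequence" into the $g(\infty)=0$ hypothesis required by Hawkes' observation, and it is the only place where one really must use the structure of the construction rather than generalities about Bernstein functions. A minor point is to confirm that $\nu$ is admissible in \eqref{Form} with $c=0$. It is also worth remarking that the construction delivers \emph{integer} frequencies $e^{\ui k\theta}$ (inherited from Theorem \ref{betakN}), which is not a restriction but in fact refines Hawkes' original argument, which produced only real frequencies; one could alternatively feed in Corollary \ref{corbeta11} together with a limiting argument, but Theorem \ref{betakN} already yields a single $F$ with all the required properties.
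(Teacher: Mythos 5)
Your proposal is correct and follows essentially the same route as the paper: apply Theorem \ref{betakN} with the weight $\tilde w=(k\beta(k))_{k\ge 1}$, take the discrete measure $\nu=\sum_k a_k\delta_k$ supported on $\mathbb N$, and contradict Hawkes' observation \eqref{lmC} via $|1-f(\theta_m)|/\theta_m\to 0$. Your explicit verification that this liminf condition forces $\mu=\sum_k k a_k=\infty$, hence $g(\infty)=0$, fills in a step the paper leaves implicit, but it is the same argument.
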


\begin{proof}
By Theorem
\ref{betakN}, setting $w=(k \beta(k))_{k=1}^{\infty},$ there exists
 $f\in \mathcal{A}^{+}(w)$ such that
 \begin{equation}\label{under}
\underline{\lim}_{\theta\to 0+}\,\frac{|1-f(\theta)|}{\theta}=0.
\end{equation}
If $F(e^{i\theta})=f(\theta),$ then $z \to F(e^{-z})$ is of the form \eqref{Form} for an appropriate discrete measure $\nu$ supported by $\mathbb N.$ If a $p$-function $g$  is defined by
\[
\int_0^\infty e^{-zt} g(t)\, dt=
\frac{1}{z+1-F(z)},\qquad {\rm Re} \ z>0,
\]
then $g$  satisfies (\ref{zv}), and, moreover, it has unbounded variation on $[0,\infty)$ by
(\ref{lmC}) and (\ref{under}).

\end{proof}

\section{Appendix A: Technicalities}

\emph{The proof of Lemma \ref{cosAr}}\,\,
Since  $\lambda/2\in (0,\pi/2)$, $\gamma/2\in (0,\pi/2)$
and $(\gamma+\lambda)/2\in (0,\pi/3)$, we have
\[
\frac{\lambda}{4\gamma}\le
\frac{\lambda}{\pi\gamma}
\le
d_{\lambda,\gamma}\le
\frac{\pi \lambda}{4\gamma}
\le \frac{\lambda}{\gamma}.
\]

Moreover, as
\begin{equation*}\label{El1}
|1-e^{ia}|^2=4\sin^2(a/2)
\end{equation*}
and
\begin{equation*}\label{El2}
{\rm Re}\,((1-e^{ia})(1-e^{ib}))
=-4\sin(a/2)\sin(b/2)\cos((b+a)/2),
\end{equation*}
for $a, b \in \mathbb R,$
we obtain
\begin{equation*}\label{ABC}
|(1-e^{i \lambda})+d(1-e^{-i\gamma})|^2
=A_\gamma\left(d-\frac{B_{\lambda,\gamma}}{A_\gamma}\right)^2+
D_{\lambda,\gamma},
\end{equation*}
where
\begin{align*}\label{AC}
A_\gamma:=&
4\sin^2(\gamma/2),\\
B_{\lambda,\gamma}:
=&-{\rm Re}\,(1-e^{i\lambda})(1-e^{i\gamma})\\
=& 4\sin(\gamma/2)\sin(\lambda/2)
\cos((\gamma+\lambda)/2),\\
C_\lambda:=&4\sin^2(\lambda/2),\\
D_{\lambda,\gamma}:=&C_\lambda-
\frac{B^2_{\lambda,\gamma}}{A_\gamma}.
\end{align*}
Therefore,
\begin{equation*}\label{B/A}
\frac{B_{\lambda,\gamma}}{A_\gamma}=d_{\lambda,\gamma},
\qquad
D_{\gamma,\lambda}
=4\sin^2(\lambda/2)\sin^2((\gamma+\lambda)/2),
\end{equation*}
and then
\[
|(1-e^{i \lambda})+d_{\lambda,\gamma}(1-e^{-i\gamma})|
=2\sin(\lambda/2)\sin((\gamma+\lambda)/2)
\le \frac{\lambda(\gamma+\lambda)}{2}.
\]
$\Box$

To prove Littlewood's result mentioned in Section \ref{lp}
with an explicit constant,
we prove first the next auxiliary estimate; see also Remark \ref{litrem}.

\begin{lemma}\label{LL3}
If $E\subset [a,a+2\pi]$ is a measurable set such that
\[
\int_{E} (1-\cos t)\,dt\le \epsilon
\]
for some $\epsilon>0,$ then
\begin{equation}\label{EpsA}
{\rm meas} \, (E) \le  (4\pi^2)^{1/3}\epsilon^{1/3}.
\end{equation}
\end{lemma}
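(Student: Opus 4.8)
The plan is to reduce \eqref{EpsA} to a one‑dimensional estimate by exploiting $2\pi$‑periodicity together with a rearrangement (``bathtub'') argument, and then to isolate a sharp elementary inequality for $x-\sin x$ that produces the stated constant.

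First I would set $m:=\mathrm{meas}\,(E)$, so $0\le m\le 2\pi$ since $E$ lies in an interval of length $2\pi$. The function $t\mapsto 1-\cos t=2\sin^{2}(t/2)$ is $2\pi$‑periodic, nonnegative, vanishes exactly on $2\pi\Z$, and on $[-\pi,\pi]$ is even and strictly increasing in $|t|$. Regarding $E$ as a subset of $\R/2\pi\Z$, I would compare it with $E^{*}:=\{t\in[-\pi,\pi]:\,|t|\le m/2\}$, which has the same measure $m$ and is precisely the sublevel set $\{t:\,1-\cos t\le 1-\cos(m/2)\}$. The bathtub principle then gives
\[
\int_E(1-\cos t)\,dt\ \ge\ \int_{E^{*}}(1-\cos t)\,dt
=\int_{-m/2}^{m/2}(1-\cos t)\,dt = m-2\sin(m/2);
\]
the one‑line verification is that $1-\cos t\ge 1-\cos(m/2)$ on $E\setminus E^{*}$ while $1-\cos t\le 1-\cos(m/2)$ on $E^{*}\setminus E$, and $\mathrm{meas}\,(E\setminus E^{*})=\mathrm{meas}\,(E^{*}\setminus E)$.

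The crucial step is the sharp inequality
\[
m-2\sin(m/2)\ \ge\ \frac{m^{3}}{4\pi^{2}},\qquad m\in[0,2\pi],
\]
equivalently $x-\sin x\ge x^{3}/\pi^{2}$ for $x=m/2\in[0,\pi]$. I would prove it by showing that $q(x):=(x-\sin x)/x^{3}$ is strictly decreasing on $(0,\pi]$, so that $q(x)\ge q(\pi)=1/\pi^{2}$. A short computation gives $q'(x)=r(x)/x^{4}$ with $r(x):=3\sin x-2x-x\cos x$; moreover $r(0)=0$, $r'(x)=2\cos x-2+x\sin x$, $r''(x)=x\cos x-\sin x$, and since $\frac{d}{dx}\bigl(x\cos x-\sin x\bigr)=-x\sin x<0$ on $(0,\pi)$ one gets $r''<0$, hence $r'<0$, hence $r<0$ on $(0,\pi]$, i.e.\ $q'<0$ there. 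I expect this sharpening to be the only genuine point of the proof: the cruder bound $1-\cos t\ge 2t^{2}/\pi^{2}$ coming from Jordan's inequality would only give the weaker constant $(6\pi^{2})^{1/3}$, so one must keep the exact antiderivative $m-2\sin(m/2)$. Combining the two displays yields $\epsilon\ge\int_E(1-\cos t)\,dt\ge m^{3}/(4\pi^{2})$, whence $\mathrm{meas}\,(E)=m\le(4\pi^{2})^{1/3}\epsilon^{1/3}$, which is \eqref{EpsA}.
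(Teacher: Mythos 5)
Your proof is correct and follows essentially the same route as the paper: comparison of $E$ with the symmetric interval of equal measure (the paper's $E_1$, $E_2$, $\tilde E$ decomposition is exactly your bathtub argument), exact evaluation $\int_{-m/2}^{m/2}(1-\cos t)\,dt=m-2\sin(m/2)$, and the sharp bound $x-\sin x\ge x^{3}/\pi^{2}$ on $[0,\pi]$. The only difference is that you supply a full monotonicity proof of that last elementary inequality (and correctly use it on all of $[0,\pi]$), whereas the paper merely quotes it.
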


\begin{proof}
By $2\pi$-periodicity of $\cos t,$
we may assume without loss of generality that
$E\subset [-\pi,\pi].$
Write $I=[-{\rm meas} (E)/2, {\rm meas} \, (E)/2]$ and
\[
E_{1}=E\cap I, \qquad
E_{2}=E\setminus E_{1},
\qquad \text{and} \qquad
\tilde{E}= I \setminus E_{1}.
\]

Noting that ${\rm meas} \, (\tilde{E})={\rm meas} \, (E_{2}),$ we have
\begin{align*}
\int_{E_{2}} (1-\cos t)\,dt\ge& {\rm meas}\,(E_{2}) \min_{t\in E_{2}}(1-\cos t)\\
\ge& {\rm meas} \, (\tilde{E}) \max_{t \in \tilde{E}}(1-\cos t)\\
\ge&
\int_{\tilde{E}} (1-\cos t)\,dt,
\end{align*}
hence
\begin{align*}
\epsilon\ge \int_{E} (1-\cos t)\,dt
\ge& \int_{E_{1}} (1-\cos t)\,dt
+\int_{\tilde{E}} (1-\cos t)\,dt\\
=&\int_{I} (1-\cos t)\,dt,
\end{align*}
so that, as
\[
t-\sin t \ge \frac{t^3}{\pi^2},\qquad t \in [0,\pi/2],
\]
we obtain
\[
\epsilon\ge\int_{I} (1-\cos t)\,dt
=\left. 2(t-\sin t)\right|_{t={\rm meas}\,(E)/2}
\ge\frac{1}{4\pi^2}({\rm meas} \, (E))^3,
\]
and the statement follows.
\end{proof}

\begin{cor}\label{corcor}
Let $\alpha\ge 1$, $\beta \in \R, \epsilon >0,$ and let
a measurable set $E\subset [a,a+2\pi], a \in \mathbb R,$ be such that
\[
\int_{E} (1-\cos (\alpha t+\beta))\,dt\le \epsilon.
\]
Then
\begin{equation}\label{EpsAC}
{\rm meas} \, (E) \le (16\pi^2)^{1/3}\epsilon^{1/3}.
\end{equation}
\end{cor}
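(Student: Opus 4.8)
The plan is to reduce Corollary~\ref{corcor} to Lemma~\ref{LL3} by an affine change of variables, paying for this by having to cut a long interval into short pieces. First I would substitute $s=\alpha t+\beta$: writing $c:=\alpha a+\beta$ and $E':=\{\alpha t+\beta:\ t\in E\}\subset J:=[c,c+2\pi\alpha]$, the change of variables gives ${\rm meas}(E')=\alpha\,{\rm meas}(E)$ and
\[
\int_{E'}(1-\cos s)\,ds=\alpha\int_{E}(1-\cos(\alpha t+\beta))\,dt\le \alpha\epsilon .
\]
Since $\alpha\ge 1$, the interval $J$ has length $2\pi\alpha\ge 2\pi$, so Lemma~\ref{LL3} cannot be applied to $E'$ as it stands. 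I would therefore split $J$ into $N:=\lceil\alpha\rceil$ consecutive subintervals $I_1,\dots,I_N$ of equal length $2\pi\alpha/N\le 2\pi$ (this uses $N\ge\alpha$), put $E'_j:=E'\cap I_j$, and apply Lemma~\ref{LL3} to each $E'_j$, which is a measurable subset of an interval of length at most $2\pi$, hence of an interval of length exactly $2\pi$.

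Lemma~\ref{LL3} then yields $({\rm meas}(E'_j))^3\le 4\pi^2\int_{E'_j}(1-\cos s)\,ds$ for every $j$. The step I expect to be the real point of the argument is how these are summed: the naive estimate ${\rm meas}(E')=\sum_j{\rm meas}(E'_j)\le N\max_j{\rm meas}(E'_j)$ would give a bound for ${\rm meas}(E)$ of order $\alpha^{1/3}\epsilon^{1/3}$, which is wrong, since the sharp bound is $\alpha$-independent. The fix is to combine the cubic estimates through Hölder's inequality in the form $\sum_{j=1}^{N}b_j\le N^{2/3}\big(\sum_{j=1}^{N}b_j^{3}\big)^{1/3}$; with $b_j={\rm meas}(E'_j)$ and $\sum_j\int_{E'_j}=\int_{E'}$ this gives
\[
{\rm meas}(E')\le N^{2/3}\Big(4\pi^2\int_{E'}(1-\cos s)\,ds\Big)^{1/3}\le N^{2/3}(4\pi^2\alpha\epsilon)^{1/3}.
\]

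Dividing by $\alpha$ and recalling ${\rm meas}(E')=\alpha\,{\rm meas}(E)$, I obtain ${\rm meas}(E)\le(4\pi^2\epsilon)^{1/3}(N/\alpha)^{2/3}$. Finally $\alpha\ge 1$ forces $N=\lceil\alpha\rceil\le\alpha+1\le 2\alpha$, so $(N/\alpha)^{2/3}\le 2^{2/3}$ and $2^{2/3}(4\pi^2\epsilon)^{1/3}=(16\pi^2\epsilon)^{1/3}$, which is exactly \eqref{EpsAC}. Everything but the Hölder step --- the change of variables, the choice $N=\lceil\alpha\rceil$, and the tracking of the constant --- is routine; the one place where care is needed is resisting the trivial $\sum\le N\max$ bound and using the $\ell^{3}$--$\ell^{1}$ inequality instead, which is what keeps the final constant independent of $\alpha$.
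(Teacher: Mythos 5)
Your proof is correct and follows essentially the same route as the paper: rescale by $\alpha$, partition the stretched interval into $\lceil\alpha\rceil$ pieces of length at most $2\pi$, apply Lemma~\ref{LL3} to each piece, and recombine via the $\ell^{1}$--$\ell^{3}$ (H\"older/concavity) inequality, with $\lceil\alpha\rceil\le 2\alpha$ giving the factor $2^{2/3}$ and hence the constant $(16\pi^2)^{1/3}$. The paper phrases the summation step as a convexity bound on $\sum_k\epsilon_k^{1/3}$ rather than H\"older on the measures, but the two are the same estimate.
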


\begin{proof}
We have
\[
\int_{E} (1-\cos (\alpha t+\beta))\,dt
=\frac{1}{\alpha}\int_{G} (1-\cos t)\,dt,
\]
where
$G=\alpha E+\beta\subset[\alpha a+\beta,\alpha a+\beta+2\pi\alpha]$ and ${\rm meas} \, (G)=\alpha \, {\rm meas}\, (E)$ $ \le 2\pi \alpha.$
Set $n=[\alpha+1],$  $h=2\pi\alpha/n,$ and write
\[
G_k=[h_k, h_k+h]\cap G,\qquad h_k=\alpha a+\beta+
(k-1)h,\quad k=1,\dots,n,
\]
so that
\[
G=\cup_{k=1}^n G_k,\qquad \text{where} \qquad G_k\cap G_j=\emptyset,\quad k\not=j, \quad 1 \le k \le n.
\]
By assumption,
\[
\int_{G_k} (1-\cos t)\,dt\le \alpha \epsilon_k,\qquad k=1,\dots,n,\qquad
\epsilon_1+\dots +\epsilon_n=\epsilon.
\]
Hence, by Lemma \ref{LL3} and convexity arguments,
\begin{align*}
{\rm meas} \, (E)= \alpha^{-1} \sum_{k=1}^n {\rm meas} \, (G_k) \le& (4\pi^2)^{1/3}\alpha^{-2/3}\sum_{k=1}^n \epsilon_k^{1/3}\\
\le& (4\pi^2)^{1/3} \left(\frac{n}{\alpha}\right)^{2/3}\epsilon^{1/3}\\
\le& (16\pi^2)^{1/3}\epsilon^{1/3}.
\end{align*}
\end{proof}

Thus, we have the following result,
formulated essentially in
\cite[Lemma]{Littl} with a constant $A$ instead of $(16\pi^2)^{1/3}$.

\begin{lemma}\label{Llemma}
Let $$f(t)=\sum_{m=1}^\infty a_m\cos(\alpha_m t+\beta_m)$$
where $\alpha_m\ge 1$, $\beta_m \in \mathbb R,$ $m\in \N$, and
\[
a_m\ge 0,\qquad \sum_{m=1}^\infty a_m=1.
\]
If $\epsilon>0 $ and  $E \subset [-\pi,\pi]$ is a measurable set such that
\[
\int_E (1-f(t))\,dt\le \epsilon,
\]
then
${\rm meas}\, (E)\le (16\pi^2)^{1/3}\epsilon^{1/3}$.
\end{lemma}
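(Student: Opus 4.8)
The plan is to reduce the claim, which concerns the full series $f$, to the single-frequency estimate of Corollary \ref{corcor}.

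First I would note that, since $a_m \ge 0$, $\sum_{m=1}^\infty a_m = 1$ and $|\cos| \le 1$, the series for $f$ converges uniformly, so that
\[
1 - f(t) = \sum_{m=1}^\infty a_m \bigl(1 - \cos(\alpha_m t + \beta_m)\bigr)
\]
is a series of nonnegative functions on $[-\pi,\pi]$. Writing $\delta_m := \int_E \bigl(1 - \cos(\alpha_m t + \beta_m)\bigr)\,dt \ge 0$ and using Tonelli's theorem, the hypothesis $\int_E (1-f(t))\,dt \le \epsilon$ becomes
\[
\sum_{m=1}^\infty a_m \delta_m \le \epsilon .
\]

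The second step is to extract one good frequency. Since $(a_m)$ is a probability vector, there must exist an index $m_0$ with $a_{m_0} > 0$ and $\delta_{m_0} \le \epsilon$: otherwise $\delta_m > \epsilon$ for every $m$ with $a_m > 0$, which would force $\sum_m a_m \delta_m > \epsilon \sum_m a_m = \epsilon$, contradicting the previous display.

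Finally, I would apply Corollary \ref{corcor} with $\alpha = \alpha_{m_0} \ge 1$, $\beta = \beta_{m_0}$ and the measurable set $E \subset [-\pi,\pi]$ (an interval of length $2\pi$): from $\int_E \bigl(1 - \cos(\alpha_{m_0} t + \beta_{m_0})\bigr)\,dt = \delta_{m_0} \le \epsilon$ that corollary yields precisely ${\rm meas}\,(E) \le (16\pi^2)^{1/3}\epsilon^{1/3}$. I do not expect any real obstacle here: the substance of the lemma is already contained in Corollary \ref{corcor} (hence ultimately in the cubic estimate of Lemma \ref{LL3}), and what remains is only the routine passage from a single frequency to a probabilistic superposition of frequencies.
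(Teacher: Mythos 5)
Your proof is correct and follows the same route the paper takes: the paper presents Lemma \ref{Llemma} as an immediate consequence of Corollary \ref{corcor}, and your Tonelli-plus-averaging (pigeonhole) step, extracting one frequency $m_0$ with $a_{m_0}>0$ and $\int_E\bigl(1-\cos(\alpha_{m_0}t+\beta_{m_0})\bigr)\,dt\le\epsilon$, is exactly the passage the paper leaves implicit. No gaps.
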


Now we are able to prove Theorem \ref{Lth} with the absolute constant $4\pi.$
\vspace{0,3cm}

{\it Proof of Theorem \ref{Lth}.}\,\,
Let $f$ be as in Theorem \ref{Lth}, and let
$E_\epsilon:=\{t \in[-\pi,\pi]: f(t)\ge 1-\epsilon \}.$
Then
\[
\int_{E_\epsilon} (1-f(t))\,dt\le \epsilon\, {\rm meas} \, (E_\epsilon),
\]
and from Lemma \ref{Llemma} it follows that
$
{\rm meas}\,(E_\epsilon)\le (16\pi^2)^{1/3}\epsilon^{1/3}({\rm meas}\,
( E_\epsilon))^{1/3},
$
hence
\[
{\rm meas} \, (E_\epsilon)\le (16\pi^2)^{1/2}\epsilon^{1/2}=
4\pi\epsilon^{1/2}.
\]
\hfill \hfill \hfill \hfill \hfill \hfill \hfill \hfill \hfill \hfill \hfill \hfill \hfill \hfill \hfill $\Box$

We now turn to the proof of Proposition \ref{Cake2}. First, let us recall one of the forms of
layer-cake representation, which is a direct consequence of  e.g.
\cite[Prop. 1.3.4]{Garling} or \cite[Ch. 1.13]{Lieb}.
\begin{lemma}\label{Cake1}
Let $\psi: [0,\infty) \mapsto (0,\infty)$  be a  differentiable strictly {\it decreasing} function
with $\lim_{t\to\infty}\,\psi(t)=0$,
and let
$\varphi: \Omega \mapsto (0,\infty)$ be a measurable function, where $\Omega\subset [0,\infty)$ is a measurable set of
finite measure. Then
\begin{equation}\label{ReprC}
\int_{\Omega} \psi(\varphi(s))\,ds=
-\int_0^\infty \psi'(t) g_{\varphi}(t)\,dt,
\end{equation}
where
\begin{equation}\label{gvarphi}
g_{\varphi}(t):= \{
s \in \Omega :\,\varphi(s) \le t\},\qquad t>0.
\end{equation}
\end{lemma}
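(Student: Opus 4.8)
The plan is to reduce the identity \eqref{ReprC} to Tonelli's theorem after rewriting the integrand pointwise. First I would use that $\psi$ is differentiable, strictly decreasing, and satisfies $\psi(t)\to 0$ as $t\to\infty$: the fundamental theorem of calculus gives $\psi(\varphi(s))-\psi(b)=\int_{\varphi(s)}^{b}(-\psi'(t))\,dt$ for $b>\varphi(s)$, and letting $b\to\infty$ (using $\psi(b)\to 0$ together with monotone convergence, as $-\psi'\ge 0$) yields, for each fixed $s\in\Omega$,
\[
\psi(\varphi(s))=\int_{0}^{\infty}(-\psi'(t))\,\mathbf 1_{\{\varphi(s)\le t\}}\,dt .
\]

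Next I would handle the measurability bookkeeping needed to form the iterated integral. The function $(s,t)\mapsto(-\psi'(t))\,\mathbf 1_{\{\varphi(s)\le t\}}$ is nonnegative and jointly measurable on $\Omega\times(0,\infty)$, since $\varphi$ is measurable and $\psi'$ is measurable in $t$; and $g_\varphi(t)={\rm meas}(\{s\in\Omega:\varphi(s)\le t\})$ is a nondecreasing, hence Borel measurable, function of $t$, finite because ${\rm meas}(\Omega)<\infty$. Then, integrating the displayed identity over $s\in\Omega$ and applying Tonelli's theorem to exchange the two integrals, I obtain
\[
\int_{\Omega}\psi(\varphi(s))\,ds
=\int_{0}^{\infty}(-\psi'(t))\Bigl(\int_{\Omega}\mathbf 1_{\{\varphi(s)\le t\}}\,ds\Bigr)\,dt
=-\int_{0}^{\infty}\psi'(t)\,g_\varphi(t)\,dt,
\]
which is exactly \eqref{ReprC}.

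The argument has no genuine obstacle; the only point that deserves a word is the appeal to the fundamental theorem of calculus for $\psi$ assumed merely differentiable. This is unproblematic when $\psi\in C^1$, which is the only situation actually used later (e.g.\ $\psi(t)=(t+d)^{-p}$ in the proof of Proposition \ref{Cake2}), and more generally it suffices that $\psi$ be locally absolutely continuous. Alternatively — and this is what the surrounding text suggests — one may simply invoke the standard layer-cake / distribution-function identity, e.g.\ \cite[Prop.~1.3.4]{Garling} or \cite[Ch.~1.13]{Lieb}, and specialize it to $\psi$ and $\varphi$ as above.
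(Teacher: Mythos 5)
Your proof is correct, but it takes a different route from the paper. The paper does not reprove the layer--cake identity at all: it writes $\int_{\Omega}\psi(\varphi(s))\,ds=-\int_{\Omega}[\psi(0)-\psi(\varphi(s))]\,ds+\psi(0)\,{\rm meas}(\Omega)$ and applies the cited distribution-function representation from \cite{Garling} or \cite{Lieb} to the nonnegative \emph{increasing} function $t\mapsto\psi(0)-\psi(t)$, the hypothesis $\psi(t)\to0$ then making the two $\psi(0)\,{\rm meas}(\Omega)$ terms cancel. You instead give a self-contained proof: the pointwise identity $\psi(\varphi(s))=\int_0^\infty(-\psi'(t))\mathbf 1_{\{\varphi(s)\le t\}}\,dt$ (FTC plus $\psi(b)\to0$) followed by Tonelli, which is in effect a direct proof of the layer-cake theorem itself rather than a reduction to it; your closing remark shows you also see the paper's shortcut. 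What your version buys is independence from the references and a transparent use of where each hypothesis enters (positivity of $-\psi'$ for Tonelli, the decay of $\psi$ for the limit $b\to\infty$); what the paper's version buys is brevity. One small remark on your only hedge: the appeal to FTC for a merely differentiable $\psi$ is actually unproblematic in full generality, since a monotone everywhere-differentiable function has $\psi'$ locally integrable (Fatou bounds $\int_a^b|\psi'|$ by $\psi(a)-\psi(b)$), and an everywhere-differentiable function with locally integrable derivative is locally absolutely continuous, so the fundamental theorem of calculus applies; in any case, as you note, only $\psi(t)=(t+d)^{-p}$ is needed in Proposition \ref{Cake2}, so nothing is lost either way.
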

(It suffices to note  that
\[
\int_{\Omega} \psi(\varphi(s))\,ds=
-\int_{\Omega} [\psi(0)-\psi(\varphi(s))]\,ds
+\psi(0){\rm meas}(\Omega)
\]
and apply either of the statements from \cite{Garling} or \cite{Lieb} to $\psi(0)-\psi(t)$.)

Having Lemma \ref{Cake1} in mind, we are now ready to give a proof of Proposition \ref{Cake2}.
\vspace{0,3cm}

{\it Proof of Proposition \ref{Cake2}}. \,\,
Let $g_{\varphi}$ be defined
by (\ref{gvarphi}) so that $$g_{\varphi}(t)={\rm meas} (\Omega) - {\rm meas}\,\{
s:\,\varphi(s)>t\}, \qquad t>0.$$  By assumption,
   \[
   g_{\varphi}(t)\le A\sqrt{\frac{t}{r}},\quad t\in [\eta,r].
   \]
Using Corollary
\ref{Cake1} with $\psi(t)=1/(t+d)^p,$
we obtain
\begin{equation}\label{cake3}
\int_{\Omega} \frac{ds}{(\varphi(s)+d)^p}= p\int_0^\infty \frac{g_\varphi(t)\,dt}{(t+d)^{p+1}}
=\frac{{\rm meas} \, (\Omega)}{(r+d)^p}+
 p\int_\eta^r \frac{g_\varphi(t)\,dt}{(t+d)^{p+1}}.
   \end{equation}
Next,  using the estimate (see e.g. \cite[Thm. 41]{Ineq})
\[
\frac{1}{x^q}-\frac{1}{y^q}
\le \frac{q(y-x)}{x^q y},\qquad y\ge x>0,\quad q\ge 1,
\]
and the elementary inequality
\[
(x+y)^q\ge x^q+y^q,\qquad x,y\ge 0,\quad q\ge 1,
\]
we have
   \begin{align*}
    \int_\eta^r \frac{g_\varphi(t)\,dt}{(t+d)^{p+1}}\le&
   \frac{A}{\sqrt{r}}\int_\eta^r \frac{\sqrt{t}\,dt}{(t+d)^{p+1}}\\
   \le&
   \frac{A}{\sqrt{r}}\int_\eta^r \frac{dt}{(t+d)^{p+1/2}}\\
   =& \frac{A}{(p-1/2)\sqrt{r}}\left[\frac{1}{(\eta+d)^{p-1/2}}-
   \frac{1}{(r+d)^{p-1/2}}\right]\\
   \le& \frac{A}{(p-1/2)\sqrt{r}}
   \frac{(p-1/2)(r-\eta)}{(\eta+d)^{p-1/2}(r+d)}\\
   \le& \frac{ A\sqrt{r}}{(\eta+d)^{p-1/2}(r+d)} \\
   \le& \frac{2 A}{(\eta+d)^{p-1/2}(\sqrt{r}+\sqrt{d})}\\
       \le&
   \frac{2A }{\eta^{p-1/2}\sqrt{r} +d^p}.
   \end{align*}
   From this and (\ref{cake3}) we obtain (\ref{cake4}).
\hfill \hfill $\Box$

We finish this section with the proof of auxiliary Proposition \ref{n2} on positive series.
\vspace{0,3cm}

\emph{Proof of Proposition \ref{n2}}.\,\,
Note that
\[
 2^{\alpha n} q_{2^n}\le 2^\alpha j^\alpha q_j,\qquad
2^{n-1}+1 \le j \le 2^n,\quad n\ge m,
\]
so
\[
 2^{\alpha n} q_{2^n}\le
 \frac{2^\alpha}{2^{n-1}} \sum_{j=2^{n-1}+1}^{2^n} j^\alpha q_j
\le  2^{1+\alpha}\sum_{j=2^{n-1}+1}^{2^n} j^{\alpha-1} q_j,
 \]
 and then
 \[
 \sum_{n=m}^\infty 2^{\alpha n} q_{2^n}
\le 2^{1+\alpha}\sum_{n=m}^\infty \sum_{j=2^{n-1}+1}^{2^n} j^{\alpha-1} q_j
=2^{1+\alpha}\sum_{n=2^{m-1}+1}^\infty n^{\alpha-1} q_n.
\]

On the other hand,
\[
 2^{\alpha n} q_{2^n}\ge 2^{-\alpha} j^\alpha q_j,\qquad
2^n \le j \le 2^{n+1}-1,\quad n\ge m,
\]
hence
\[
 2^{\alpha n} q_{2^n}\ge \frac{1}{ 2^{\alpha}2^n} \sum_{j=2^n}^{2^{n+1}-1}j^\alpha q_j\ge
 \frac{1}{ 2^{\alpha}} \sum_{j=2^n}^{2^{n+1}-1}j^{\alpha-1} q_j,
\]
and
\[
\sum_{n=m}^\infty 2^{\alpha n} q_{2^n}
\ge
 \frac{1}{ 2^{\alpha}} \sum_{n=m}^\infty \sum_{j=2^n}^{2^{n+1}-1}j^{\alpha-1} q_j
 = \frac{1}{ 2^{\alpha}} \sum_{n=2^{m}}^\infty n^{\alpha-1} q_n.
\]
\hfill \hfill \hfill \hfill \hfill \hfill \hfill \hfill \hfill \hfill $\Box$
\section{Appendix B: Croft's approach}

In this section we present a different proof of Corollary \ref{corcos}
based on Croft's approach from
\cite{Croft} dealing with
 quasi-exponential series, that is with trigonometrical series with real frequencies.
However, as we remarked above, the argument in \cite{Croft} seems to be incomplete.

Let us briefly compare Croft's approach with the one of the present paper.
For a sufficiently small parameter $\theta_0$ both approaches aim at finding $d$ and $\gamma$ such that
$\delta(\theta_0)=|1-P_{d,\gamma}(\theta_0)|$
is ``small'', in particular,
$\delta(\theta_0)\le c \theta_0^{2-\epsilon}$ for some $\epsilon \in (0,1],$
where a constant $c$ does not depend on $\theta_0.$
Croft proceeds by requiring
${\rm Im}\, P_{d,\gamma}(\theta_0)=0.$
This way he expresses $\gamma$ in terms of $d$ and then chooses $d$ to ensure the inequality above.
Unfortunately, his proof stops at this step.
Proceeding in a different way,  for a fixed $\gamma$ we minimize the quadratic function
$q(d)=(1+d)^2|1-P_{d,\gamma}(\theta_0)|^2$ with respect to $d.$
This relates $d$ to  $\gamma,$ and allows us to
make the quantity $\delta$ ``small'' enough to fit the steps of our inductive constructions in Section \ref{aux}.
The two steps lead eventually to similar estimates of $\delta(\theta_0).$
On the other hand, we also have to take care of a) getting polynomials with integer frequencies eventually, b) spreading out our estimates for $\delta$
to large sets, c) extending our estimates for polynomials $(Q_m)_{m=1}^{\infty}$
from fixed $\theta_0$ to appropriate sequences of  $(\theta_m)_{m=1}^{\infty}\subset [0,\pi]$ going to $0$ and then, finally, d)  of
constructing $f \in \mathcal A^{+}(w)$ out of  $(Q_m)_{m=1}^{\infty}$ via a limiting procedure.

\begin{lemma}\label{Pol}
Let
\[
P(\theta)=\sum_{k=1}^n a_k e^{ik\theta}\in \mathcal{A}^{+}
\]
$\epsilon\in (0,1],$
and $\alpha\in (0,\epsilon/2].$
Then for any
$\theta_0\in (0,1/n^{1/\alpha}],$
\begin{equation}\label{ImP}
0<{\rm Im}\, P(\theta_0)
\le\theta_0^{1-\alpha}<1,
\end{equation}
and
\begin{equation}\label{ReP}
0\le  1-{\rm Re}\,P(\theta_0)\le\theta_0^{2-\epsilon}.
\end{equation}
Moreover,
\begin{equation}\label{MoreP}
\|P'\|_{\infty} \le \theta_0^{-\alpha}.
\end{equation}
\end{lemma}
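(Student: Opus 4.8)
The plan is that the whole lemma is a direct computation once one unwinds the hypothesis on $\theta_0$. First I would record the basic consequence of $\theta_0\in(0,1/n^{1/\alpha}]$: this says exactly $n\theta_0^{\alpha}\le1$, and since $\theta_0\le1$ and $\alpha\le\epsilon/2<1$ (so that $\theta_0\le\theta_0^{\alpha}$), it follows that
\[
k\theta_0\le k\theta_0^{\alpha}\le n\theta_0^{\alpha}\le1,\qquad 1\le k\le n .
\]
Thus every frequency $k\theta_0$ that appears below lies in $(0,1]\subset(0,\pi/2)$, where $\sin$ is positive and increasing and the elementary bounds $0<\sin x<x$ and $0\le 1-\cos x\le x^{2}/2$ are available. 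I would also use repeatedly that $a_{k}\ge0$ and $\sum_{k=1}^n a_{k}=1$ force $\sum_{k=1}^n ka_{k}\le n$ and $\sum_{k=1}^n k^{2}a_{k}\le n^{2}$.

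For \eqref{ImP}, write $\mathrm{Im}\,P(\theta_{0})=\sum_{k=1}^{n}a_{k}\sin(k\theta_{0})$; each summand is nonnegative and at least one $a_{k}$ is positive, so $\mathrm{Im}\,P(\theta_{0})>0$, while $\sin(k\theta_{0})\le\sin 1<1$ for every $k$ gives $\mathrm{Im}\,P(\theta_{0})<1$. For the middle estimate use $\sin(k\theta_{0})\le k\theta_{0}$ and $\sum_{k}ka_{k}\le n$ to get $\mathrm{Im}\,P(\theta_{0})\le n\theta_{0}=(n\theta_{0}^{\alpha})\,\theta_{0}^{1-\alpha}\le\theta_{0}^{1-\alpha}$. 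For \eqref{ReP}, since $\sum_{k}a_{k}=1$ one has $1-\mathrm{Re}\,P(\theta_{0})=\sum_{k=1}^{n}a_{k}\bigl(1-\cos(k\theta_{0})\bigr)\ge0$, and combining $1-\cos(k\theta_{0})\le(k\theta_{0})^{2}/2$ with $\sum_{k}k^{2}a_{k}\le n^{2}$ yields
\[
1-\mathrm{Re}\,P(\theta_{0})\le\frac{1}{2}\,n^{2}\theta_{0}^{2}=\frac{1}{2}(n\theta_{0}^{\alpha})^{2}\theta_{0}^{2-2\alpha}\le\frac{1}{2}\theta_{0}^{2-2\alpha}\le\theta_{0}^{2-\epsilon},
\]
the last inequality because $2-2\alpha\ge2-\epsilon$ and $\theta_{0}\le1$. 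Finally \eqref{MoreP} follows from $|P'(\theta)|\le\sum_{k=1}^{n}ka_{k}\le n\le\theta_{0}^{-\alpha}$ for every $\theta$, once more by $n\theta_{0}^{\alpha}\le1$.

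There is no genuine obstacle here: the only point requiring a little care is the exponent bookkeeping — tracking precisely where the conditions $\alpha\le\epsilon/2$ and $\theta_{0}\le1$ enter — and noting that the strict bound $\mathrm{Im}\,P(\theta_{0})<1$ must be read off from $\sin(k\theta_0)\le\sin 1<1$ rather than from $\theta_{0}^{1-\alpha}<1$, since $\theta_{0}=1$ is permitted when $n=1$.
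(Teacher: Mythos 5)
Your proof is correct and follows essentially the same computation as the paper: from $n\theta_0^{\alpha}\le 1$ one gets $k\theta_0\le 1$, then the elementary bounds $0<\sin x\le x$ and $1-\cos x\le x^{2}/2$ together with $\sum_k ka_k\le n$, $\sum_k k^{2}a_k\le n^{2}$ give \eqref{ImP}, \eqref{ReP} and \eqref{MoreP} exactly as in the paper (which uses $\sin x\ge 2x/\pi$ only to make the lower bound in \eqref{ImP} quantitative). Your remark about the corner case $\theta_0=1$, $n=1$ (where $\theta_0^{1-\alpha}=1$, so ${\rm Im}\,P(\theta_0)<1$ must be read off from $\sin 1<1$) is a valid refinement of a point the paper glosses over, though it is immaterial in the paper's application, where $\theta_0<\Theta_0<1$.
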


\begin{proof}
Since
$n\theta_0\in (0,1]$ and $n\le \theta_0^{-\alpha}$,
\[
0<\frac{2\theta_0}{\pi}\sum_{k=1}^n k a_k
\le {\rm Im}\, P(\theta_0)
=\sum_{k=1}^n a_k\sin(k\theta_0)
\le n\theta_0\le\theta_0^{1-\alpha}<1,
\]
and, in view of $\epsilon\ge 2\alpha$,
\[
0\le  1-{\rm Re}\,P(\theta_0)=\sum_{k=1}^n a_k(1-\cos(k\theta_0))\le n^2\theta_0^2/2
\le\theta_0^{2(1-\alpha)}\le\theta_0^{2-\epsilon},
\]
so that (\ref{ImP}) and (\ref{ReP}) hold.
Moreover,
\[
|P'(\theta)|\le
\sum_{k=1}^n k a_k\le n\le \theta_0^{-\alpha},\qquad |\theta|\le \pi,
\]
and we get (\ref{MoreP}).
\end{proof}

As an illustration, to show that Croft's idea actually works,
we provide now a proof of Corollary
\ref{corcos} following Croft's approach.

{\it Proof of Corollary
\ref{corcos}.} \,\,
 For $d>0$ and $\tau>0$, define
\begin{equation}\label{DefAA}
Q(\theta)=\frac{P(\theta)+de^{i\tau\theta}}{1+d}.
\end{equation}

Then
\begin{align*}\label{AdMM}
1-Q(\theta)=\frac{(1-P(\theta))+d(1-e^{i\tau\theta})}{1+d}
\end{align*}
and
\begin{align*}
P(\theta)-Q(\theta)=
\frac{d(P(\theta)-e^{i\tau\theta})}{1+d}.
\end{align*}

Fix $\Theta_0\in (0,1)$ such that
\begin{equation}\label{withoutA}
\frac{n}{\varphi^{1/4}(\theta)}\le 1,\qquad \text{and}\qquad
\nu+\frac{\log\varphi(\theta)}{|\log\theta|}\le 1,\qquad
\theta\in (0,\Theta_0],
\end{equation}
so that, in particular,
\begin{equation}\label{gwA}
\varphi(\theta)\ge 1 \qquad \text{and} \qquad
\theta^{1-\nu}\varphi(\theta)\le 1,\quad
\theta\in (0,\Theta_0].
\end{equation}
For $\theta\in (0,\Theta_0],$ define
\[
\epsilon(\theta):=
\nu+\frac{\log\varphi(\theta)}{|\log\theta|}\in (0,1],
\qquad\alpha(\theta):=\frac{\log\varphi(\theta)}{4|\log\theta|}<
\frac{\epsilon}{4},
\]
and note that
\[
\theta^{\epsilon(\theta)}=\frac{\theta^\nu}{\varphi(\theta)},\qquad
\theta^{\alpha(\theta)}
=\frac{1}{\varphi^{1/4}(\theta)}.
\]
Then the assumption $\theta \in (0,1/n^{1/\alpha(\theta)}]$
takes the form
$n\le \varphi^{1/4}(\theta)$
and holds for $\theta\in (0,\Theta_0]$, by
(\ref{withoutA}).

Furthermore, for  $\theta\in (0,\Theta_0]$ define
\[
d(\theta):=
\frac{\theta^\nu}{\phi^{1/2}(\theta)} \qquad \text{and} \qquad
v(\theta):=\frac{{\rm Im}\,P(\theta)}{d(\theta)}.
\]
By (\ref{ImP}) and (\ref{gwA}) we infer that
\begin{equation}\label{Zv}
0<v(\theta)\le \frac{\theta^{1-\alpha(\theta)}}{d(\theta)}
=\theta^{1-\nu}\phi^{3/4}(\theta)
\le \theta^{1-\nu}\phi(\theta)
\le 1,
\end{equation}
hence
\begin{equation}\label{Bytheta}
\tau(\theta)=
\frac{2\pi-\arcsin v(\theta)}{\theta}, \qquad \theta \in (0,\Theta_0],
\end{equation}
is well-defined.
Observe that
$\tau$ is continuous on $(0,\Theta_0],$ and it satisfies
$\lim_{\theta\to 0+}\,\tau(\theta)=+\infty$. Therefore, there exists a sequence $(\theta_m)_{m \ge m_0}\subset (0,\Theta_0]$
such that
\begin{equation}\label{Admm}
\tau_m:=\tau(\theta_m)=m, \qquad m \ge m_0.
\end{equation}
We may assume that $\theta_{m+1}<\theta_m$. From
(\ref{Admm}) and (\ref{Bytheta}) it follows  that
$m \theta_m \le 2\pi, m \ge m_0.$
Moreover, the latter condition implies that
$m\theta_m\in (3\pi/2,2\pi]$ for  large $m$, so  without loss of generality, we may assume that
$
m\theta_m\in (3\pi/2,2\pi], m\ge m_0.
$

Define
a polynomial $Q_m$  by (\ref{DefAA}) with
$
d=d(\theta_m)$ and $\tau=\tau_m=m.
$
Then, employing  (\ref{Bytheta}) and (\ref{Admm}),
we obtain  that
\begin{equation}\label{impart}
(1+d(\theta_m)){\rm Im}\,Q_m(\theta_m)=
d(\theta_m)[v(\theta_m)+\sin(m\theta_m)]=0.
\end{equation}
Moreover, by (\ref{ReP}),
\begin{equation}\label{LiAB}
1-{\rm Re}\,Q_m(\theta_m)\le
\theta_m^{2-\epsilon(\theta_m)}+d(\theta_m)(1-\cos(m\theta_m)),
\end{equation}
and, in view of $m\theta_m\in (3\pi/2,2\pi], m \ge m_0,$  and  (\ref{Zv}),
\[
1-\cos(m\theta_m)\le
\sin^2(m\theta_m)
= v^2(\theta_m)
\le \theta_m^{2(1-\nu)}\phi^{3/2}(\theta_m).
\]
Thus, taking into account (\ref{LiAB}) and \eqref{impart},
\begin{align*}
|1-Q_m(\theta_m)|
\le \theta_m^{2-\epsilon(\theta_m)}+
\frac{\theta_m^\nu}{\phi^{1/2}(\theta_m)}
\theta_m^{2(1-\nu)}\phi^{3/2}(\theta_m)
=
2\varphi(\theta_m)\theta_m^{2-\nu}.
\end{align*}

Next, to estimate the distance between $P$ and $Q_m$ we note that
\[
\| P-Q_m\|_{\mathcal A}\le d(\theta_m)\frac{\| P-e^{i m\theta}\|_{\mathcal A}}{1+d(\theta_m)}
\le
2d(\theta_m)=
\frac{2\theta_m^\nu}{\varphi^{1/2}(\theta_m)},
\]
and then, by Remark \ref{Rem33},
\[
\| P-Q_m\|_{\mathcal A_\nu}\le m^\nu\|P-Q_m\|_{\mathcal A}
\le \frac{2(2\pi)^\nu}{\varphi^{1/2}(\theta_m)}.
\]

Finally,  (\ref{MoreP}), (\ref{withoutA}),
and (\ref{gwA}) imply that
\begin{align*}
\|Q_m'\|_\infty\le & \|P'\|_\infty + d(\theta_m) m\\
\le&
\theta_m^{-\alpha(\theta_m)}+
2\pi\frac{\theta_m^{\nu-1}}{\varphi^{1/2}(\theta_m)}
\\
=&\varphi^{1/4}(\theta_m)+2\pi\frac{\theta^{\nu-1}}{\varphi^{1/2}(\theta_m)}\\
\le& 8\frac{\theta_m^{\nu-1}}{\varphi^{1/2}(\theta_m)}.
\end{align*}
Therefore,
 if  $
|\theta_m-\theta|\le \varphi^{3/2}(\theta_m)\theta_m^{3-2\nu},
$
then
\begin{align*}
|1-Q_m(\theta)|
\le& 2\varphi(\theta_m)\theta_m^{2-\nu}+
8\frac{\theta_m^{\nu-1}}{\varphi^{1/2}(\theta_m)}
\varphi^{3/2}(\theta_m)\theta_m^{3-2\nu}\\
=&
10\varphi(\theta_m)\theta_m^{2-\nu}.
\end{align*}
\hfill \hfill \hfill \hfill \hfill \hfill \hfill \hfill$\Box$

\section{Acknowledgement}

The authors would like to thank J. Aaronson, who attracted the authors' attention to the problem posed by   Erd\H{o}s, de Bruijn, and Kingman.

\end{document}